\colorlet{darkishRed}{red!80!black}
\colorlet{darkishBlue}{blue!60!black}
\colorlet{darkishGreen}{green!60!black}
\renewcommand{\leq}{\leqslant}
\renewcommand{\geq}{\geqslant}
\renewcommand{\ge}{\geq}
\renewcommand{\le}{\leq}
\newcommand{\underring}[1]{\underaccent{\hbox to 0pt{\hss\normalfont\kern.1em \r{}\hss}}{#1}}
\DeclareMathOperator{\medcup}{\mathsmaller{\bigcup}}
\renewcommand{\subset}{\subseteq}
\renewcommand{\supset}{\supseteq}
\newcommand{\rest}{\restriction}
\newcommand{\Abs}[1]{\partial_{\Omega} {#1}}
\newcommand{ \N } { \mathbb{N} }
\newcommand{ \Z } { \mathbb{Z} }
\newcommand{\eps}{\varepsilon}
\def\downcl#1{\lceil{#1}\rceil}
\def\upcl#1{\lfloor{#1}\rfloor}
\def\calCommandfactory#1{%
   \expandafter\def\csname c#1\endcsname{\mathcal{#1}}}
\def\frakCommandfactory#1{%
   \expandafter\def\csname frak#1\endcsname{\mathfrak{#1}}}
\newcounter{ctr}
  \edef\X{\@Alph\c@ctr}
  \edef\Y{\@alph\c@ctr}
\renewcommand{\cC}{\mathscr{C}}
\newcommand{\tame}{uniform}
\newcommand{\ff}{sequentially faithful}
\newcommand{\Ff}{Sequentially faithful}
\newcommand{\pt}{partition tree}
\newcommand{\HR}{\cR}
\newcommand{\closure}[1]{\overline{#1}}
\newcommand{\Set}[1]{{\left\lbrace {#1} \right\rbrace}}
\def\set#1:#2{\Set{{#1} \colon {#2}}}
\newcommand{\dc}[1]{\lceil #1\rceil}
\newcommand{\uc}[1]{\lfloor #1\rfloor}
\newtheorem{theorem}{Theorem}[section]
\newtheorem{mainresult}{Theorem}
\newtheorem{claim}{Claim}
\newtheorem{proposition}[theorem]{Proposition}%[section]
\newtheorem{corollary}[theorem]{Corollary}
\newtheorem{lemma}[theorem]{Lemma}
\newtheorem{prob}[theorem]{Open problem}
\newenvironment{customthm}[1]
  {\innercustomthm}
  {\endinnercustomthm}
\theoremstyle{definition}
\newtheorem{definition}[theorem]{Definition}
\theoremstyle{remark}
\begin{document}
\title{\mbox{A representation theorem for end spaces of infinite graphs}}

%    author one information
\author{Jan Kurkofka}
\address{Universität Hamburg, Department of Mathematics, Bundesstraße 55 (Geomatikum), 20146 Hamburg, Germany}
\curraddr{University of Birmingham, School of Mathematics, Birmingham B15 2TT, UK}
\email{jan.kurkofka@uni-hamburg.de, j.kurkofka@bham.ac.uk}
%\thanks{}

%    author two information
\author{Max Pitz}
\address{Universität Hamburg, Department of Mathematics, Bundesstraße 55 (Geomatikum), 20146 Hamburg, Germany}
%\curraddr{}
\email{max.pitz@uni-hamburg.de}

\begin{abstract}
End-spaces of infinite graphs naturally generalise the Freudenthal boundary and sit at the interface between graph theory, geometric group theory and topology. 

Our main result is that every end-space can topologically be represented by a special order tree. 
Our main proof ingredient is a structure theorem that we introduce, which carves out the order-tree-like structure of any graph in such a way that there is a natural bijection between the ends of the graph and the limit-type down-closed chains of the order-tree.

%As an application, we show that Halin's end-faithful spanning tree conjecture can be amended using order-trees and graph-minors.
\end{abstract}

\keywords{infinite graph; end space; order tree; end-faithful spanning tree; normal tree}

\subjclass[2020]{05C83 (Primary); 05C63, 06A07 (Secondary)}

%\vspace*{-1cm}
\maketitle

%\vspace*{-1.5\baselineskip}

\section{Introduction}

 Intuitively, the ends of a graph capture the different directions in which the graph expands towards infinity. The simplest case is given by the class of (rooted) trees. Here, the ends correspond to the different rays starting at the root, and their end spaces are precisely the completely ultrametrizable spaces. 
 The concept of an end generalises naturally from trees to arbitrary graphs. Following Halin \cite{Halin_Enden64}, an end of a graph $G$ is an equivalence class of rays where two rays are equivalent if for every finite set $X$ of vertices they eventually belong to the same component of $G-X$. %For example, infinite complete graphs or grids have a single end, while the infinite binary tree has continuum many ends, one for every rooted ray. 
The set of ends of $G$ can be turned into the \emph{end-space} $\Omega(G)$ by equipping it with the following topology:
for every finite $X\subset V(G)$ and every component $C$ of $G-X$, one declares the set of ends represented by the rays in~$C$ to be basic open, and takes the topology generated by all these basic open sets.
For example, infinite cliques and the $\Z\times\Z$ grid have a single end, while the end-space of the infinite binary tree is homeomorphic to the Cantor set.

Ends play a crucial role in extending results about finite graphs to infinite graphs, see for example~\cite{aurichi2024edgeconnectivityedgeendsinfinitegraphs,CanonicalInfiniteToT,diestel2011locally,InfiniteFleischner,InfiniteKconChar,CountableLovaszCherkassky}.
Perhaps surprisingly, ends have recently been used for the converse as well: by studying the structure of finite graphs through their covering spaces~\cite{Local2Separators,carmesin2024stallingstypetheoremfinitegroups,GraphDec,CayleyCovers}.
In~Bass-Serre theory, Stallings' theorem uses ends to detect product-structure in groups such as amalgamated free products or HNN-extensions.

%Note that when considering end spaces, we may always assume that the underlying graph is connected, as adding one new vertex and choosing a neighbour for it in each component does not affect the end space.
%It turns out that end spaces of graphs are significantly more complex than end spaces of trees: For a typical example, start from an uncountable complete graph (which has a unique end) and glue onto each vertex a fresh ray. The resulting end space will be compact, yet no longer metrizable.

%However, end spaces of graphs are significantly more complex than end spaces of trees.
However, despite significant advances in our  understanding of end spaces \cite{diestel2006end,diestel2003graph,FirstSecondCountable,ApproximatingNormalTrees, polat1996ends,polat1996ends2,sprussel2008end}, we still have no complete picture of their precise topological characteristics.
Indeed, end spaces of graphs are significantly more complex than end spaces of trees.  
For example, let $G$ be obtained from an uncountable clique $K$ by disjointly adding new rays~$R_v$, one for each vertex~$v$ in $K$, and identifying the first vertex of each $R_v$ with~$v$.
Then $\Omega(G)$ is not first countable at the end that contains the rays in~$K$, and hence $\Omega(G)$ is not metrizable. Thus, end spaces of graphs cannot be represented by trees, and the following natural question arises: is there a narrow class of graphs just wide enough to represent as their end spaces all topological spaces that arise as end spaces of graphs? 

Our representation theorem offers an affirmative answer to this question. We recall the following concepts:
An \emph{order-tree} is a partial order $T=(T,\le)$ with a least element called the root such that all chains in $T$ are well-ordered by~$\le$. An order-tree $T$ is \emph{special} if $T$ can be partitioned into countably many antichains. A \emph{path} in $T$ is a down-closed chain. A path without maximal element is a \emph{high-ray}. By identifying paths with their characteristic function, the set $\cP(T)$ of all paths of $T$ is a closed, hence compact subspace of $\Set{0,1}^{T}$. The space $\cP(T)$ is the \emph{path space} associated with $T$. The \emph{ray space} $\cR(T)$ is the subspace of $\cP(T)$ consisting of all high-rays of $T$. 

Brochet and Diestel~\cite{brochet1994normal} formalised when a graph resembles a given order-tree:
A~\emph{$T$-graph} is a graph with vertex-set~$T$ in which edges run between comparable vertices only, so that each successor node is adjacent to its immediate predecessor, and so that limit nodes are adjacent to cofinally many of their predecessors.
See \cite{pitz_survey} for a survey.
Diestel and Leader defined that a $T$-graph $G$ is \emph{uniform} if for every limit-node $t\in T$ there are finitely many nodes $t_1<\cdots <t_n$ below $t$ such that every edge between a node $<t$ and a node $>t$ starts in a~$t_i$; see~\cite{DiestelLeaderNST}.

\begin{mainresult}[Representation theorem]
\label{thm_rep}
\label{thm:EndspaceTgraph}
The following are equivalent for a topological space $X$:
	\begin{enumerate}[label={\textnormal{(\arabic*)}}]
		\item $X$ is homeomorphic to the end space of a graph,
  \item $X$ is homeomorphic to the end space of a uniform $T$-graph on a (special) order tree $T$,
		\item $X$ is homeomorphic to the ray space of a special order tree $T$.
 	\end{enumerate}
\end{mainresult}

%Then (2) describes a narrow class of graphs that is just wide enough to represent all topological spaces that arise as end spaces of graphs. 
As $T$-graphs on special order trees do not contain uncountable clique minors, we conclude as an application of (2) that every end space can be represented by a graph without uncountable clique minor. See \cref{sec_8} for further applications.

%Item 
Assertion (3) has a different flavour, as it describes a topological space associated with an order tree to represent end spaces of graphs. By moving from graphs to order trees, we trade ends, i.e.\ fairly complicated equivalence classes of rays, for high-rays in $T$, obtaining a canonical representation system for the ends of the graph if you will. Diestel asked in 1992 for a topological characterisation of end-spaces of graphs~\cite[5.1]{diestel1992end}. The statement of (3) will be used by the second author as a key ingredient to give such a characterisation~\cite{pitz2023}.
%Using the perspective from (3) allows for unified explanations of virtually all topological results on end spaces in the literature. 
%As a case in point, we present in \cref{sec_8} a short derivation of Polat's result \cite{polat1996ends2}  that every end space admits a discrete expansion of length at most $\omega_1$. 
%Further topological applications can be found in~\cite{pitz2023}.

\medskip

We now describe some of the ingredients that go into the proof of \cref{thm_rep}. The implication $(3) \Rightarrow (2)$ follows from a canonical method due to Diestel and Leader \cite[Theorem~6.2]{DiestelLeaderNST} to build, from a special order tree $T$ with antichain partition $\set{U_n}:{n \in \N}$, a \emph{\tame} graph $G$ on $T$: Successors in $T$ are connected to their predecessors, and given a limit $t \in T$, recursively pick down-neighbours $t_0 < t_1 < t_2 < \cdots<  t$ with $t_i \in U_{n_i}$  so that  each $n_{i}$ is smallest  possible subject to $t_{i-1} < t_i < t$. See \cref{prop_specialtrees} below for a thorough discussion. While $G$ depends on the exact choice of $\set{U_n}:{n \in \N}$, we show that any two \tame\ graphs on the same order tree $T$ have end spaces naturally homeomorphic to each other and to $\cR(T)$. 
For the converse $(2) \Rightarrow (3)$, we additionally prove that the existence of a \tame\ graph on an order tree $T$ implies that $T$ must be special. 

The implication $(2) \Rightarrow (1)$ is trivial, and the main difficulty in the proof of \cref{thm:EndspaceTgraph} is the implication $(1) \Rightarrow (2)$, i.e.\ to construct for an arbitrary graph $G$ a uniform $T$-graph $H$ on a special order-tree~$T$ such that $\Omega(G)$ is homeomorphic to~$\Omega(H)$. Indeed, the majority of this paper is concerned with the proof of this implication.
For this, we prove a structure theorem for all connected graphs in terms of \emph{\pt s}, a new concept that we introduce in this paper and which is inspired by the work of Brochet and Diestel~\cite{brochet1994normal}.
Roughly speaking, a \pt\ is a substructure of a graph $G$ that takes the shape of an order-tree $T$ and combines key aspects of both algorithmic depth-first search trees and tree-decompositions from the theory of graph-minors.
Formally, a \emph{\pt } of $G$ is a pair $(T,\cV)$ of an order-tree $T=(T,\le)$ and a partition $\cV=(V_t : t\in T)$ of $V(G)$ into connected vertex-sets~$V_t$ such that
\begin{enumerate}
    \item the contraction-minor\footnote{Recall that $H$ is a \emph{contraction-minor} of $G$ if $V(G)$ can be partitioned into connected vertex sets $V_h$ for $h\in V(H)$ such that $hh'$ is an edge in $H$ if and only if $G$ contains an edge between $V_h$ and~$V_{h'}$.} $G/\cV$ is a $T$-graph;
    \item for every successor-node $t\in T$ the neighbourhood of $\bigcup_{t'\ge t}V_{t'}$ in $G$ is finite; and
    \item $|V_t|=1$ for all non-limits~$t\in T$.
\end{enumerate}
\noindent 
Roughly speaking, we say that $(T,\cV)$ \emph{displays} the ends of $G$ if a canonical correspondence between the ends of $G$ and the high-rays of~$T$ is bijective.
And we say that $(T,\cV)$ is \emph{\ff } if this canonical correspondence respects convergence of sequences.
The structure theorem states:

\begin{mainresult}\label{thm:PartitionTree}
Every connected $G$ has a \ff\ \pt\ that displays all its ends.
\end{mainresult}

Besides being a crucial step in proving \cref{thm_rep}, this structure theorem may prove useful in other situations, too. In \cref{sec_8} we present a variation on Halin's \emph{end-faithful spanning tree conjecture}, as well as a short derivation of Carmesin's result \cite[5.17]{carmesin2014all} that every graph admits a nested set of finite-order separations that distinguishes every two ends of the graph.

As an ingredient to the proof of \cref{thm:PartitionTree}, we introduce a new method called \emph{enveloping} an arbitrary vertex-set $U\subset V(G)$, which intuitively enables one to maximally expand $U$ without changing which ends can be separated from $U$ by finitely many vertices. 
%Due to its flexibility, we expect this 
This \emph{envelope method} has already found further applications since the completion of our paper, see e.g.\ \cite{ aurichi2024topologicalremarksendedgeend, koloschin2022end, pitz_shortCarmesin}.

\subsection*{Organisation of this paper} 
\cref{sec_2} recalls notation and facts on end spaces and $T$-graphs.
In \cref{sec_envelopes} we introduce the method of enveloping vertex-sets.
In \cref{sec_3} we discuss an alternative perspective on uniform $T$-graphs and establish an equivalence between uniformity and special order-trees. 
In \cref{sec_4} we give investigate end-spaces of uniform $T$-graphs in terms of~$T$, and show that they are naturally homeomorphic to the ray space $\cR(T)$, completing the proof of the equivalence $(2) \Leftrightarrow (3)$ in \cref{thm_rep}.
In \cref{sec_5} we introduce partition trees and prove core properties.
We prove the structure theorem, \cref{thm:PartitionTree}, in \cref{sec_6}.
In \cref{sec_7}, we derive from \cref{thm:PartitionTree} the remaining implication  $(1) \Rightarrow (2)$ in \cref{thm_rep}. 
Lastly, in \cref{sec_8} we have included some applications.

\newpage
\section{End spaces and \texorpdfstring{$T$}{T}-graphs: a reminder}
\label{sec_2}

\noindent For graph theoretic terms we follow the terminology in \cite{Bible}, and in particular \cite[Chapter~8]{Bible} for ends of graphs and the end spaces~$\Omega(G)$. A function $f \colon X \to Y$ is \emph{finite-to-one} if $f^{-1}(y)$ is finite for all $y \in Y$.

\subsection{End spaces}
\label{sec_endsdefns}
A $1$-way infinite path is called a \emph{ray} and the subrays of a ray are its \emph{tails}. Two rays in a graph $G = (V,E)$ are \emph{equivalent} if no finite set of vertices separates them; the corresponding equivalence classes of rays are the \emph{ends} of $G$. 
The set of ends of a graph $G$ is denoted by $\Omega = \Omega(G)$. 
Usually, ends of graphs are denoted by~$\omega$, but here we will denote ends by~$\eps$ or~$\eta$ to avoid confusion with ordinals such as~$\omega_1$.
If $X \subseteq V$ is finite and $\eps \in \Omega$ is an end, 
there is a unique component of $G-X$ that contains a tail of every ray in $\eps$, which we denote by $C(X,\eps)$. 
Then $\eps$ \emph{lives} in the component $C(X,\eps)$.
If $C$ is any component of $G-X$, we write $\Omega(X,C)$ for the set of ends $\eps$ of $G$ with $C(X,\eps) = C$, and abbreviate $\Omega(X,\eps) := \Omega(X,C(X,\eps))$. 
Finally, if $\cC$ is any collection of components of $G-X$, we write $\Omega(X,\cC) := \bigcup\,\{\,\Omega(X,C) \colon C \in \cC\,\}$.

The collection of all sets $\Omega(X,C)$ with $X\subset V$ finite and $C$ a component of $G-X$ forms a basis for a topology on $\Omega$.
This topology is Hausdorff, and it is \emph{zero-dimensional} in that it has a basis consisting of closed-and-open sets. 

A crucial property of end spaces is that they are \emph{Fréchet-Urysohn}: This means that closures are given by convergent sequence, i.e.
 $x \in \closure{X}$ for some subset $X \subset \Omega(G)$ if and only if there are $x_n \in X$ with $x_n \to x$ (as $n \to \infty$). In particular, this implies that a function $f$ between two end spaces is continuous if and only if $f$ is sequentially continuous; later in this paper, we will therefore only check for sequential continuity.
 
 Note that when considering end spaces $\Omega(G)$, we may always assume that $G$ is connected; adding one new vertex and choosing a neighbour for it in each component does not affect the end space.

Recall that a \emph{comb} is the union of a ray $R$ (the comb's \emph{spine}) with infinitely many disjoint finite paths, possibly trivial, that have precisely their first vertex on~$R$. 
The last vertices of those paths are the \emph{teeth} of this comb.
Given a vertex set $U$, a \emph{comb attached to} $U$ is a comb with all its teeth in $U$, and a \emph{star attached to} $U$ is a subdivided infinite star with all its leaves in $U$.
Then the set of teeth is the \emph{attachment set} of the comb, and the set of leaves is the \emph{attachment set} of the star.
\begin{lemma}[Star-comb lemma]\label{StarCombLemma}
Let $U$ be an infinite set of vertices in a connected graph $G$.
Then $G$ contains either a comb 
attached to~$U$ or a star attached to~$U$.
\end{lemma}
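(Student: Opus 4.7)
The plan is to reduce to a tree and then apply a branching-versus-depth dichotomy. Since $G$ is connected, for a chosen basepoint $u_0\in U$ and each $u\in U$ I fix a $u_0$--$u$ path $P_u$ in $G$; a spanning tree of $\bigcup_{u\in U}P_u$ furnishes a subtree $T\subseteq G$ with $U\subseteq V(T)$. Rooting $T$ at $u_0$, I pass to the subtree $T^*\subseteq T$ of all vertices whose descending subtree meets~$U$, so that every leaf of $T^*$ lies in $U$ and every internal vertex of $T^*$ still has a $U$-descendant.

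I now split on the branching of $T^*$. If some vertex $v\in T^*$ has infinitely many children $v_1,v_2,\dots$ in $T^*$, pick $u_i\in U\cap T_{v_i}$ for each $i$; the $v$--$u_i$ paths in $T$ meet only in $v$, so their union is a subdivided infinite star at $v$ with leaves in $U$, a star attached to~$U$. Otherwise every vertex of $T^*$ has only finitely many children in $T^*$, so $T^*$ is infinite and downward locally finite; K\"onig's lemma then supplies a ray $R=v_0v_1v_2\dots$ in $T^*$ starting at $u_0$, which will serve as the spine of the comb.

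The remaining and, I expect, principal obstacle is harvesting infinitely many disjoint teeth on $R$. For each index $i$ I try to produce a tooth: if $v_i\in U$ the trivial finger at $v_i$ already works, and otherwise any child $w\neq v_{i+1}$ of $v_i$ in $T^*$ yields a non-trivial finger from $v_i$ into $T_w$ ending at some $u\in U\cap T_w$. Such fingers live in the pairwise disjoint pieces $T_{v_i}\setminus T_{v_{i+1}}$ apart from their starting vertex, hence are automatically pairwise disjoint and meet $R$ only at that first vertex. The scenario to rule out is that from some index $N$ on every $v_i$ is a non-$U$ vertex with $v_{i+1}$ as its only child in $T^*$; a short induction then forces every $u\in U\cap T_{v_N}$ to lie in $T_{v_i}$ for all $i\geq N$, contradicting the finite depth of $u$ in $T$. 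Hence infinitely many teeth can be extracted, and $R$ together with these fingers is the desired comb attached to~$U$.
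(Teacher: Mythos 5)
Your proof is correct. The paper does not prove this lemma at all — it is quoted as a standard result (Lemma~8.2.5-adjacent material in Diestel's book, the paper's reference [2]) — and your argument is essentially the standard textbook one: reduce to a subtree containing $U$ cofinally, then dichotomize on infinite branching (star) versus local finiteness plus K\"onig's lemma (comb), with the tail-of-$R$ degeneration correctly ruled out by the finite depth of any $U$-descendant of $v_N$.
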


Let us say that an end $\eps$ of $G$ is contained \emph{in the closure} of $M$, where $M$ is either  a subgraph of $G$ or a set of vertices of $G$, if for every finite vertex set $X\subset V$ the component $C(X,\eps)$ meets $M$.
Equivalently, $\eps$~lies in the closure of $M$ if and only if $G$ contains a comb attached to $M$ with its spine in~$\eps$.
We write $\Abs{M}$ 
for the subset of $\Omega$ that consists of the ends of $G$ lying in the closure of $M$.

\subsection{Normal trees}

Given a graph $H$, we call a path $P$ an \emph{$H$-path} if $P$ is non-trivial and meets $H$ exactly in its endvertices. 
In particular, the edge of any $H$-path of length~$1$
is never an edge of~$H$.
A rooted tree $T\subset G$ is \emph{normal} in $G$ if the endvertices of every $T$-path in~$G$ are comparable in the tree-order of $T$, cf.~\cite{Bible}.
The rays of a normal tree~$T$ that start at its root are its \emph{normal rays}.

Since their invention due to Jung~\cite{jung1968wurzelbaume},
normal trees have developed to be perhaps the single most useful structural tool in infinite graph theory. Positive results include a characterisation due to Jung when normal spanning trees exist \cite{jung1969wurzelbaume}, Halin's result that graphs without a (fat) subdivision of a countable clique have normal spanning trees  \cite{diestel2016simple,halin1978simplicial,pitz2021quickly,pitz2020unified} as well as results about forbidden minors for normal spanning trees
\cite{BowlerGeschkePitzNST,DiestelLeaderNST,halin2000miscellaneous,pitz2020d,pitz2021new}. For applications of normal trees that are not necessarily spanning, see~\cite{StarComb1StarsAndCombs,ApproximatingNormalTrees}.

\begin{lemma}\label{lem:maxNT}
Let $G$ be any connected graph and let $v\in G$ be any vertex.
Then there exists an inclusionwise maximal normal tree $T\subset G$ rooted at~$v$.
Every component of $G-T$ has infinite neighbourhood, and the down-closures in~$T$ of each such neighbourhood determines a normal ray of~$T$.
\end{lemma}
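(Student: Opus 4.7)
For existence, the plan is to apply Zorn's lemma to the poset of normal trees $T \subset G$ rooted at $v$, ordered by inclusion. The key verification is that the union of an increasing chain of such trees is again a normal tree rooted at $v$: the union is connected and acyclic, and any $T$-path in $G$, being finite, already lies entirely in some member of the chain, yielding comparable endvertices there and hence in $T$. A maximal normal tree then exists.

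Now fix any component $C$ of $G-T$. I would first establish the standard observation that $N(C)$ is a chain in~$T$: were $u, u' \in N(C)$ incomparable, then a $u$-$u'$ path through $C$ (which exists since $C$ is connected and both $u,u'$ lie in its neighbourhood) would be a $T$-path with incomparable endvertices, contradicting normality of~$T$.

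The main step, and the point I expect to be the principal obstacle, is to show that $N(C)$ is infinite; this is where maximality of $T$ must come into play. Suppose for contradiction that $N(C)$ were finite; as a finite chain it has a maximum~$t$, and I pick some neighbour $c \in C$ of $t$, which exists since $t \in N(C)$. Set $T_0 := T + tc$, rooted at $v$ with $c$ placed directly above $t$ in the tree-order. The claim is that $T_0$ is again normal in $G$, contradicting maximality. To verify this, I would case-split on a $T_0$-path $P$ in $G$: since the interior of $P$ is disjoint from $T$, that interior lies entirely in a single component $D$ of $G-T$, because distinct components of $G-T$ cannot be joined without re-entering~$T$. If both endvertices of $P$ lie in $T$, then $P$ is itself a $T$-path and normality of $T$ applies directly. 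Otherwise $c$ is one endvertex of $P$, which forces $D = C$, and the other endvertex $y \in T$ is therefore adjacent to $C$, so $y \in N(C)$, giving $y \leq t < c$ in~$T_0$.

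Finally, since every down-set $\{y \in T : y \leq x\}$ in $T$ is finite (being the unique $v$-$x$ path in $T$), an infinite chain in $T$ is order-isomorphic to~$\omega$. Writing $N(C)$ as $t_0 < t_1 < t_2 < \cdots$, its down-closure $\bigcup_n \{y \in T : y \leq t_n\}$ is a connected chain in $T$ containing $v$ and of unbounded height, hence a ray starting at the root, i.e., a normal ray of~$T$, as required.
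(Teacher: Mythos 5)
Your proof is correct and follows essentially the same route as the paper's: Zorn's lemma for existence, the chain property of $N(C)$ from normality, and the contradiction obtained by attaching a $t$--$C$ edge above the maximal neighbour $t$ of a putatively finite neighbourhood; you merely spell out the verification that $T+tc$ is again normal, which the paper leaves implicit. (One cosmetic slip: in the Zorn step a $T$-path does not ``lie entirely in some member of the chain''---only its two endvertices do, its interior avoiding $T$ altogether---but that is exactly what your argument actually uses.)
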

\begin{proof}
Inclusionwise maximal normal trees $T\subset G$ rooted at~$v$ exist by Zorn's lemma. By the defining property of a normal tree, the neighbourhood of every component of $G-V(T)$ forms a chain in the tree-order on $T$. We claim that all such neighbourhoods are in fact infinite, thus determining a unique normal ray of~$T$. Indeed, suppose for a contradiction that some component $C$ of $G-V(T)$ has finite neighbourhood. Let $t$ be maximal in the tree-order on $T$ amongst all neighbours of $C$. Then we can extend $T$ to a larger normal tree rooted in $v$ by adding any $t$--$C$ edge, a contradiction.
\end{proof}

\subsection{\texorpdfstring{$\boldsymbol{T}$}{T}-graphs}
A partially ordered set $(T,\le)$ is called an \emph{order tree} if it has a unique minimal element (called the \emph{root}) and all subsets of the form $\lceil t \rceil = \lceil t \rceil_T := \set{t' \in T}:{t'\le t}$
are well-ordered. Write  $\lfloor t \rfloor = \lfloor t \rfloor_T  := \set{t' \in T}:{t\le t'}$.
We abbreviate $\mathring{\dc{t}}:=\dc{t}\setminus\{t\}$ and $\underring{\uc{t}}:=\uc{t}\setminus\{t\}$.
For subsets $X\subset T$ we abbreviate $\dc{X}:=\bigcup_{t\in X}\dc{t}$ and $\uc{X}:=\bigcup_{t\in X}\uc{t}$.

A~maximal chain in~$T$ is called a \emph{branch}
of~$T$; note that every branch inherits a well-ordering from~$T$. 
The \emph{height} of~$T$ is the supremum of the order types of its branches. 
The \emph{height} of a point $t\in T$ is the order type of~$\mathring{\dc{t}}$.
%:= \lceil t \rceil  \setminus \{t\}$. 
The set $T^i$ of all points at height $i$ is
the $i$th \emph{level} of~$T$, and we
write $T^{<i} := \bigcup\set{T^j}:{j < i}$ as well as $T^{\leq i} := \bigcup\set{T^j}:{j \leq i}$.
If $t < t'$, we use the usual interval notation such as  $(t,t') = \{s \colon t< s  < t'\}$ for nodes between  $t$ and $t'$.
If there is no point between $t$ and~$t'$, we call $t'$ a \emph{successor}
of~$t$ and $t$ the \emph{predecessor} of~$t'$; if $t$ is not a successor
of any point it is called a \emph{limit}.
An order tree is \emph{special} if it can be partitioned into countably many antichains, and \emph{semi-special} if its successor nodes can be partitioned into countably many antichains.

A \emph{top} of a down-closed chain $\cC$ in an order tree $T$ is a limit $t\in T\setminus\cC$ with $\mathring{\dc{t}}=\cC$.
Note that a chain $\cC\subset T$ may have multiple tops.

An order tree $T$ is \emph{normal} in a graph $G$, if $V(G) = T$
and the two endvertices of any edge of $G$ are comparable in~$T$. We call $G$ a
\emph{$T$-graph} if $T$ is normal in $G$ and the set of lower neighbours of
any point $t$ is cofinal in $\mathring{\dc{t}}$. 
We mention the following standard results about $T$-graphs, and refer the reader to \cite[\S2]{brochet1994normal} for details.

\begin{lemma}
\label{lem_Tgraphproperties}
Let $(T,\leq)$ be an order tree and $G$ a $T$-graph.
\begin{enumerate}
\item \label{itemT1} For incomparable vertices $t,t'$ in $T$, the set $\downcl{t} \cap \downcl{t'}$ separates $t$ from $t'$ in $G$.
\item \label{itemT2} Every connected subgraph of $G$ has a unique $T$-minimal element.
\item \label{itemT3} If $T' \subset T$ is down-closed, the components of $G - T'$ are spanned by $\upcl{t}$ for $t$ minimal in $T\setminus T'$.
\item \label{itemT4} For any two vertices $t\le t'$ in~$T$, the interval $[t,t']$ is connected in~$G$.
\end{enumerate}
\end{lemma}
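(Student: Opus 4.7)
The plan is to prove the four items in the order (iv), (ii), (i), (iii), since each builds naturally on the previous ones. For (iv), I would run a transfinite induction on the height of $t'$, showing for every $t \leq t'$ that $[t,t']$ is connected. If $t'$ is a successor with predecessor $p$, then cofinality of the lower neighbours of $t'$ in the chain $\mathring{\dc{t'}} = \dc{p}$ forces $p$ itself (the maximum of this chain) to be a lower neighbour, so $[t,t'] = [t,p] \cup \{t'\}$ and the induction hypothesis on $[t,p]$ finishes it. If $t'$ is a limit, then for each $u \in [t,t')$ cofinality of the lower neighbourhood of $t'$ in $\mathring{\dc{t'}}$ produces a lower neighbour $s$ of $t'$ with $u \leq s < t'$; by induction $[u,s]$ is connected, and appending the edge $st'$ connects $u$ to $t'$ inside $[u,t'] \subset [t,t']$.

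For (ii), given a connected subgraph $H$, observe that $\dc{v} \cap V(H)$ is a non-empty subset of the well-ordered chain $\dc{v}$ and hence has a minimum $m_v$ for each $v \in V(H)$. I would then verify that $m_u = m_v$ whenever $uv \in E(H)$: writing WLOG $u < v$, the inclusion $m_u \in \dc{u} \subset \dc{v}$ places $m_u \in \dc{v} \cap V(H)$ and gives $m_v \leq m_u$; on the other hand, $m_v$ and $u$ are comparable inside the chain $\dc{v}$, and $u < m_v$ would contradict minimality of $m_v$, so $m_v \leq u$ and hence $m_u \leq m_v$. Connectedness of $H$ now forces all the $m_v$ to coincide in a single element $m \in V(H)$, which is then the unique $T$-minimum of $H$. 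Item (i) follows at once: any $t$--$t'$ path $P$ in $G - (\dc{t} \cap \dc{t'})$ would by (ii) possess a $T$-minimum bounded by both $t$ and $t'$, forcing it into $\dc{t} \cap \dc{t'}$, contrary to $V(P)$ being disjoint from this set.

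For (iii), each $\uc{t}$ with $t$ minimal in $T \setminus T'$ induces a connected subgraph by (iv), since every $t' \geq t$ is joined to $t$ through the connected interval $[t,t']$. Distinct minimal nodes $t, t''$ are incomparable, so by (i) they are separated in $G$ by $\dc{t} \cap \dc{t''}$; minimality of $t, t''$ together with $T'$ being down-closed forces this separator to sit inside $T'$, so $\uc{t}$ and $\uc{t''}$ end up in different components of $G - T'$. Conversely, every $v \in T \setminus T'$ lies in some $\uc{m}$ with $m$ minimal in $T \setminus T'$: take $m$ to be the minimum of $\dc{v} \cap (T \setminus T')$, which is well-defined as a non-empty subset of the well-ordered $\dc{v}$, and is itself minimal in $T \setminus T'$ precisely because $T'$ is down-closed. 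The delicate point throughout is the limit case of (iv), where one must carefully extract from the cofinality of the lower neighbourhood of $t'$ inside $\mathring{\dc{t'}}$ a neighbour $s$ of $t'$ sitting above any prescribed $u < t'$; everything else reduces to standard bookkeeping on well-ordered down-sets in a normal tree order.
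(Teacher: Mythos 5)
Your proof is correct; the paper does not prove this lemma itself but defers to Brochet and Diestel as a standard fact, and your arguments --- transfinite induction on the height of $t'$ for (iv), using that the predecessor is the maximum of $\mathring{\dc{t'}}$ in the successor case, taking the minimum of $\dc{v}\cap V(H)$ in the well-order for (ii), and deducing (i) and (iii) from these --- are precisely the standard ones. One tiny misattribution in (iii): the minimality of $m=\min\bigl(\dc{v}\cap(T\setminus T')\bigr)$ in $T\setminus T'$ needs no down-closedness of $T'$ (any $s<m$ in $T\setminus T'$ already lies in $\dc{v}$ and contradicts the choice of $m$); down-closedness is instead what guarantees $\uc{m}\subset T\setminus T'$ and that the separator $\dc{t}\cap\dc{t''}$ is contained in $T'$.
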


\section{Envelopes and concentrated sets of vertices}
\label{sec_envelopes}

An \emph{adhesion set} of a set of vertices or a subgraph $U \subset G$ is any subset of the form $N(C)$ for a component $C$ of $G-U$. The set or subgraph $U$ is said to have \emph{finite adhesion} in $G$ if all its adhesion sets are finite.

Let $G$ be a connected graph. An \emph{envelope} for a set of vertices $U \subset V(G)$ is a set of vertices $U^* \supseteq U$ of finite adhesion such that $\Abs{ U^*} = \Abs{U}$. We will show in \cref{thm:envelope} below that envelopes exist.

We need a preliminary lemma. 
A set of vertices $U$ is \emph{concentrated} (in its boundary $\Abs{U}$) if for every finite vertex set $X\subset V$ only finitely many vertices of~$U$ lie outside of $\bigcup_{\eps \in \Abs{U}} C(X,\eps)$. 
We say that a set of vertices $U$ is concentrated in an end $\eps$ to mean the case $\Abs{U} = \Set{\eps}$.

Recall that a subset $X$ of a poset $P=(P,{\le})$ is \emph{cofinal} in $P$, and ${\le}$, if for every $p\in P$ there is an $x\in X$ with $x\ge p$.
We say that a rooted tree $T\subset G$ contains a set $U$ \emph{cofinally} if $U\subset V(T)$ and $U$ is cofinal in the tree-order of~$T$. Note that such trees are easily constructed: any inclusion-minimal subtree $T$ of $G$ with $U \subset V(T)$ will contain $U$ cofinally---no matter which vertex of~$T$ we choose as root.

\begin{lemma}\label{lem:cofinalConcentrated}
Let $G$ be any graph, let $U\subset V(G)$ be a set of vertices, and suppose $T\subset G$ is a rooted tree that contains $U$ cofinally. Then the following assertions hold:
\begin{enumerate}%[(i)]
    \item $\Abs{T} = \Abs{U}$.
    \item If $U$ has finite adhesion, then so does~$T$.
    \item If $U$ is concentrated, then so is~$T$.
\end{enumerate}
\end{lemma}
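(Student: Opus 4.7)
My plan rests on two structural observations: first, since $T\subset G$ is a graph-theoretic rooted tree, every down-closure $\dc{u}_T$ is the vertex set of the unique root-to-$u$ path in $T$ and is therefore \emph{finite}; second, for any $u\geq_T t$ the tree-path $[t,u]_T$ sits inside $\uc{t}_T$ and is a path in $G$. Combined with the cofinality of $U$ in $T$, these facts let me ``promote'' properties of $U$ to properties of $V(T)$ via short walks up the tree. All three assertions will follow from essentially the same maneuver.

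For part~(i), the inclusion $\Abs{U}\subseteq\Abs{T}$ is immediate from $U\subseteq V(T)$. For the converse, fix $\eps\in\Abs{T}$ and finite $X\subset V(G)$. First I check that $T\cap C(X,\eps)$ is infinite: if it were a finite set $F$, then enlarging $X$ to $X\cup F$ would separate $\eps$ from $T$, contradicting $\eps\in\Abs{T}$. The set $\dc{X\cap T}_T$ is a finite union of finite chains, so I can pick $t\in T\cap C(X,\eps)\setminus\dc{X\cap T}_T$; by this choice $\uc{t}_T\cap X=\emptyset$. Cofinality then supplies $u\in U$ with $u\geq_T t$, and the tree-path $[t,u]_T\subseteq\uc{t}_T$ avoids $X$, certifying $u\in C(X,\eps)\cap U$.

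For part~(ii), let $D$ be a component of $G-V(T)$ and let $D'$ be the component of $G-U$ containing $D$; the hypothesis gives $N(D')$ finite. Split $N(D)=(N(D)\cap U)\sqcup(N(D)\setminus U)$: the first piece is contained in $N(D')$, and any $y$ in the second piece lies in $V(T)\setminus U$ and is $G$-adjacent to $D\subseteq D'$, forcing $y\in D'$. Hence it suffices to show $D'\cap V(T)$ is finite. For $s\in D'\cap V(T)$, cofinality yields $u\in U$ with $u\geq_T s$; walk along $[s,u]_T$ and let $u_s$ be the first vertex on this path that lies in $U$. A short induction shows that all vertices strictly before $u_s$ remain in $D'$ (consecutive vertices are $G$-adjacent, and if neither lies in $U$ they belong to the same component of $G-U$). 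Thus $u_s$ is $T$-adjacent to a vertex of $D'$ and so $u_s\in N(D')$. Consequently $D'\cap V(T)\subseteq\bigcup_{v\in N(D')}\dc{v}_T$, a finite union of finite sets.

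Part~(iii) reuses the same idea. Set $W:=\bigcup_{\eps\in\Abs{U}}C(X,\eps)$, which by part~(i) also equals $\bigcup_{\eps\in\Abs{T}}C(X,\eps)$. The hypothesis asserts $U\setminus W$ is finite; I want $V(T)\setminus W$ finite. Given $v\in V(T)\setminus(W\cup X)$, cofinality yields $u\in U$ with $u\geq_T v$. If $u\in W$, then $v$ and $u$ lie in different components of $G-X$, so the tree-path $[v,u]_T$ must cross $X$ and hence $v\leq_T x$ for some $x\in X\cap T$; otherwise $u\in U\setminus W$ and $v\in\dc{u}_T$. Therefore $V(T)\setminus W\subseteq(V(T)\cap X)\cup\dc{X\cap T}_T\cup\bigcup_{u\in U\setminus W}\dc{u}_T$, a finite set. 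I expect the only delicate point to be the induction in part~(ii), where one must carefully verify that the tree-walk from $D'\cap V(T)$ up to $U$ remains inside $D'$ until it enters $N(D')$.
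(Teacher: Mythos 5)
Your proof is correct and rests on the same engine as the paper's: down-closures in a rooted tree are finite root-paths, so cofinality of $U$ lets one walk from any tree vertex outside $\dc{X\cap T}_T$ up to a $U$-vertex along a path avoiding $X$. The paper merely packages this once as a single contrapositive claim (if $V(T)$ meets a union of components of $G-X$ infinitely, then so does $U$) and derives (i)--(iii) from it, whereas you run the same maneuver three times in parallel; the arguments are otherwise identical.
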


\begin{proof}
Assertions (i) and (ii) already have been observed in \cite[Lemma~2.13]{StarComb1StarsAndCombs} and \cite[Lemma~4.3]{pitz2020d}. For convenience of the reader, we give a self-contained argument for all three properties (i)--(iii) at once. For this, suppose $T\subset G$ is a rooted tree that contains $U$ cofinally.

\begin{claim}
\label{claim_cofinally}
If $X$ is a finite set of vertices and $\cC$ a collection of components of $G-X$ such that $V(T)$ meets $\bigcup \cC$ infinitely, then so already does $U$.
\end{claim}

To see the claim, suppose for a contradiction that for some finite vertex set $X$ and some collection $\cC$ of components of $G-X$ we have $\bigcup \cC$ meets $U$ meets finitely, but $V(T)$ infinitely. By increasing $X$, we may assume that $\bigcup \cC \cap U = \emptyset$. Pick $t \in (T \setminus \downcl{X}) \cap \bigcup \cC$. Then $\upcl{t} \subseteq \bigcup \cC$ avoids $U$, a contradiction to the assumption that $U$ is cofinal in $T$.

Now to see (i), consider any end $\eps \notin \Abs{U}$. Then there is a finite set of vertices $X$ such that $C(X,\eps)$ avoids $U$. 
By \cref{claim_cofinally}, also $V(T)$ intersects $C(X,\eps)$ finitely, witnessing $\eps \notin \Abs{T}$. The argument for (iii) is similar. Finally, for (ii), consider a component $C'$ of $G-T$. Then there is a component $C$ of $G-U$ with $C' \subset C$. Assuming that $U$ has finite adhesion, $X=N(C)$ is finite.  \cref{claim_cofinally} applied to $X$ and $C$ yields that $V(T)$ intersects $C$ finitely. Then $N(C') \subset X \cup (V(T) \cap V(C))$ is finite, so $T$ has finite adhesion.
\end{proof}

\begin{theorem}
\label{thm:envelope}
Any \textnormal{[}concentrated\textnormal{]} set $U$ of vertices in a connected graph $G$ has a connected \textnormal{[}concentrated\textnormal{]} envelope $U^*$. Moreover, $U^*$ can be chosen such that $U^* \cap C$ is connected for every component $C$ of $G-U$.
\end{theorem}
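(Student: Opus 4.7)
The plan is to define
$$V^* := \{ v \in V(G) : \text{for every finite } X \subseteq V(G)\setminus\{v\}, \text{ the component of } G-X \text{ containing } v \text{ meets } U, \text{ contains an end of } \partial_\Omega U, \text{ or is finite} \},$$
the set of vertices ``topologically inseparable'' from $U \cup \partial_\Omega U$ in $G$. I would then build the envelope $U^*$ from $V^*$ by a component-wise application of Lemma~\ref{lem:cofinalConcentrated}: for each component $C$ of $G-U$, fix an inclusion-minimal subtree $T_C \subseteq G[V(C)]$ containing $V^* \cap V(C)$; rooted arbitrarily, $V^* \cap V(C)$ becomes cofinal in $T_C$. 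Setting $U^* := U \cup \bigcup_C V(T_C)$, parts (i)--(iii) of Lemma~\ref{lem:cofinalConcentrated} transfer end-closure, finite adhesion, and (when applicable) concentration from $V^*$ to $U^*$. Each $U^* \cap V(C) = V(T_C)$ is connected by construction, handling the moreover clause; if $U$ itself is disconnected, one additionally absorbs minimal connecting paths into the $T_C$'s.

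The first substantive step is to verify $\partial_\Omega V^* = \partial_\Omega U$. The inclusion $\partial_\Omega U \subseteq \partial_\Omega V^*$ is trivial. For the converse, assume $\eps \in \partial_\Omega V^* \setminus \partial_\Omega U$; fix a finite $X_0$ with $C(X_0,\eps) \cap U = \emptyset$, and pick a witness $v \in C(X_0,\eps) \cap V^*$. Since $C(X_0,\eps)$ is infinite, the defining property of $V^*$ applied to $v$ with separator $X_0$ yields some end $\eta_0 \in \partial_\Omega U$ living in $C(X_0,\eps)$. Iterating by successively separating $\eps$ from $\eta_0, \eta_1, \ldots$, one obtains a sequence $(\eta_n) \subseteq \partial_\Omega U$ converging to $\eps$ in $\Omega(G)$. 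A short check shows $\partial_\Omega U$ is sequentially closed in the Fréchet-Urysohn space $\Omega(G)$, forcing $\eps \in \partial_\Omega U$---a contradiction.

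The main obstacle I anticipate is verifying that $V^*$ has finite adhesion. Suppose, for contradiction, that a component $D$ of $G-V^*$ has $|N(D)|=\infty$; for each $v \in V(D)$ fix a witnessing $X_v$ and let $C_v$ be the component of $G-X_v$ containing $v$. A short argument shows $C_v \subseteq D$ and $N_G(C_v) \subseteq X_v$ is finite: any $w \in C_v \cap V^*$ would, by applying the $V^*$-defining property to $w$ with separator $X_v$, force $C_v$ to meet $U$, contain an end of $\partial_\Omega U$, or be finite---contradicting the choice of $X_v$. Now apply the star-comb lemma to $N(D)$ inside $G$. In the star case, if the centre $z$ lies in some $V(D')$ for $D'$ a component of $G-V^*$, all but finitely many of the star's internally disjoint paths avoid $X_z$, placing infinitely many $N(D)$-tooth vertices (all in $V^*$) into $C_z \subseteq D' \subseteq V(G)\setminus V^*$, a contradiction. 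The remaining sub-cases (star centred in $V^*$, and the comb case) are handled by similar analysis: one extracts from the star-comb structure a ray representing an end $\eps \in \partial_\Omega N(D) \subseteq \partial_\Omega V^* = \partial_\Omega U$ that must enter $V(D)$, then derives a contradiction using the finite-neighborhood property of the relevant $C_v$.
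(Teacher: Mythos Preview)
Your set $V^*$ need not have finite adhesion, so the core of the argument breaks down. Consider the following graph $G$: take two rays $U = u_0 u_1 u_2 \cdots$ and $R = r_0 r_1 r_2 \cdots$, add all rung edges $u_i r_i$ to form a ladder, and finally attach at each $r_i$ a fresh ray $S_i = r_i\, s_{i,1}\, s_{i,2} \cdots$. The only end in $\partial_\Omega U$ is the common ladder end $\eps = \eps_U = \eps_R$; each $\eps_{S_i}$ is separated from $U$ by $\{r_i\}$ and hence lies outside $\partial_\Omega U$. For every $i$ the finite set $X_i = \{u_i, r_{i-1}, r_{i+1}\}$ (read as $\{u_0, r_1\}$ when $i=0$) witnesses $r_i \notin V^*$: the component of $G - X_i$ containing $r_i$ is precisely $S_i$, which is infinite, disjoint from $U$, and hosts only the end $\eps_{S_i} \notin \partial_\Omega U$. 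Similarly each $s_{i,j} \notin V^*$. Hence $V^* = U$, and $G - V^*$ is a single component $D$ (spanned by $R$ together with all the $S_i$) with $N(D) = U$ infinite. Since moreover $V^* \cap V(C) = \emptyset$ for the unique component $C$ of $G-U$, your $U^*$ collapses to $U$ and is not an envelope.

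Your sketch for the ``remaining sub-cases'' of the finite-adhesion argument is exactly where this example bites. Applying the routine star--comb manoeuvre inside $D$ does produce a comb with spine $R \subset D$ and teeth in $N(D)$, and its end $\eps$ does lie in $\partial_\Omega U$; but no contradiction follows from ``the finite-neighbourhood property of the relevant $C_v$'': for each $r_i$ on $R$ the witnessing component $C_{r_i} = S_i$ sits off to the side, and neither the spine $R$ nor the end $\eps$ ever enters it. The underlying problem is that $V^*$ is a pure closure-type operation and may discard too much---here it throws away all of $R$, even though $R$ carries the very end one needs to stay close to. The paper instead \emph{adds} material to $U$: it adjoins the spines of a maximal disjoint family of combs attached to $U$ (together with all star centres), so that in the example the ray $R$ gets absorbed. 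Finite adhesion then follows because any further comb or star attached to the enlarged set could be rerouted to $U$, contradicting maximality of the chosen family.
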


\begin{proof}
Let $U$ be a given set of vertices in a connected graph $G$.
By Zorn's lemma, there is an inclusionwise maximal set of combs attached to $U$ with pairwise disjoint spines. Write $\cR$ for this collection of spines. Let $S$ be the set of all centres of (infinite) stars attached to $U$. We will show that 
$$U^* := U \cup \bigcup_{R \in \cR} V(R) \cup S$$
is an envelope for $U$. 
The verification relies on the following claim: 
\begin{claim}
\label{claim_1}
If $X$ is a finite set of vertices and $\cC$ a collection of components of $G-X$ such that $U^*$ meets $\bigcup \cC$ infinitely, then so already does $U$.
\end{claim}

To see the claim, consider some finite set of vertices $X$, and assume that $\cC$ is a collection of components of $G-X$ such that $U$ meets $\bigcup \cC$ finitely. Then $S$ avoids $\bigcup \cC$. 
Consider next a spine $R \in \cR$ that meets $\bigcup \cC$. Since $R$ eventually lies in a component $C \notin \cC$, we have that $R$ meets $\bigcup \cC$ finitely. Moreover, $R$ also meets $X$, and since the spines in~$\cR$ are pairwise disjoint, there are at most $|X|$ spines from $\cR$ that meet $\bigcup\cC$, and each does so finitely. Hence $\bigcup_{R \in \cR} V(R)$ meets $\bigcup \cC$ finitely, too, and the claim follows.

Now to see $\Abs{U^*} = \Abs{U}$, consider any end $\eps \notin \Abs{U}$. Then there is a finite set of vertices $X$ such that $C(X,\eps)$ avoids $U$. 
By \cref{claim_1}, also $U^*$ intersects $C(X,\eps)$ finitely, witnessing $\eps \notin \Abs{U^*}$. The argument that if $U$ is concentrated, then so is $U^*$, follows similarly from \cref{claim_1}.

To see that $U^*$ has finite adhesion, suppose for a contradiction that there is a component $C$ of $G-U^*$ with infinite neighbourhood.
Then by a routine application of \cref{StarCombLemma}, we either find a star or a comb attached to~$U^*$ whose centre $v$ or spine $R$ is contained in~$C$. Then for all finite sets of vertices $X$ (chosen disjoint from $v$ in the star case), the centre $v$ or a tail of $R$ lives in a component of $G-X$ that meets $U^*$ infinitely, and hence also $U$ by \cref{claim_1}. But then it is straightforward to inductively construct a star with centre $v$ or a comb with spine $R$ attached to $U$, violating the choice of $S$ or $\cR$ respectively.

To get a connected envelope for $U$ that also satisfies the moreover-part, note that there are inclusionwise minimal subtrees of $G$ containing $U^*$ which satisfy that $T \cap C$ is connected for every component $C$ of $G-U$: 
Take a spanning tree $T_C$ for every component $C$ of $G-U$, and extend the forest $\bigcup_C T_C$ to a rooted spanning tree $T$ of $G$. Then the down-closure of $U^*$ in $T$ can serve as the desired envelope by \cref{lem:cofinalConcentrated}. 
\end{proof}

For a concept that is somewhat stronger than our notion of envelopes (but whose existence proof is significantly more involved) see Polat's \emph{multiendings} from \cite{polat1996ends2}; a precursor to our notion of envelopes has been used in B\"urger and the first author's proof of \cite[Theorem~1]{StarComb2TheDominatedComb}.

 Recall that a sequence $(x_n)_{n \in \N}$ in a topological space $X$ is said to \emph{converge} to a subset $A \subset X$, denoted by $x_n \to A$ (as $n \to \infty$) if every open set containing $A$ contains also almost all $x_n$. Note that if $A= \Set{x}$ is a singleton, this reduces to the usual notion of convergence $x_n \to x$.
 
We will apply the following lemma for a single end only. However, for future reference we state it in its optimal form for a compact collection of ends.

\begin{lemma}
\label{lem_concentratedconvergence}
Let $U$ be a concentrated vertex set of $G$ of finite adhesion. 
Assume further that $\Abs{U}$ is compact.
Let $(\eps_n)_{n\in\N}$ be any sequence of ends in $\Omega(G) \setminus \Abs{U}$, and write $D_n$ for the unique component of $G-U$ in which $\eps_n$ lives.

Then $\eps_n \to \Abs{U}$ (as $n \to \infty$) 
if and only if the map $\N\ni n\mapsto N_G(D_n)$ is finite-to-one.
\end{lemma}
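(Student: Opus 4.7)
My plan is to prove both implications of the equivalence by contrapositive, invoking compactness of $\Abs{U}$ to obtain open neighborhoods of $\Abs{U}$ of the basic form $\Omega(X,\cC)$ with $X$ finite, and using concentration of $U$ to control how $U$ sits inside $G-X$.

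For the forward direction, I would suppose $n\mapsto N_G(D_n)$ is not finite-to-one, so after passing to a subsequence $N_G(D_n)=F$ for all $n$ and some fixed finite $F\subseteq U$. Since $N_G(D_n)=F$ and $F\subseteq U$, each $D_n$ is in fact a component of $G-F$: any path from $D_n$ in $G-F$ into $U\setminus F$ would have to leave $D_n$ through a neighbor in $F$. Crucially $D_n\cap U=\emptyset$, so for every end $\eps\in\Omega(F,D_n)$ the witness $X=F$ with $C(X,\eps)=D_n$ avoiding $U$ shows $\eps\notin\Abs{U}$. By compactness of $\Abs{U}$ and the fact that $\{\Omega(F,C):C\text{ a component of }G-F\}$ is a clopen partition of $\Omega(G)$, I can find a finite subfamily $\cC_0$ of components of $G-F$ with $\Abs{U}\subseteq\Omega(F,\cC_0)$. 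Since no $D_n$ lies in $\cC_0$, this open set is a neighborhood of $\Abs{U}$ missed by every $\eps_n$, so $\eps_n\not\to\Abs{U}$.

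For the reverse direction, I would suppose $\eps_n\not\to\Abs{U}$. Compactness of $\Abs{U}$ in the zero-dimensional $\Omega(G)$ supplies a basic open $\Omega(X,\cC)$ with $X$ finite and $\cC$ a finite family of components of $G-X$ satisfying $\Abs{U}\subseteq\Omega(X,\cC)$ but avoided by infinitely many $\eps_n$; after passing to such a subsequence, let $C'_n\notin\cC$ denote the component of $G-X$ containing a tail of $\eps_n$. Concentration forces $F:=U\setminus\bigcup\cC$ to be finite. I would then split on whether $D_n$ meets $X$: if $D_n\cap X=\emptyset$, then $D_n$ is connected in $G-X$ and meets $C'_n$, hence $D_n\subseteq C'_n$, so $N_G(D_n)\subseteq U\cap(X\cup C'_n)\subseteq F$; pigeonhole on the finitely many subsets of $F$ yields a fixed $F_0$ with $N_G(D_n)=F_0$ for infinitely many $n$, contradicting finite-to-one. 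If instead $D_n\cap X\neq\emptyset$, pigeonhole on the finite set $X$ produces some $x\in X$ in infinitely many $D_n$; those $D_n$ all coincide with the unique component of $G-U$ through $x$, so $N_G(D_n)$ takes the same value infinitely often, again contradicting finite-to-one.

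The main obstacle will be the reverse direction, specifically the case $D_n\cap X=\emptyset$: here concentration must be used precisely to trap $N_G(D_n)$ inside the fixed finite set $F$, and the argument requires juggling the three overlapping partitions of $V(G)$ induced by $U$, by $X$, and by $U\cup X$. Once the case analysis is set up, the other case where $D_n$ crosses $X$ collapses immediately via pigeonhole on $X$.
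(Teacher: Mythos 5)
Your proof is correct and follows essentially the same route as the paper's: both directions hinge on trapping the finite neighbourhoods $N_G(D_n)$ inside the finite set $U\setminus\bigcup_{\eps\in\Abs{U}}C(X,\eps)$ that concentration renders finite, and on the observation that components $D_n$ with a fixed neighbourhood $F\subseteq U$ are components of $G-F$ containing no end of $\Abs{U}$. The only cosmetic differences are that you organise both implications as contrapositives and invoke compactness in the forward direction (where the paper simply uses the possibly infinite open union $\Omega(F,\cC)$); just make sure to discard from $\cC_0$ any components not containing an end of $\Abs{U}$, so that no $D_n$ can accidentally belong to it.
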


\begin{proof}
First, suppose that for some finite $X \subset U$ there are infinitely many $n\in \N$ with $N_G(D_n) = X$. Write $\cC$ for the collection of components of $G-X$ containing an end from $\Abs{U}$. Then $\Omega(X,\cC)$ is an open neighbourhood around $\Abs{U}$ avoiding infinitely many $\eps_n$, witnessing $\eps_n \not\to \Abs{U}$.

Now assume that the map $\N\ni n\mapsto N_G(D_n)$ is finite-to-one,
and let us suppose for a contradiction that the sequence $\eps_0,\eps_1,\ldots$ does not converge to~$\Abs{U}$ in the end space of~$G$.
Then there exists an open set around $\Abs{U}$ avoiding infinitely many $\eps_n$, and by compactness of $\Abs{U}$ we may find a finite vertex set~$X\subset V(G)$ 
such that infinitely many $\eps_n$ live outside of $\bigcup_{\eps \in \Abs{U}} C(X,\eps)$.
Since the map $n\mapsto D_n$ has finite fibres, we may assume without loss of generality that the finite vertex set~$X$ meets none of the components~$D_n$ in which these ends live.
Hence, infinitely many neighbourhoods $N_G(D_n)$ avoid~$\bigcup_{\eps \in \Abs{U}} C(X,\eps)$.
But each of these neighbourhoods consists of vertices in~$U$, and since the fibres of the map $n\mapsto N_G(D_n)$ are finite, the union of these neighbourhoods is an infinite subset of~$U$ that lies outside of~$\bigcup_{\eps \in \Abs{U}} C(X,\eps)$, contradicting the assumption that $U$ is concentrated.
\end{proof}

%\newpage
\section{Uniform \texorpdfstring{$T$}{T}-graphs and special order trees}
\label{sec_Sec3}
\label{sec_3}

\begin{definition}
A $T$-graph $G$ has
\begin{itemize}
    \item \emph{finite adhesion} if for every limit ordinal $\sigma$ all components of $G-T^{\le\sigma}$ have finite neighbourhoods;
\item \emph{uniformly finite adhesion}, or for short, is \emph{\tame }  if for every limit $t\in T$ there is a finite $S_t\subset\mathring{\dc{t}}$ such that every $t'>t$ has all its down-neighbours below $t$ inside $S_t$.
\end{itemize}
\end{definition}

The property for a $T$-graph to be \tame\ has been introduced by
Diestel and Leader in~\cite{DiestelLeaderNST}, and it~has proven useful in~\cite{Enddegree} as well. Our next two lemmas illuminate the relation between finite and uniformly finite adhesion.

\begin{lemma}\label{finiteAdhesionStrenghtening}
The following are equivalent for a $T$-graph $G$:
\begin{enumerate}
    \item $G$ has finite adhesion.
    \item For every ordinal $\sigma$ all components of $G-T^{\le\sigma}$ have finite neighbourhoods.
    \item Every vertex set of the form~$\upcl{t}$ for a successor $t\in T$ has finite neighbourhood in~$G$.
\end{enumerate}
\end{lemma}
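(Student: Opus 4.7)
The plan is to establish the equivalence via a cycle $(ii) \Rightarrow (i) \Rightarrow (iii) \Rightarrow (ii)$. The implication $(ii) \Rightarrow (i)$ is immediate, since (i) is a restriction of (ii) to limit ordinals.

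For $(iii) \Rightarrow (ii)$, I would fix an arbitrary ordinal $\sigma$ and apply Lemma~\ref{lem_Tgraphproperties}(iii) to the down-closed set $T^{\le\sigma}$: the components of $G-T^{\le\sigma}$ are spanned by $\upcl{t}$ for the nodes $t$ that are minimal in $T\setminus T^{\le\sigma}$. Since the ancestors of such a $t$ fill all heights below $\mathrm{height}(t)$ and must lie in $T^{\le\sigma}$, one gets $\mathrm{height}(t)=\sigma+1$, so $t$ is a successor. Then (iii) directly yields finiteness of $N_G(\upcl{t})$.

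The main implication is $(i) \Rightarrow (iii)$, which I would prove by contraposition. Suppose $t$ is a successor and $N:=N_G(\upcl{t})$ is infinite. Since edges in a $T$-graph only run between comparable vertices, $N\subset\mathring{\dc{t}}$, a well-ordered chain containing at most one vertex at each height. Let $\lambda$ be the least ordinal with $N\cap T^{<\lambda}$ infinite. Standard ordinal-arithmetic bookkeeping shows $\lambda$ must be a limit: the case $\lambda=0$ is void, and if $\lambda=\mu+1$, then $N\cap T^{<\lambda}$ and $N\cap T^{<\mu}$ differ by at most one element (the lone member of $N\cap T^\mu$), contradicting minimality of $\lambda$.

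Writing $s$ for the predecessor of $t$, one checks $\lambda\le\mathrm{height}(s)$, so $t$ possesses a unique ancestor $t'\le t$ at height $\lambda+1$. By Lemma~\ref{lem_Tgraphproperties}(iii) the component of $G-T^{\le\lambda}$ containing $t$ is precisely $C:=\upcl{t'}$, and $\upcl{t}\subset C$. Every vertex in $N\cap T^{<\lambda}$ has height below $\lambda+1$, hence lies in $T^{\le\lambda}$ outside $C$, and is adjacent to some vertex in $\upcl{t}\subset C$; so $N(C)\supset N\cap T^{<\lambda}$ is infinite. Since $\lambda$ is a limit, this contradicts (i). The main obstacle I anticipate is the ordinal-theoretic bookkeeping: locating the right threshold $\lambda$, confirming it is a limit, and verifying that the ancestor $t'$ of $t$ at height $\lambda+1$ exists and correctly identifies the relevant component of $G-T^{\le\lambda}$.
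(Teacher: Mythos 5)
Your proof is correct and rests on the same ideas as the paper's: the routine directions are applications of Lemma~\ref{lem_Tgraphproperties}~(iii) to the down-closed sets $T^{\le\sigma}$, and the substantive direction takes an infinite, well-ordered chain of neighbours and pushes it down to a limit level $\lambda$ where it already violates finite adhesion. You arrange the cycle differently (the paper proves (i)$\Rightarrow$(ii)$\Rightarrow$(iii)$\Rightarrow$(i), placing the substantive work in (i)$\Rightarrow$(ii) rather than in (i)$\Rightarrow$(iii)), but this is only a cosmetic reorganization of the same argument.
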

\begin{proof}
(i) $\Rightarrow$ (ii). For limits $\sigma$, this holds by assumption, so suppose that $\sigma$ is a non-limit and consider any component~$C$ of $G-T^{\le\sigma}$.
Let us assume for a contradiction that $C$ has infinitely many neighbours.
Using that these form a well-ordered chain and that $\sigma$ is a non-limit, we find an $\omega$-chain of infinitely many neighbours $t_0<t_1<\cdots$ of~$C$ and a limit $t\in T^{<\sigma}$ such that $t_n<t$ for all~$n<\omega$.
Let $\mu$ denote the height of~$t$ and note that $\mu$ is a limit.
Then the component of $G-T^{\le\mu}$ which contains~$C$ has infinite neighbourhood, contradicting our assumption that $G$ has finite adhesion.

(ii) $\Rightarrow$ (iii). Suppose $t$ is a successor, and let $\sigma$ be the height of the predecessor of $t$. By \cref{lem_Tgraphproperties}~\ref{itemT3}, the unique component of $G-T^{\leq \sigma}$ containing $t$ is spanned by $\upcl{t}$, so has finite neighbourhood.

(iii) $\Rightarrow$ (i). Let $\sigma$ be a limit ordinal. By \cref{lem_Tgraphproperties}~\ref{itemT3}, the components of $G-T^{\leq \sigma}$ are spanned  by $\upcl{t}$ for $t \in T^{\sigma+1}$, so have finite neighbourhoods by assumption (iii).
\end{proof}

\begin{lemma}\label{uniformfiniteAdhesionStrenghtening}
The following are equivalent for a $T$-graph $G$:
\begin{enumerate}
    \item $G$ has uniformly finite adhesion.
    \item Every vertex set of the form~$\underring{\upcl{t}}$ for a limit $t\in T$ has finite neighbourhood in~$G$.
    \item Every vertex set of the form~$\underring{\upcl{t}}$ has finite neighbourhood in~$G$.
\end{enumerate}
\end{lemma}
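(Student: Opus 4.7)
My plan is to establish the cycle (i)$\Rightarrow$(ii)$\Rightarrow$(iii)$\Rightarrow$(i), closely mirroring the structure of Lemma~\ref{finiteAdhesionStrenghtening}. The unifying observation is that the $T$-graph property forces $N(\underring{\upcl{t}}) \subseteq \downcl{t}$ for \emph{any} $t\in T$: if $v$ is a neighbour of some $t'>t$ with $v\notin\underring{\upcl{t}}$, then $v$ and $t'$ are comparable in $T$ (edges only run between comparable vertices), so $v<t'$, and well-ordering of $\downcl{t'}$ then forces $v\le t$.

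With this observation, two of the three implications are short. For (i)$\Rightarrow$(ii) applied to a limit~$t$, the inclusion $N(\underring{\upcl{t}})\subseteq\{t\}\cup S_t$ is finite, since any $v$ in $\mathring{\downcl{t}}$ that is a neighbour of some $t'>t$ is forced into $S_t$ by definition of uniform adhesion. Conversely, (iii)$\Rightarrow$(i) follows by setting $S_t:=N(\underring{\upcl{t}})\cap\mathring{\downcl{t}}$: every down-neighbour of a point $t'>t$ lying strictly below $t$ is a neighbour of $\underring{\upcl{t}}$, hence automatically lies in the finite set $S_t$.

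The substantive step is (ii)$\Rightarrow$(iii), where one needs to extend the hypothesis (which only covers limits) to \emph{all} $t\in T$, in particular successors. For a successor $t$ with predecessor $s$, I would first show
\[
    N(\underring{\upcl{t}}) \;\subseteq\; N(\underring{\upcl{s}}) \cup \{t\},
\]
by splitting any $v\in N(\underring{\upcl{t}})$ into two cases: either $v\notin\underring{\upcl{s}}$, whence $v\in N(\underring{\upcl{s}})$ directly; or $v\in\underring{\upcl{s}}$, in which case $s<v\le t$ combined with $t$ being a successor of $s$ forces $v=t$. One then iterates this reduction, passing from $t$ to $s$, to the predecessor of $s$, and so on. The main obstacle is to ensure that this descent actually terminates: since $\downcl{t}$ is well-ordered, any strictly descending sequence in it is finite, so the chain of predecessors must eventually hit either a limit node $t_0$ (where $N(\underring{\upcl{t_0}})$ is finite by (ii)) or the root $r$ (for which trivially $N(\underring{\upcl{r}})\subseteq\{r\}$). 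Only finitely many vertices get adjoined along the descent, so $N(\underring{\upcl{t}})$ is finite as required.
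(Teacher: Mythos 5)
Your proof is correct, and two of its three implications ((i)$\Rightarrow$(ii) and (iii)$\Rightarrow$(i)) coincide with the paper's; the only genuine divergence is in (ii)$\Rightarrow$(iii). There the paper argues by contradiction in the opposite direction: if some $\underring{\upcl{t}}$ had infinitely many neighbours, these would form an infinite subset of the well-ordered chain $\downcl{t}$, hence contain an increasing $\omega$-chain $t_0<t_1<\cdots$ whose least upper bound in $\downcl{t}$ is a limit $\ell\le t$; since $\underring{\upcl{\ell}}\supseteq\underring{\upcl{t}}$ and each $t_n<\ell$, the set $\underring{\upcl{\ell}}$ would then also have infinite neighbourhood, contradicting (ii). You instead descend through the finite chain of predecessors, using the inclusion $N(\underring{\upcl{t}})\subseteq N(\underring{\upcl{s}})\cup\{t\}$ for a successor $t$ of $s$, until you reach a limit or the root. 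Both arguments hinge on the same two facts --- the containment $N(\underring{\upcl{t}})\subseteq\downcl{t}$ and the well-ordering of $\downcl{t}$ --- and both are complete; the paper's version locates a single limit node dominating the putative infinite neighbourhood in one step, while yours is a direct (contradiction-free) finite induction that makes the bookkeeping of which vertices get adjoined explicit. Either is acceptable.
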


\begin{proof}
(i) $\Rightarrow$ (ii). If $G$ has uniformly finite adhesion, then every vertex set of the form~$\underring{\upcl{t}}$ for a limit $t\in T$ has all its neighbours in the finite set $S_t\cup\{t\}$.

(ii) $\Rightarrow$ (iii). Let us assume for a contradiction that some $\underring{\upcl{t}}$ has infinitely many neighbours. 
Then we find an $\omega$-chain of infinitely many neighbours $t_0<t_1<\cdots$ of~$\underring{\upcl{t}}$ and a limit $\ell\le t$ such that $t_n<\ell$ for all~$n<\omega$. But then also $\underring{\upcl{\ell}} \supset \underring{\upcl{t}}$ has infinite neighbourhood, a contradiction.

(iii) $\Rightarrow$ (i). Let $t$ be a limit node of $T$. Then $S_t = N(\underring{\upcl{t}}) \setminus \{t\}$ is as desired.
\end{proof}

Comparing \cref{finiteAdhesionStrenghtening}~(iii) with \cref{uniformfiniteAdhesionStrenghtening}~(iii) explains the name `uniform' and in particular shows that uniformly finite adhesion implies finite adhesion.

\begin{lemma}
\label{lem_downNeighboursOmegaChain}
If $G$ is a $T$-graph of finite adhesion and $t\in T$ is a limit, then the down-neighbours of~$t$ form a cofinal $\omega$-chain in~$\mathring{\dc{t}}$.
\end{lemma}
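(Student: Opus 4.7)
The plan is to exploit finite adhesion via Lemma~\ref{finiteAdhesionStrenghtening}~(i) to cap the possible order type of the chain of down-neighbours of $t$. Let $D_t$ denote the set of down-neighbours of $t$; since $\mathring{\dc{t}}$ is well-ordered, $D_t$ inherits a well-ordering, and by the defining property of a $T$-graph, $D_t$ is cofinal in $\mathring{\dc{t}}$. My aim is to show that the order type $\gamma$ of~$D_t$ equals~$\omega$.

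First I would rule out that $\gamma$ is a successor ordinal. Indeed, if $D_t$ had a maximum element~$s^*$, then cofinality would force $s^*$ to be maximal in $\mathring{\dc{t}}$; but $t$~being a limit means $\mathring{\dc{t}}$ has no maximum, a contradiction. Hence $\gamma$ is a limit ordinal, and in particular $\gamma\ge\omega$.

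The main step is then to rule out $\gamma>\omega$, and this is the only point where finite adhesion enters. Suppose for contradiction $\gamma>\omega$, and enumerate the first $\omega$ elements of $D_t$ as $s_0<s_1<\cdots$. Because $\gamma>\omega$, there exists $s_\omega\in D_t$ above all of them, so all $s_n$ lie strictly below $s_\omega<t$. Put $\sigma:=\sup_n\mathrm{height}(s_n)$; as a supremum of a strictly increasing $\omega$-sequence of ordinals bounded by $\mathrm{height}(s_\omega)$, the ordinal $\sigma$ is a limit with $\sigma\le\mathrm{height}(s_\omega)<\mathrm{height}(t)$. Now let $C$ be the component of $G-T^{\le\sigma}$ containing the vertex~$t$ (which lies outside $T^{\le\sigma}$ because $\mathrm{height}(t)>\sigma$). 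Each $s_n$ has height strictly less than~$\sigma$, hence $s_n\in T^{\le\sigma}$, and since $s_n$ is a neighbour of $t\in C$, we obtain $\{s_n:n<\omega\}\subset N(C)$, so $N(C)$ is infinite. This contradicts finite adhesion in the form of Lemma~\ref{finiteAdhesionStrenghtening}~(i) applied to the limit ordinal~$\sigma$.

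The only potential obstacle I see is verifying that $\sigma$ is genuinely a \emph{limit} ordinal, since Lemma~\ref{finiteAdhesionStrenghtening}~(i) is only useful at limits; but this is automatic because $\sigma$ is the supremum of a strictly increasing $\omega$-sequence, and because the $s_n$ are pairwise comparable and distinct their heights are strictly increasing. Putting both steps together, $\gamma=\omega$, which is the desired conclusion.
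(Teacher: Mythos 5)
Your proof is correct and uses essentially the same mechanism as the paper's: finite adhesion applied at a level strictly below $t$ caps the order type of the (cofinal, well-ordered) chain of down-neighbours at $\omega$. The paper states this directly — below every successor $s\in\mathring{\dc{t}}$ only finitely many down-neighbours of $t$ can lie, as they all belong to the finite set $N(\upcl{s})$ — whereas you argue by contradiction at a limit level $\sigma$; the two formulations are interchangeable via Lemma~\ref{finiteAdhesionStrenghtening}.
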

\begin{proof}
The down-neighbours of~$t$ form an infinite chain~$\cC\subset\mathring{\dc{t}}$.
Since $G$ has finite adhesion, below every successor in~$\mathring{\dc{t}}$ there are only finitely many elements of~$\cC$.
It follows that $\cC$ must be an $\omega$-chain.
\end{proof}

Our next result characterises on which order trees there exist $T$-graphs of (uniformly) finite adhesion. 
Recall from~\cite{baumgartner1970results} that a partial order $P$ is said to \emph{embed} into a partial order $Q$ if there is a map $f \colon P \to Q$ such that $p < p'$ implies $f(p) < f(p')$ (in particular, such maps are not required to be injective).

\begin{theorem}[Baumgartner and Galvin {\cite[Section~4.1]{baumgartner1970results}}]\label{specialVsEmbedding}
    Let $(T,\leq)$ be an order tree.
    \begin{enumerate}
        \item $(T,\le)$ is special if and only if  $(T,\leq)$ embeds into $(\mathbb{Q},\leq)$.
        \item $(T,\le)$ is semi-special if and only if $(T,\leq)$ embeds into $(\mathbb{R},\leq)$.
\end{enumerate}
\end{theorem}

\begin{theorem}
\label{prop_specialtrees}
Let $(T,\leq)$ be an order tree.
\begin{enumerate}
\item There exists a $T$-graph of uniformly finite adhesion if and only if $(T,\leq)$ is special.
\item There exists a $T$-graph of finite adhesion if and only if $(T,\leq)$ is semi-special.
\end{enumerate}
\end{theorem}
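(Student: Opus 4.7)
I would prove each of the four implications separately.

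For both $(\Leftarrow)$ directions, I would construct a $T$-graph from the given embedding $f$. For~(i), the fibres $f^{-1}(q)$ for $q\in\mathbb{Q}$ form a countable partition of~$T$ into antichains (since $f$ is strictly monotone on every chain), witnessing that $T$ is special; the Diestel-Leader-Todorcevic construction \cite[Theorem~6.2]{DiestelLeaderNST} recalled in the introduction then yields a uniform $T$-graph. For~(ii), every $\mathring{\dc{t}}$ is a well-ordered subset of~$\mathbb{R}$ and thus countable, of cofinality~$\omega$ when $t$ is a limit; I would build the $T$-graph by joining each successor to its predecessor and, at each limit $t$, picking a cofinal $\omega$-chain $t_0<t_1<\cdots$ of down-neighbours with $f(t_n)\to f(t)$ sufficiently fast. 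The challenge is to secure finite adhesion at every successor: the idea is to select these cofinal chains recursively, reusing previously chosen down-neighbours whenever possible, so that for each successor $s^+$ only finitely many vertices below $s$ arise as down-neighbours of limits above $s^+$.

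For the $(\Rightarrow)$ direction of~(i), my plan is to exhibit a partition of $T$ into countably many antichains; the classical Kurepa-type result that every special tree embeds in $\mathbb{Q}$ then supplies the embedding. Given a uniform $T$-graph with separators $S_t$, I would attach to each $t\in T$ a natural-number label encoding its position relative to the finite separators $S_\ell$ at the ancestor limits $\ell$ of $t$; with careful bookkeeping, uniformity guarantees both that only countably many labels are needed and that two comparable nodes receive different labels, so that the fibres of this labelling form antichains.

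The $(\Rightarrow)$ direction of~(ii) is the main obstacle. I would first show that every chain of $T$ is countable by a pressing-down argument: an $\omega_1$-chain in $T$ would yield, through iterated applications of Fodor's lemma to the cofinal $\omega$-chains of down-neighbours at its limit nodes (which exist by Lemma~\ref{lem_downNeighboursOmegaChain}), an infinite set of down-neighbours below a single successor $t$ along the chain, contradicting finite adhesion of $N(\upcl{t})$. I would then construct the embedding $f\colon T\to\mathbb{R}$ by transfinite recursion, at each node reserving a sub-interval of $\mathbb{R}$ for its subtree and placing $f(t)$ at the bottom of that interval; the finite adhesion of the $T$-graph supplies the local finiteness needed to carry the recursion through even when $T$ has height approaching $\omega_1$ or uncountable levels. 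I expect the delicate bookkeeping of intervals in this recursive construction to be the main technical hurdle.
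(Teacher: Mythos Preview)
Your plan for (i)($\Leftarrow$) matches the paper. For (i)($\Rightarrow$) your labelling idea is in the right spirit but vague; the paper instead defines a single regressive map $f\colon T\setminus\{r\}\to T$ sending each successor to its predecessor and each limit $t$ to any neighbour in $(\max S_t,t)$, checks in two lines that $f$ is ${\le}2$--$1$ on every branch, and then quotes Todor\v{c}evi\'c's theorem that such a map forces $T$ to be special. This is considerably cleaner than bookkeeping positions relative to all ancestor separators.

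The real divergence, and the real gap, is in~(ii). The paper does not attack either direction of~(ii) directly. Instead it invokes the Baumgartner--Galvin characterisation that $T$ embeds into~$\mathbb{R}$ if and only if the \emph{successor} nodes of~$T$ admit a countable antichain partition. With this in hand, ($\Leftarrow$) becomes a one-line rerun of the Diestel--Leader--Todor\v{c}evi\'c construction using the antichain partition of the successors; your proposal to pick cofinal $\omega$-chains with $f(t_n)\to f(t)$ ``sufficiently fast'' and ``reuse'' earlier choices has no clear mechanism for guaranteeing that $N(\upcl{s})$ is finite globally, and indeed $f(t_n)$ need not converge to $f(t)$ at all under a mere order-embedding. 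For~($\Rightarrow$), the paper performs a small tree surgery: replace each limit $\ell$ that has successors by one new node $v(\ell,s)$ per successor~$s$, inserted between $\ell$ and~$s$, and then delete~$\ell$. The finite-adhesion $T$-graph on $T$ induces a \emph{uniform} $T'$-graph on the modified tree~$T'$, so part~(i) gives that $T'$ is special, and hence the successors of~$T$ (which survive unchanged in~$T'$) admit a countable antichain partition. This reduction of~(ii) to~(i) is the key idea you are missing.

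Your Fodor argument for countable branches is actually sound (iterated pressing-down on the least $k$ down-neighbours produces an infinite subset of some $N(\upcl{c_{\delta+1}})$), but countable branches alone do not give an embedding into~$\mathbb{R}$: a Suslin tree has only countable chains yet its successors admit no countable antichain partition, so it does not embed into~$\mathbb{R}$. Your direct interval-reservation recursion would therefore have to use finite adhesion in an essential way at every step, and you give no indication how; this is precisely the ``main technical hurdle'' you flag, and the paper sidesteps it entirely via the reduction above.
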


\begin{proof}
(i) 
Suppose first that $T$ is special with antichain partition $\{U_n\colon n \in \N\}$. Following Diestel, Leader and Todorcevic~\cite{DiestelLeaderNST},  we construct a $T$-graph as follows: Successors are connected to their predecessors, and given a limit $t \in T$, pick down-neighbours $t_0 <_T t_1 <_T t_2 <_T \cdots<_T  t$ with $t_i \in U_{n_i}$ recursively such that $t_{i-1} < t_i < t$ and each $n_{i}$ is smallest  possible. We claim that the resulting $T$-graph $G$ has uniformly finite adhesion.
Indeed, $S_t$ may be chosen to consist of all elements of $\mathring{\lceil t \rceil}$ contained in an antichain with smaller index than the antichain containing $t$.

Conversely, suppose that there is a $T$-graph $G$ of uniformly finite adhesion with root~$r$, say. We argue that there exists a regressive function $f \colon (T - r) \to T$ such that each fibre can be covered by countably many antichains (then $T$ itself is special by a well-known observation of Todorcevic \cite[Theorem~2.4]{Todorcevic1981}). Indeed, map all successors to their unique predecessor, and map a limit $t$ to any of its neighbours in $(\max S_t,t)$. This regressive map is ${\leq}2-1$ on branches of $T$: Indeed, suppose for a contradiction that there are two limits $t<t'$ with $f(t) = f(t')$. Since $t' > t > f(t) = f(t')$ we have $f(t') \in S_t$. But then $f(t) \in S_t$, a contradiction.

(ii) Suppose that the successors of $T$ admit an antichain partition $(U_n)_{n \in \N}$. Following the same procedure as above, we construct a $T$-graph as follows: Successors are connected to their predecessors, and given a limit $t \in T$, pick down-neighbours $t_0 <_T t_1 <_T t_2 <_T \cdots<_T  t$ with $t_i \in U_{n_i}$ recursively such that $t_{i-1} < t_i < t$ and each $n_{i}$ is smallest  possible. We claim that the resulting $T$-graph $G$ has finite adhesion. 
Indeed, for any limit ordinal $\sigma$ and any component $C$ of $G-T^{\le\sigma}$ we have $C = \upcl{s}$ for a unique successor node $s$ of some limit $t \in T^\sigma$ by \cref{lem_Tgraphproperties}~\ref{itemT3}. Hence, $N(C)$ is finite, as all elements of $N(C) \cap \mathring{\dc{t}}$ belong to  antichains with smaller indices than the antichain containing $s$.

Conversely, suppose that there is a $T$-graph $G$ of finite adhesion. 
Let $L\subset T$ consist of all  limits of~$T$ that do have successors in $T$, and for every limit $\ell\in L$ let the set $S(\ell)$ consist of all the successors of $\ell$ in~$T$. 
We obtain a new order tree $T'$ from~$T$ as follows:
First, add for each $\ell\in L$ and $s\in S(\ell)$ a new node $v(\ell,s)$ that we declare to be a successor of $\ell$ and a predecessor of~$s$. Then delete~$L$.
Note that there is a natural epimorphism $\varphi\colon T' \to T$ which is the identity on $T\setminus L$ and which sends each $v(\ell,s)$ to $\ell$.

Let $G'$ be the $T'$-graph with $V(G'):=T'$ and 
$$E(G') := \set{tt'}:{t < t' \in T' \; \wedge \; \varphi(t)\varphi(t') \in E(G)}.$$
Consider a limit $t=v(\ell,s)$ of $T'$. Since $G$ has finite adhesion, $\upcl{s}_T$ has finite neighbourhood say $S_s$ in~$G$. Since every $t' > t$ in $T'$ satisfies $\varphi(t') \in \upcl{s}_T$, the set $S_t=\varphi^{-1}(S_s) \cap \dc{t}_{T'}$ witnesses that $G'$ has uniformly finite adhesion.
It follows from (i) that $T'$ is special, so admits a countable antichain partition. This induces an antichain partition on the successors of~$T'$.
\end{proof}

\begin{corollary}
\label{cor_countablebranches}
If $G$ is a $T$-graph of finite adhesion, then all branches of~$T$ are countable. 
\end{corollary}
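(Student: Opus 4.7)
The plan is to deduce the corollary from Theorem~\ref{prop_specialtrees}(ii) together with the well-known fact that every well-ordered subset of $(\mathbb{R},\leq)$ is countable.

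First, by Theorem~\ref{prop_specialtrees}(ii), the existence of a $T$-graph of finite adhesion yields an order-preserving map $f\colon T\to\mathbb{R}$, i.e.\ a map with $t<t'$ implying $f(t)<f(t')$. The one point to watch is that such an $f$ need not be injective in general. However, any branch $B$ of $T$ is by definition a maximal chain, so any two distinct elements of $B$ are comparable. Hence $f\restriction B$ is strictly order-preserving, in particular injective, and maps $B$ onto a subset of $\mathbb{R}$ order-isomorphic to $B$ itself.

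Next, I would invoke the standard fact that every well-ordered subset $A\subset\mathbb{R}$ is countable: for each non-maximal $a\in A$ let $a^+$ denote its immediate successor in $A$ and pick a rational $q_a\in(a,a^+)$. If $a<b$ in $A$ then $a^+\leq b$, so $q_a<a^+\leq b<q_b$, showing $a\mapsto q_a$ is injective into $\mathbb{Q}$. Applied to $A:=f(B)$, which inherits a well-ordering from $B$, this gives $|B|=|f(B)|\leq\aleph_0$.

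There is essentially no obstacle here beyond what Theorem~\ref{prop_specialtrees}(ii) already delivers; the only subtlety is remembering that \emph{embedding} in the sense of \cite{baumgartner1970results} is weaker than injectivity, but this weakness is harmless on chains and hence on branches.
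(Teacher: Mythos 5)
Your proposal is correct and is essentially the paper's own argument: the paper's proof consists of the single sentence ``An uncountable branch does not embed into the reals,'' which is exactly your deduction from Theorem~\ref{prop_specialtrees}(ii) plus the standard fact that well-ordered subsets of $\mathbb{R}$ are countable. Your extra care about the embedding being non-injective in general but injective on chains is a correct (and worthwhile) elaboration of the same route.
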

\begin{proof}
An uncountable branch 
does not embed into the reals.
\end{proof}

\begin{corollary}
\label{cor_unctblclique}
$T$-graphs of finite adhesion do not contain uncountable clique minors.
\end{corollary}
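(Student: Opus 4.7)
The plan is to derive a contradiction from the existence of an uncountable clique minor by exhibiting an uncountable chain in $T$, which is ruled out by Corollary~\ref{cor_countablebranches}. So suppose, for a contradiction, that $G$ contains an uncountable clique minor, witnessed by pairwise disjoint connected branch sets $(B_\alpha)_{\alpha < \omega_1}$ such that for all $\alpha \neq \beta$ there is an edge of $G$ between $B_\alpha$ and $B_\beta$.

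By Lemma~\ref{lem_Tgraphproperties}~\ref{itemT2} each connected subgraph $B_\alpha$ has a unique $T$-minimal element $t_\alpha$. Since the branch sets are pairwise disjoint, the $t_\alpha$ are pairwise distinct, and hence $S := \set{t_\alpha}:{\alpha < \omega_1}$ is an uncountable subset of $T$.

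The key step is to show that $S$ is a chain. Fix $\alpha \neq \beta$ and let $uv$ be an edge of $G$ with $u \in B_\alpha$ and $v \in B_\beta$. Since $T$ is normal in $G$, the vertices $u$ and $v$ are comparable in the tree-order, say $u < v$. By $T$-minimality $t_\alpha \leq u < v$ and $t_\beta \leq v$, so both $t_\alpha$ and $t_\beta$ lie in the down-closure $\downcl{v}$. But $\downcl{v}$ is well-ordered, hence a chain, so $t_\alpha$ and $t_\beta$ are comparable in $T$. As this holds for all pairs, $S$ is indeed a chain.

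Any chain in an order tree extends to a maximal chain, i.e.\ to a branch of $T$. Thus $S$ is contained in some branch of $T$, which has cardinality $\aleph_1$. But by Corollary~\ref{cor_countablebranches} every branch of a $T$-graph of finite adhesion is countable, the desired contradiction. The argument is essentially complete once one observes that edges between branch sets force their $T$-minima to be comparable; no further obstacle arises.
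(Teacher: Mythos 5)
Your proof is correct, and it takes a genuinely different route from the paper's. The paper first upgrades the uncountable clique minor to a subdivided uncountable clique via a theorem of Jung, then picks two branch vertices $v,w$ that are incomparable in $T$ (which exist because the branch vertices cannot all lie on one countable branch) and derives a contradiction from Lemma~\ref{lem_Tgraphproperties}~\ref{itemT1}: the countable set $\downcl{v}\cap\downcl{w}$ would separate two branch vertices of a subdivided $K_{\aleph_1}$, which is impossible. You instead work directly with the branch sets of the minor: each connected branch set has a $T$-least element by Lemma~\ref{lem_Tgraphproperties}~\ref{itemT2}, and an edge between two branch sets forces these least elements into a common down-closure, hence makes them comparable; so the $\aleph_1$ many least elements form an uncountable chain, contradicting Corollary~\ref{cor_countablebranches}. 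This is arguably cleaner, since it avoids invoking Jung's minor-to-subdivision theorem and the separator lemma altogether, and it isolates the slightly stronger fact that a $T$-graph on a tree with only countable chains has no uncountable clique minor. The one point worth making explicit is that the ``unique $T$-minimal element'' of Lemma~\ref{lem_Tgraphproperties}~\ref{itemT2} is in fact a minimum, i.e.\ lies below every vertex of the connected subgraph; this is immediate in a tree order (the least element of $\downcl{s}\cap B_\alpha$ is minimal in $B_\alpha$ for each $s\in B_\alpha$, hence equals the unique minimal element by uniqueness), and it is the standard reading of that lemma, but your step ``$t_\alpha\le u$'' silently relies on it.
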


\begin{proof}
Suppose some $T$-graph of finite adhesion contains an uncountable clique minor. Then by a result due to Jung \cite{jung1967zusammenzuge}, this graph would also contain a subdivision of an uncountable clique. By \cref{cor_countablebranches}, some pair of branch vertices $v,w$ of this clique must be incomparable in $T$. By \cref{lem_Tgraphproperties}~\ref{itemT1} the set $\lceil v \rceil \cap \lceil w \rceil$ is a countable separator between $v$ and $w$, a contradiction.
\end{proof}

\section{End spaces of \texorpdfstring{$T$}{T}-graphs of finite adhesion}
\label{sec_4}

Recall from the introduction that a down-closed chain $\cC$ in an order tree $T$ is called a \emph{high-ray} of~$T$ if its order-type %has cofinality~$\omega$.
is a limit ordinal (equivalently: if $\cC$ has no maximal element).
We denote the set of all high-rays of~$T$ by~$\HR(T)$.
In this section, we show that the end space of a $T$-graph of finite adhesion can be understood through the high-rays of~$T$. Note that in this set-up, all high-rays in $T$ have cofinality $\omega$ due to \cref{cor_countablebranches}.

For this, let $G$ be any $T$-graph of finite adhesion.
Every $t\in T$ induces a bipartition $\{\,\upcl{t}\,,\,T\setminus\uc{t}\,\}$ of~$T$.
Since $T$ is the vertex set of~$G$, it follows that $\Omega(G)=\Abs{\upcl{t}}\cup\Abs{(T\setminus\upcl{t})}$.
For every end~$\eps$ of~$G$, let us put
\[
    \sigma(\eps):=\{\,t\in T\mid \eps\in\Abs{\upcl{t}}\setminus\Abs{(T\setminus\upcl{t})}\,\}.
\]
Clearly, $\sigma(\eps)$ is a down-closed chain in~$T$.
At the end of the next section, we will see that

\begin{lemma}\label{SigmaBijection}
The map $\sigma$ is a bijection between the end space $\Omega(G)$ and the set $\HR(T)$ of all high-rays of~$T$, for each $T$-graph~$G$ of finite adhesion.
\end{lemma}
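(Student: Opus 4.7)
The plan is to produce an explicit inverse $\tau\colon\HR(T)\to\Omega(G)$ to $\sigma$ and verify that $\tau$ is surjective and $\sigma\circ\tau=\mathrm{id}_{\HR(T)}$; together these force $\tau$ to be bijective with inverse $\sigma$. A useful first observation is that $t\in\sigma(\eps)$ is equivalent to the existence of a finite $X\subset V(G)$ with $C(X,\eps)\subset\upcl{t}$: this gives down-closedness of $\sigma(\eps)$ at once, and also the chain property, because if $t,t'\in\sigma(\eps)$ are witnessed by $X,X'$ then $\emptyset\neq C(X\cup X',\eps)\subset\upcl{t}\cap\upcl{t'}$ forces $t$ and $t'$ to have a common upper bound and hence to be comparable in the order tree.

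To construct $\tau$, given $\cC\in\HR(T)$ fix any cofinal $\omega$-sequence $t_1<t_2<\cdots$ in $\cC$. The tails $H_n:=\cC\cap\upcl{t_n}$ are nested with empty intersection (as $\cC$ has no maximum), and each spans a connected subgraph of $G$: by down-closedness of $\cC$ one has $H_n=\bigcup_{s\in H_n}[t_n,s]$, a union of intervals each connected in $G$ by Lemma~\ref{lem_Tgraphproperties}~\ref{itemT4} and all sharing the base point $t_n$. Hence for every finite $X$ almost every $H_n$ avoids $X$ and lies in a single component $C_X$ of $G-X$; the family $\{C_X\}$ is compatible under refinement of $X$ and defines an end $\tau(\cC)$ with $C(X,\tau(\cC))=C_X$, independently of the cofinal sequence chosen.

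For surjectivity of $\tau$, fix $\eps\in\Omega(G)$ and a ray $R=v_0v_1\cdots\in\eps$. By Lemma~\ref{lem_Tgraphproperties}~\ref{itemT2} each tail $R_n=v_nv_{n+1}\cdots$ has a unique tree-minimum $w_n\in T$, and a brief case analysis on whether $w_n=v_n$ or not gives $w_n\le w_{n+1}$ with strict increase precisely when $w_n=v_n$. Since every $w_n$ lies on $R$ (being the $T$-minimum of the connected set $R_n$) and is eventually discarded when we pass to a later tail, the sequence $(w_n)$ cannot be eventually constant, and therefore admits a strictly increasing subsequence $w_{n_k}$; the down-closure $\cC:=\bigcup_k\dc{w_{n_k}}$ is then a high-ray. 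Moreover $w_{n_k}\in R_{n_k}\cap H_k$ for every $k$, so for any finite $X$ and large $k$ the tail of $R$ and the set $H_k$ from the definition of $\tau(\cC)$ lie in the same component of $G-X$, giving $\tau(\cC)=\eps$.

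For $\sigma\circ\tau=\mathrm{id}_{\HR(T)}$, I would first refine the cofinal sequence of $\cC$ to consist of successors of $T$: between consecutive members $t_i<t_{i+1}$ the interval $(t_i,t_{i+1}]$ is well-ordered as a subset of $\dc{t_{i+1}}$ and hence has a minimum, which must be a successor of $t_i$ in $T$ and lies in $\cC$ by down-closure. For any such successor $s\in\cC$, Lemma~\ref{finiteAdhesionStrenghtening} provides the finite cut $X:=N(\upcl{s})$ after which $\upcl{s}$ is a single component of $G-X$; since $H_j\subset\upcl{s}\cap C(X,\tau(\cC))$ for all sufficiently large $j$, both components coincide and $C(X,\tau(\cC))\subset\upcl{s}$, so $s\in\sigma(\tau(\cC))$, and down-closure then gives $\cC\subset\sigma(\tau(\cC))$. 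The reverse inclusion is easy: if $t\in\sigma(\tau(\cC))$, witnessed by $X$, then for large $j$ the base point $t_j\in H_j\subset C(X,\tau(\cC))\subset\upcl{t}$, so $t\leq t_j\in\cC$ and hence $t\in\cC$. The main subtlety is the interaction with limit nodes of $T$, where the bipartition $\{\upcl{t},T\setminus\upcl{t}\}$ typically fails to be a finite cut; the detour via cofinal sequences of successors, available by well-foundedness of $\dc{\cdot}$, sidesteps this cleanly.
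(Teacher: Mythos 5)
Your argument is correct, but it is organised differently from the paper's. The paper obtains Lemma~\ref{SigmaBijection} as the singleton-parts special case of its general partition-tree machinery in Section~\ref{sec_5}: Lemma~\ref{lem:livingInPoints} rules out that $\sigma(\eps)$ has a maximal element (an end living at a point would force that part to be infinite), Lemma~\ref{lem:findRayForHighRay} supplies an end for each high-ray via an explicit comb, and Lemma~\ref{lem:uniqueLimit} proves uniqueness by joining any two candidate rays with infinitely many disjoint paths. You instead build the inverse $\tau$ directly and verify surjectivity of $\tau$ together with $\sigma\circ\tau=\mathrm{id}_{\HR(T)}$, so injectivity of $\sigma$ comes for free and the disjoint-paths uniqueness argument is avoided entirely; your surjectivity argument via the $T$-minima $w_n$ of successive tails of a ray (non-decreasing, never eventually constant) is an elegant, elementary substitute for Lemma~\ref{lem:livingInPoints}, and your use of the finite cuts $N(\upcl{t})$ at successor nodes (Lemma~\ref{finiteAdhesionStrenghtening}~(iii)) to certify $s\in\sigma(\tau(\cC))$ matches the paper's use of finite adhesion. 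The paper's longer route pays off because those three lemmas are reused for arbitrary partition trees later on, whereas your route is shorter and self-contained for this one statement. One step you should make explicit: ``the family $\{C_X\}$ defines an end'' needs an actual ray, not just a compatible choice of components; this is immediate from your own setup (concatenate $t_n$--$t_{n+1}$ paths inside the connected, nested sets $H_n$, whose intersection is empty, so every vertex is eventually avoided and the resulting infinite walk contains a ray inducing exactly the components $C_X$), but as written it silently invokes the correspondence between directions and ends.
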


\begin{corollary}
\label{cor_ctbldegree}
All ends of $T$-graphs of finite adhesion have countable degree.
\end{corollary}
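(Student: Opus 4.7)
The plan is to show that every end $\eps$ of $G$ has degree at most $|\cC| = \aleph_0$, where $\cC := \sigma(\eps)$ is the high-ray of $T$ assigned to $\eps$ by Lemma~\ref{SigmaBijection} and is countable by Corollary~\ref{cor_countablebranches}. More precisely, for any ray $R \in \eps$ I would prove that its unique $T$-minimal vertex $r$ (Lemma~\ref{lem_Tgraphproperties}~\ref{itemT2}) must lie in $\cC$; since the $T$-minimal vertices of pairwise disjoint rays are distinct, this bounds the degree of $\eps$ by $|\cC|$.

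First I would choose a cofinal sequence of successor nodes $t_0 < t_1 < \cdots$ in $\cC$, which is possible because the successors of $T$ are cofinal in every down-closed chain of limit order type. By finite adhesion (Lemma~\ref{finiteAdhesionStrenghtening}) each $X_n := N(\upcl{t_n})$ is finite, and by Lemma~\ref{lem_Tgraphproperties}~\ref{itemT4} the set $\upcl{t_n}$ is connected, hence is a component of $G - X_n$; since $t_n \in \cC = \sigma(\eps)$, the end $\eps$ lives in this component. Given $R \in \eps$ with $T$-minimal vertex $r$, the tail of $R$ inside $\upcl{t_n}$ sits in $\upcl{r} \cap \upcl{t_n}$, so this intersection is non-empty and $r$ must be comparable with every $t_n$. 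A short case analysis then yields that either $r \in \cC$ or $r$ is an upper bound of $\cC$.

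The decisive step, and the main obstacle, is to rule out the upper-bound case. Any upper bound $r$ of $\cC$ sits above a unique top $t^*$ of $\cC$ (namely, the element of $\downcl{r}$ at height equal to the order type of $\cC$), so $R \subseteq \upcl{r} \subseteq \upcl{t^*}$. Since the successors of $t^*$ partition $\mathring{\upcl{t^*}}$ into pairwise incomparable upsets $\upcl{s}$ between which $G$ has no edges, and since a ray visits $t^*$ at most once, some tail of $R$ is contained in $\upcl{s}$ for a single successor $s$ of $t^*$. Because $s$ is a successor, $N(\upcl{s})$ is again finite (Lemma~\ref{finiteAdhesionStrenghtening}), so $\upcl{s}$ is a component of $G - N(\upcl{s})$, and $\eps$ lives in $\upcl{s}$. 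But then every ray of $\eps$ eventually sits in $\upcl{s} \subseteq \upcl{t^*}$, which unpacks to $\eps \in \Abs{\upcl{t^*}} \setminus \Abs{T \setminus \upcl{t^*}}$ and hence $t^* \in \sigma(\eps) = \cC$ by the definition of $\sigma$ -- contradicting that $t^*$ is a top of $\cC$ and so is not in $\cC$.
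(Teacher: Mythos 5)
Your proof is correct and follows essentially the same route as the paper's: both rest on the countability of the high-ray $\sigma(\eps)$ together with the observation that no ray of $\eps$ can eventually lie in $\upcl{s}$ for $s$ a successor of a top of $\sigma(\eps)$, since the finite neighbourhood $N(\upcl{s})$ would then separate that ray from $\eps$. You package this as the pointwise statement that every ray in $\eps$ has its $T$-minimal vertex on $\sigma(\eps)$, whereas the paper runs the same contradiction through a counting argument on a hypothetical uncountable family of pairwise disjoint rays; the difference is cosmetic.
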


\begin{proof}
Let $G$ be a $T$-graph of finite adhesion, and let $\eps$ be an end of~$G$.
Put $\varrho:=\sigma(\eps)$, and assume for a contradiction that $\{\,R_i\mid i<\aleph_1\,\}$ is a collection of uncountably many pairwise disjoint rays in the end~$\eps$.
Since $\varrho$ has cofinality~$\omega$ and $G$ has finite adhesion, it follows that all but countably many rays~$R_i$ are included in~$G[\,\uc{U}\,]$ where $U$ is the set of all tops of~$\varrho$.
Pick any ray $R_i\subset G[\,\uc{U}\,]$.
Then $R_i$ has a unique $T$-minimal vertex by \cref{lem_Tgraphproperties}~\ref{itemT2}, so~$R_i$ has a tail~$R_i'$ which avoids~$U$.
In particular, there is a vertex $s\in T$ that is a successor of a top of~$\varrho$ and satisfies $R_i'\subset G[\,\upcl{s}\,]$.
But then the finite neighbourhood $N_G(\upcl{s})$ separates~$R_i'$ from~$\eps$, a contradiction.
\end{proof}

We now characterize convergent sequences $
\eps_n \to \eps$ for ends of a \tame\ $T$-graph in terms of combinatorial behaviour of their corresponding high rays in $T$.

\begin{lemma}
\label{lem_highraytop}
Let $G$ be a \tame\ $T$-graph. Let $\eps$ and $\eps_n$ $(n \in \N)$ be ends of $G$, and let $\varrho:=\sigma(\eps)$ and $\varrho_n:=\sigma(\eps_n)$ be the corresponding high-rays in $T$. Let $A\subset\N$ consist of all numbers~$n$ for which $\varrho \subsetneq \varrho_n$, and let $B:=\N\setminus A$.
\begin{enumerate}
    \item We have convergence $\eps_n \to \eps$ in~$\Omega(G)$ for $n \in A$ and $n\to\infty$ if and only if $A$ is infinite and for every top $t$ of $\varrho$ there are only finitely many $n \in A$ with $t \in \varrho_n$.
    \item We have convergence $\eps_n \to \eps$ in~$\Omega(G)$ for $n \in B$ and $n\to\infty$ if and only if $B$ is infinite and for every successor node $t \in \varrho$ there are only finitely many $n \in B$ with $\varrho \cap \varrho_n \subset \mathring{\dc{t}}$.
\end{enumerate}
\end{lemma}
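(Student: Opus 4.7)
The plan is to analyse the basic open neighbourhoods $\Omega(X,\eps)$ of $\eps$ in terms of $\varrho$, using uniformity of $G$ to control adhesion of sets of the form $\upcl{t}$ and $\underring{\upcl{t}}$. For part~(i), I write $m_n := \min(\varrho_n\setminus\varrho)$ for $n\in A$; each $m_n$ is a top of~$\varrho$, and since tops of $\varrho$ are pairwise incomparable in the tree order, the condition $t\in\varrho_n$ for a top $t$ of $\varrho$ is equivalent to $m_n=t$. For the forward direction of~(i), I argue by contradiction: if $m_n=t^*$ for infinitely many $n\in A$, then by uniformity the finite set $X:=\{t^*\}\cup S_{t^*}$ separates $\underring{\upcl{t^*}}$ from the rest of $G$. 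All these $\eps_n$ have their tails in $\underring{\upcl{t^*}}$, whereas $\eps$'s tails live in a cofinite tail of $\varrho$ disjoint from $\underring{\upcl{t^*}}$, so $C(X,\eps_n)\neq C(X,\eps)$ and $\eps_n\not\to\eps$. For the backward direction of~(i), given a finite $X$, the finite-to-one hypothesis together with the observation that each $x\in X$ has at most one top of $\varrho$ in $\dc{x}$ forces $m_n\notin X$ and $\upcl{m_n}\cap X=\emptyset$ for all but finitely many $n\in A$. For such $n$, $\upcl{m_n}$ remains connected in $G-X$, and a down-neighbour of $m_n$ in $\varrho$ above $\max(X\cap\varrho)$, which exists by Lemma~\ref{lem_downNeighboursOmegaChain}, links $\upcl{m_n}$ to the tail of $\varrho$ lying in $C(X,\eps)$; hence $\eps_n\in\Omega(X,\eps)$.

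For part~(ii), the key observation is that for each successor $t\in\varrho$ the set $X_t:=N_G(\upcl{t})$ is finite by Lemma~\ref{finiteAdhesionStrenghtening}\,(iii), and makes $\upcl{t}$ an isolated connected component of $G-X_t$, whence $\Omega(X_t,\eps)=\{\eta\in\Omega(G):t\in\varrho_\eta\}$. A direct tree-order check shows that for $t\in\varrho$ the condition $t\notin\varrho_n$ is equivalent to $\varrho\cap\varrho_n\subseteq\mathring{\dc{t}}$: any element of $\varrho\cap\varrho_n$ that is $\ge t$ would, by down-closure of $\varrho_n$, force $t\in\varrho_n$. Substituting $X=X_t$ in the convergence $\eps_n\to\eps$ along $B$ then immediately yields the forward direction of~(ii).

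The backward direction of~(ii) is the main technical obstacle, caused by the possibility that $X$ contains elements above all of $\varrho$; these form a finite set $X_a\subseteq X$ whose elements each lie in $\upcl{t_x}$ for a unique top $t_x$ of $\varrho$, and therefore lie in $\upcl{t^*}$ for every $t^*\in\varrho$. For every $x\in X\setminus X_a$, by contrast, any sufficiently high $t^*\in\varrho$ satisfies $x\notin\upcl{t^*}$. Since successors are cofinal in any ordinal of cofinality~$\omega$, I pick a successor $t^*\in\varrho$ high enough that $X\cap\upcl{t^*}=X_a$. For any $n\in B$ with $t^*\in\varrho_n$, some tail of a ray of $\eps_n$ lies in $\upcl{t^*}\setminus X_a$ and hence in a unique component $D$ of the induced subgraph $G[\upcl{t^*}\setminus X_a]$. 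I claim $\varrho\cap\upcl{t^*}\subseteq D$: otherwise the $T$-minimum $c$ of $D$ (Lemma~\ref{lem_Tgraphproperties}\,(ii)) lies strictly above $t^*$, and $c$ cannot be a limit, because its cofinal $\omega$-chain of down-neighbours in $\upcl{t^*}$ (Lemma~\ref{lem_downNeighboursOmegaChain}) would by minimality of~$c$ all have to lie in the finite set $X_a$. Hence $c$ is a successor whose predecessor $x$ lies in $X_a$, giving $D\subseteq\upcl{c}\subseteq\upcl{x}$. But then $\eps_n$'s rays lie eventually in $\upcl{c}$, which has finite $G$-neighbourhood by Lemma~\ref{finiteAdhesionStrenghtening}\,(iii), forcing $c\in\varrho_n$ and hence $x\in\varrho_n$ by down-closure; since $x>\varrho$ we obtain $\varrho\subsetneq\varrho_n$, placing $n$ in $A$ and contradicting $n\in B$. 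Thus $\varrho\cap\upcl{t^*}\subseteq D\subseteq C(X,\eps)$, yielding $\eps_n\in\Omega(X,\eps)$, and since by hypothesis $t^*\in\varrho_n$ for all but finitely many $n\in B$, convergence $\eps_n\to\eps$ along $B$ follows.
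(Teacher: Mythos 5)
Your proof is correct. For part~(i) and for the forward direction of part~(ii) you follow essentially the same route as the paper: the separator $S_{t^*}\cup\{t^*\}$ attached to a top $t^*$ of $\varrho$, the finite set of tops of $\varrho$ whose up-closures meet $X$, and the finite neighbourhood $N_G(\uc{t})$ of a successor $t\in\varrho$ are exactly the objects the paper's proof uses. (Your justification in (i) that $\eps$ is separated from the offending $\eps_n$ is phrased loosely --- rays of $\eps$ need not have tails \emph{on} $\varrho$ --- but the correct statement, namely that if $\eps$ lived in a component of $G-(\{t^*\}\cup S_{t^*})$ contained in $\uc{t^*}$ then $t^*\in\sigma(\eps)$ would follow, contradicting that $t^*$ is a top of $\varrho$, is immediate.) Where you genuinely depart from the paper is the backward direction of~(ii), and there your argument is more careful than the paper's own: the paper simply picks a successor $t\in\varrho$ with $X\subseteq\mathring{\dc{t}}$, but such a $t$ need not exist, since $X$ may contain vertices incomparable with $\varrho$ or lying above every node of $\varrho$. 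You isolate exactly the problematic vertices as the set $X_a$ of vertices above all of $\varrho$, and your observation that any $x\in X_a\cap\varrho_n$ would force $\varrho\subsetneq\varrho_n$ and hence $n\in A$ is precisely the point that makes the case $n\in B$ go through. Your detour through the $T$-minimal element of the component $D$ of $G[\uc{t^*}\setminus X_a]$ is sound but heavier than necessary: once $t^*$ is chosen with $X\cap\uc{t^*}=X_a$, it suffices to build rays of $\eps$ and of $\eps_n$ inside $\varrho\cap\uc{t^*}$ and $\varrho_n\cap\uc{t^*}$ respectively (both avoid $X$, the latter because $X_a\cap\varrho_n=\emptyset$ for $n\in B$) and to join them at~$t^*$ via the connected intervals of Lemma~\ref{lem_Tgraphproperties}; this is the repair that the paper's own ray construction is implicitly relying on.
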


\begin{proof}
(i) We show the forward implication indirectly.
For this, suppose that there is a top $t$ of~$\varrho$ such that $t\in\varrho_n$ for infinitely many~$n\in A$.
Then the finite vertex set $S_t\cup\{t\}$ separates~$\eps$ from infinitely many~$\eps_n$ at once, a contradiction.

For the backward implication, let $X\subset V(G)$ be any finite vertex set; we have to find a number $N\in\N$ such that $\eps_n$ lives in $C(X,\eps)$ for all~$n\in A$ with $n\ge N$.
Let $Y$ be the set of all tops $t$ of~$\varrho$ for which $\uc{t}$ meets~$X$.
By assumption, we find a large enough $N\in\N$ such that all high-rays $\varrho_n$ with $n\in A$ and $n\ge N$ avoid~$Y$.
We claim that all corresponding ends $\eps_n$ live in $C(X,\eps)$. For this, let any $n\in A$ with $n\ge N$ be given.
Let $t'_0<t'_1<\cdots$ be a cofinal $\omega$-chain in the high-ray $\varrho_n$ such that~$t'_0$ is a top of~$\varrho$.
Let $t_0<t_1<\cdots$ be a cofinal $\omega$-chain in the high-ray $\varrho$ such that $X$ has no vertex on~$\varrho$ above or equal to~$t_0$ and $t_0$ is a neighbour of~$t'_0$.
Using that the intervals $[t_k,t_{k+1}]$ and $[t'_k,t'_{k+1}]$ of the $T$-graph $G$ are connected for all $k\in\N$ by \cref{lem_Tgraphproperties}~\ref{itemT4}, we find rays $R\in\eps$ and $R_n\in\eps_n$ which start in $t_0$ and $t'_0$, respectively, and avoid~$X$.
Since $t_0 t'_0$ is an edge of~$G$, it follows that $\eps_n$ lives in~$C(X,\eps)$.

(ii) We show the forward implication indirectly.
For this, suppose that there is a successor node $t\in \varrho$ such that~$\varrho\cap\varrho_n\subset\mathring{\dc{t}}$ for infinitely many~$n\in B$.
Then $N_G(\uc{t})$ is a finite vertex set by \cref{finiteAdhesionStrenghtening}, and it separates infinitely many~$\eps_n$ from~$\eps$ simultaneously.

For the backward implication, let any finite vertex set $X\subset V(G)$ be given.
Let $t\in\varrho$ be any successor such that $X\subset V_{\mathring{\dc{t}}}$.
By assumption, we find a large enough number $N\in B$ such that $t\in\varrho_n$ for all~$n\ge N$ (with $n\in B$).
Let $t_0<t_1<\cdots$ be a cofinal $\omega$-chain in the high-ray~$\varrho$ with $t_0:=t$.
Since the intervals $[t_n,t_{n+1}]\subset\varrho$ are connected in~$G$ by \cref{lem_Tgraphproperties}~\ref{itemT4}, we find a ray $R\in\eps$ in $G-X$ which starts in~$t$.
Similarly, we find a ray $R_n\in\eps_n$ in $G-X$ which starts in~$t$, for every~$n\ge N$.
Hence $\eps_n\in\Omega(X,\eps)$ for all $n\ge N$.
\end{proof}

We now show that for \tame\ $T$-graphs, the topology described in \cref{lem_highraytop} is the topology of $\cR(T)$. The definition of $\cR(T)$ as a subspace of $\Set{0,1}^T$ in the introduction is equivalent to the assertion that the topology of $\cR(T)$ is the smallest topology in which all sets of the form $[t] = \set{x \in \cP(T)}:{t \in x}$ and their complements $[t]^\complement = \set{x \in \cP(T)}:{t \notin x}$ are open. 

\begin{lemma}
\label{lem_standardbase}
	For ray spaces, a local neighbourhood base $\cC(x)$ at some high-ray $x \in \cR(T)$ is given by sets of the form
$$[t,F] := [t] \setminus \bigcup_{s \in F} [s] = [t] \setminus [F] $$
where $t \in x$ and $F$ is a finite set of tops of $x$. 
\end{lemma}
\begin{proof}
	By definition of our subbase, every open set $U$ with $x \in U \subseteq \cR(T)$ contains a basic set $B$ of the form
$$x \in B = \bigcap_{t \in E} [t] \cap \bigcap_{s \in F} [s]^\complement= \bigcap_{t \in E} [t] \setminus \bigcup_{s \in F} [s] \subseteq U$$
for some finite sets $E,F \subset T$. Pick $B$ with $|E| + |F|$ of minimal size. Since $x \in B$ implies $E \subset x$, we could replace $E$ by its maximum. Hence, $|E| \leq 1$. % and $$x \in [t] \setminus \bigcup_{s \in F} [s] \subseteq U.$$
Next, we claim that for every $s\in F$ there is a top $s'$ of  $x$ with $s' \leq s$. Indeed, if say $\dc{s_0} \cap x \subsetneq x$, pick $t \in x \setminus (E \cup \dc{s_0})$ to obtain
$$x \in [t] \setminus \bigcup_{s \in F_0} [s] \subseteq B$$
for $F_0 = F \setminus \Set{s_0}$, contracting the minimality of $|E|+|F|$. Hence, for every $s\in F$ there is a top $s'$ of  $x$ with $s' \leq s$, and by setting $F' = \set{s'}:{s \in F}$, we obtain
\[x \in [t,F'] \subseteq B \subseteq U \subseteq \cR(T) \]
with $t \in x$ and $F'$ a finite set of tops of $x$ as desired.
\end{proof}

The following result yields the equivalence $(2) \Leftrightarrow (3)$ in our Representation Theorem~\ref{thm_rep}.
 
\begin{proposition}
\label{prop_specialendspaces}
%In particular, any two \tame\ $T$-graphs on the same order tree $T$ have homeomorphic end spaces naturally homeomorphic to $\cR(T)$.
The endspace of a \tame\ $T$-graph is naturally homeomorphic to $\cR(T)$.
\end{proposition}

\begin{proof}
%If $G$ and $G'$ are \tame\ $T$-graphs on the same order tree $T$, then  \cref{lem_highraytop} translates convergent sequences of ends in $\Omega(G)$ and $\Omega(G')$ into the same combinatorial principles of high rays of $T$. Since that description only depends on $T$ and not on $G$ and $G'$, it follows that both $f=(\sigma')^{-1}\circ\sigma$ 
%and its inverse $f^{-1}$ are sequentially continuous. Since end spaces are Fréchet-Urysohn, this means that $f$ is a homeomorphism between $\Omega(G)$ and~$\Omega(G')$.
Let $G$ be a \tame\ $T$-graph. Then $T$ is special by \cref{prop_specialtrees}. By \cref{lem_highraytop}, the end space $\Omega(G)$ is naturally homeomorphic to the topology $\tau_{seq}$ on $\cR(T)$ that is given by describing the convergent sequences as follows:
Let $x$ and $x_n$ ($n \in \N$) be high-rays in a special order tree~$T$. Let $A\subset\N$ consist of all numbers~$n$ for which $x \subsetneq x_n$, and let $B:=\N\setminus A$.
\begin{enumerate}
    \item We have convergence $x_n \to x$  for $n \in A$ and $n\to\infty$ if and only if $A$ is infinite and for every top $t$ of $x$ there are only finitely many $n \in A$ with $t \in x_n$.
    \item We have convergence $x_n \to x$  for $n \in B$ and $n\to\infty$ if and only if $B$ is infinite and for every node $t \in x$ there are only finitely many $n \in B$ with $x \cap x_n \subset \mathring{\dc{t}}$.
\end{enumerate}
Write $\tau$ for the standard topology on the ray space $\cR(T) \subseteq \Set{0,1}^T$. In order to verify that $\tau$ and $\tau_{seq}$ induce the same closed sets, given $X \subseteq \cR(T)$ we need to show that $x \in \closure{X}$ with respect to $\tau$ if and only if there is a sequence $(x_n) \subseteq X$ such that $x_n \to x$ in $\tau_{seq}$. 

For the forwards implication, suppose that $x \in \closure{X} \setminus X$. If there are infinitely many tops $s_n$ ($n \in \N$) of $x$ such that $[s_n] \cap X \neq \emptyset$ then select $x_n \in [s_n] \in X$ and note that $x_n \to x$ in $\tau_{seq}$ according to (1). Hence, we may suppose that there are only finitely many tops $F = \Set{s_1,\ldots, s_k}$ of $x$ such that $X \cap [s_i] \neq \emptyset$. Let $X' = X \setminus [F]$. Since $[F] \not\ni x$ is clopen, it follows that $x \in \closure{X'}$. Let $t_n$ be an increasing, cofinal sequence in $x$ (which exists since $T$ is special). 
Since $\bigcap_{n \in \N}[t_n] \cap X' = \emptyset$ but $[t_n] \cap X' \neq \emptyset$, we can choose pairwise distinct $x_n \in X' \cap [t_n]$, and we have convergence $x_n \to x$ in $\tau_{seq}$ according to (2).

For the backwards implication, suppose $x_n \to x$ in $\tau_{seq}$. We show that $x \in \closure{X}$ with respect to~$\tau$. Let $[t,F]$ be a standard basic open neighbourhood of $x$ where $t \in x$ and $F$ a finite set of tops of~$x$. If $x_n \to x$ according to (1), then only finitely many members of $\set{x_n}:{n \in \N}$ lie above $F$, so infinitely many $x_n$ belong to $[t,F]$.  If $x_n \to x$ according to (2), then only finitely many members of $x_n$ do not contain $t$, so infinitely many $x_n$ belong to $[t,F]$. 
\end{proof}

\begin{prob}
Find a combinatorial characterisation when two special trees represent the same end space.
\end{prob}

\section{High-rays and \pt s that display ends}\label{sec:highrays}
\label{sec_5}

Inspired by the normal partition trees from \cite{brochet1994normal}, in this section we introduce our main structuring tool called `partition tree', and discuss how it helps in capturing the end space of a graph.

If $\cV=(\,V_t\colon t\in T\,)$ is a partition of~$V(G)$ whose classes are indexed by the points of an order tree~$T$, then for every subset $T'\subset T$ we write $V_{T'}:=\bigcup\,\{\,V_t\mid t\in T'\,\}$.
Sometimes, when $T'$ is given by a long formula, we will write $V\rest T':=V_{T'}$ instead.

\begin{definition}\label{def_partition_tree}
A \emph{\pt } of~$G$ is a pair $(T,\cV)$ where $T$ is an order tree and $\cV=(\,V_t\colon t\in T\,)$ is a partition of~$V(G)$ into connected vertex sets~$V_t$ (also called \emph{parts}) such that the following conditions hold:
\begin{enumerate}[label=(PT\arabic*)]
    \item\label{pt1} The contraction minor $\dot{G}:=G/\cV$ (with all arising parallel edges and loops deleted) is a $T$-graph.
    \item\label{pt2} $(T,\cV)$ has \emph{finite adhesion} in that $N_G(V_{\uc{t}})$ is finite for all successors $t\in T$.
    \item\label{pt3} All parts $V_t$ with $t$ not a limit are required to be singletons.
\end{enumerate}
\end{definition}
The first condition implies that only connected graphs can have \pt s.
The second condition implies by \cref{finiteAdhesionStrenghtening}~(iii) that the $T$-graph $\dot{G}$ has finite adhesion, but note that the converse is false: if $t\in T$ is a successor of a limit~$\ell$, then~\ref{pt2} forbids that the sole vertex in~$V_t$ (cf.~\ref{pt1}) has infinitely many neighbours in~$V_\ell$, but in~$\dot{G}$ all these neighbours would collapse to a single vertex. However, if $G$ is already a $T$-graph of finite adhesion, then $T$ defines a \pt\ $(T,\cV)$ of~$G$ if we let $V_t:=\{t\}$ for all~$t\in T$.

\begin{lemma}\label{lem_npt_countablebranches}
If $(T,\cV)$ is a \pt\ of a graph~$G$, then all branches of $T$ are countable.
\end{lemma}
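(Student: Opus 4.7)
The plan is to reduce the claim immediately to Corollary~\ref{cor_countablebranches}, which asserts that every $T$-graph of finite adhesion has only countable branches. By \ref{pt1}, the contraction minor $\dot{G} = G/\cV$ is a $T$-graph on the same order tree $T$, so it suffices to verify that $\dot{G}$ has finite adhesion in the sense of Lemma~\ref{finiteAdhesionStrenghtening}, and then the corollary gives the conclusion.

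To show $\dot{G}$ has finite adhesion, by Lemma~\ref{finiteAdhesionStrenghtening}(iii) it suffices to verify that $N_{\dot{G}}(\uc{t}_T)$ is finite for every successor $t \in T$. Fix such a $t$ and consider any $u \in T \setminus \uc{t}$ that is a neighbour of $\uc{t}$ in $\dot{G}$. Since $\dot{G}$ is a $T$-graph, edges run only between comparable nodes, so $u$ is comparable to some $s \ge t$; as $u \not\ge t$ this forces $u < t$. The edge of $\dot{G}$ corresponds to an edge of $G$ between some vertex in $V_u$ and some vertex in $V_{\uc{t}}$, which contributes at least one vertex of $V_u$ to $N_G(V_{\uc{t}})$. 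Since the parts $V_u$ are pairwise disjoint, distinct neighbours $u$ of $\uc{t}$ in $\dot{G}$ contribute distinct vertices to $N_G(V_{\uc{t}})$. Condition~\ref{pt2} then bounds the number of such $u$ by $|N_G(V_{\uc{t}})| < \infty$.

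Hence $\dot{G}$ is a $T$-graph of finite adhesion, and Corollary~\ref{cor_countablebranches} yields that all branches of $T$ are countable. There is no real obstacle here; the only point that wants a brief check is the translation between the combinatorial finite adhesion of $(T,\cV)$ inside $G$ (as in \ref{pt2}) and the graph-theoretic finite adhesion of the contraction minor $\dot{G}$, and this is essentially the observation already made in the remark following the definition of a partition tree.
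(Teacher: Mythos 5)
Your proof is correct and follows the same route as the paper: reduce to Corollary~\ref{cor_countablebranches} via the observation (already noted in the remark after the definition of a partition tree) that \ref{pt2} makes $\dot{G}$ a $T$-graph of finite adhesion. The paper states this in one line, while you spell out the routine verification via Lemma~\ref{finiteAdhesionStrenghtening}(iii); both are fine.
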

\begin{proof}
Since $(T,\cV)$ has finite adhesion, so does the $T$-graph $\dot{G}$, and we may apply \cref{cor_countablebranches} to $\dot{G}$.
\end{proof}

Recall more generally from the previous section that we already know exactly how the end space of the $T$-graph $\dot G$ looks like. 
%\red{In the remainder of this section we discuss which subset of ends in $G$ is captured by the end structure of $\dot G$.}\blue{Is this even true?}
We now investigate the relationship between the ends of $G$ and the ends of $\dot{G}$.

Let $G$ be any graph, and let $(T,\cV)$ be any \pt\ of~$G$.
Each $t\in T$ induces a bipartition $\{\,V_{\uc{t}},V_{T\setminus\uc{t}}\,\}$ of~$V(G)$, hence $\Omega(G)=\Abs{V_{\uc{t}}}\cup \Abs{V_{T\setminus\uc{t}}}$
follows for all~$t$.
For every end $\eps$ of~$G$ let us put
\[
    O(\eps):=\{\,t\in T\mid \eps\in\Abs{V_{\uc{t}}}\setminus \Abs{V_{T\setminus\uc{t}}}\,\}.
\]
Clearly, $O(\eps)$ is a non-empty down-closed chain in~$T$, and it is countable because all branches of~$T$ are countable by \cref{lem_npt_countablebranches}.
If $O(\eps)$ has no maximal element, then~$O(\eps)$ is a high-ray of $T$ and we say that $\eps$ \emph{corresponds} to this high-ray.
Otherwise $O(\eps)$ has a maximal element~$t$. Then we say that $\eps$ \emph{lives at}~$t$ and \emph{in} the part~$V_t$.

\begin{lemma}\label{lem:livingInPoints}
If $O(\eps)$ has a maximal element~$t$, then $t$ must be a limit such that every ray in~$\eps$ has a tail in~$G[V_{\uc{t}}]$ and meets~$V_t$ infinitely often.
\end{lemma}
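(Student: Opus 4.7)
The plan is to derive all three conclusions---the tail property, the infinite-meeting property, and the limit character of $t$---through a single contradiction argument based on the maximality of $t$ in $O(\eps)$. First, I would note that every ray in $\eps$ has a tail in $G[V_{\uc{t}}]$ directly from $\eps \notin \Abs{V_{T\setminus\uc{t}}}$: pick a finite $X \subset V(G)$ with $C(X,\eps) \cap V_{T\setminus\uc{t}} = \emptyset$; since the parts partition $V(G)$, this forces $C(X,\eps) \subset V_{\uc{t}}$, so every ray in $\eps$ has a tail in $C(X,\eps)$.

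The heart of the proof is a contradiction argument for the infinite-meeting property. Suppose some ray $R \in \eps$ meets $V_t$ only finitely often. Combined with the first step, a tail $R^*$ of $R$ lies in $V_{\uc{t}} \setminus V_t = V_{\underring{\uc{t}}} = \bigsqcup_{t'} V_{\uc{t'}}$, where the union ranges over the children $t'$ of $t$. The key structural observation is that there are no $G$-edges between $V_{\uc{t'}}$ and $V_{\uc{t''}}$ for distinct children $t',t''$ of $t$: such an edge would give an edge of the $T$-graph $\dot G$ between two nodes $s \geq t'$ and $s' \geq t''$, but any such $s,s'$ must be incomparable in $T$ (if, say, $s \leq s'$, then $\dc{s'}$ would contain both $t'$ and $t''$, which are incomparable as distinct children of $t$, contradicting the chain property of $\dc{s'}$). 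Since each $V_{\uc{t'}}$ is connected in $G$ by Lemma~\ref{lem_Tgraphproperties}, the components of $G[V_{\underring{\uc{t}}}]$ are exactly the sets $V_{\uc{t'}}$, and the connected $R^*$ lies in a single one of them, say $V_{\uc{t'}}$. As $t'$ is a successor, (PT2) makes $X' := N_G(V_{\uc{t'}})$ finite, so $C(X',\eps) \subset V_{\uc{t'}}$, yielding $\eps \in \Abs{V_{\uc{t'}}} \setminus \Abs{V_{T\setminus\uc{t'}}}$, i.e.\ $t' \in O(\eps)$ with $t' > t$, contradicting the maximality of $t$.

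The limit character of $t$ then follows at once: if $t$ were a successor, (PT3) would force $V_t$ to be a singleton, and no ray could meet a single vertex infinitely often, contradicting the infinite-meeting property. The main technical hurdle is the structural observation in the middle paragraph---that distinct branches $V_{\uc{t'}}$ are disconnected within $G[V_{\underring{\uc{t}}}]$---which crucially exploits that the contraction $\dot G$ is a $T$-graph and therefore has edges only between comparable nodes of~$T$.
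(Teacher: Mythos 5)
Your proof is correct and follows essentially the same route as the paper's: derive the tail property from $\eps\notin\Abs{V_{T\setminus\uc{t}}}$, use that $V_t$ separates the up-closures $V_{\uc{t'}}$ of the successors $t'$ of $t$ inside $G[V_{\uc{t}}]$ (which you justify, correctly, via normality of the $T$-graph $\dot G$), and use finite adhesion \ref{pt2} together with maximality of $t$ in $O(\eps)$ to rule out a tail escaping into a single $V_{\uc{t'}}$; the paper merely organizes this contrapositively, first showing each $V_{\uc{t'}}$ is met finitely often and then invoking the separation. Your final step deducing that $t$ is a limit from \ref{pt3} matches the paper's as well.
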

\begin{proof}
Since $t$ is contained in~$O(\eps)$, the end~$\eps$ does not lie in the closure of~$V_{T\setminus\uc{t}}$, i.e., every ray in~$\eps$ has a tail in~$G[V_{\uc{t}}]$.
Let $s_i$ ($i\in I$) be the successors of~$t$ in~$T$.
Since no~$s_i$ is contained in~$O(\eps)$ and $(T,\cV)$ has finite adhesion, every ray in~$\eps$ meets each vertex set $V_{\uc{s_i}}$ only finitely often.
As $V_t$ pairwise separates all vertex sets $V_{\uc{s_i}}$ in the subgraph~$G[V_{\uc{t}}]$ which contains a tail of every ray in~$\eps$, it follows that every ray in~$\eps$ meets~$V_t$ infinitely often.
In particular, $V_t$ is infinite, so $t$ must be a limit.
\end{proof}

Consider the map $\tau\colon\Omega(G)\to \HR(T)\sqcup T$ that takes each end of~$G$ to the high-ray or point of~$T$ which it corresponds to or lives at, respectively.
We say that $(T,\cV)$ \emph{displays} a set $\Psi$ of ends of~$G$ if $\tau$ restricts to a bijection $\tau\rest\Psi\colon\Psi\to\HR(T)$ between $\Psi$ and the high-rays of~$T$ and maps every end that is not contained in~$\Psi$ to some point of~$T$.

\begin{lemma}\label{lem:findRayForHighRay}
Let $(T,\cV)$ be a \pt\ of~$G$ with a high-ray $\varrho\subset T$, let $u_0,u_1,\ldots$ be vertices of~$G$, and let $t_0<t_1<\cdots$ be a cofinal $\omega$-chain in~$\varrho$ such that $u_n\in V_{t_n}$ for all $n\in\N$.
Then $G[\,V\rest\{t\in\varrho\colon t\ge t_0\}\,]$ contains a comb attached to~$\{\,u_n\mid n\in\N\,\}$.
Moreover, the spine of this comb belongs to an end of~$G$ which corresponds to~$\varrho$.
\end{lemma}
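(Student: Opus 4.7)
The plan is to glue together paths between consecutive $u_n$'s into an infinite walk $W$, package the subgraph of $G$ that $W$ traces as a connected and locally finite graph $H$, and then invoke the star-comb lemma on $H$ to extract the desired comb directly. The key insight is that local finiteness rules out the star case of the dichotomy.

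First, using \ref{pt1} and Lemma~\ref{lem_Tgraphproperties}~\ref{itemT4} applied to the $T$-graph $\dot{G}$, each interval $[t_n, t_{n+1}]$ is connected in $\dot{G}$. Since each part $V_t$ is connected in $G$, this lifts to $G[V_{[t_n, t_{n+1}]}]$ being connected in $G$, so I can pick a $u_n$--$u_{n+1}$ path $P_n$ inside this subgraph. Letting $H\subseteq G$ consist of the vertices and edges used by the concatenated walk $W = P_0 P_1 P_2 \cdots$, cofinality of $(t_n)$ in $\varrho$ together with down-closure of $\varrho$ yields $\bigcup_n [t_n, t_{n+1}] = \{t \in \varrho : t \geq t_0\}$, so $H$ is a connected subgraph of $G[\,V\rest\{t \in \varrho : t \geq t_0\}\,]$.

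Next, I would show $H$ is locally finite: any vertex $v \in V(H)$ lies in a unique part~$V_t$, and can appear in $V(P_n) \subseteq V_{[t_n, t_{n+1}]}$ only when $t_n \leq t \leq t_{n+1}$, which holds for at most two indices~$n$ since $(t_n)$ is strictly increasing. As each $P_n$ is a simple path, $v$ has at most four neighbours in~$H$. Applying Lemma~\ref{StarCombLemma} to~$H$ with attachment set $U := \{u_n : n \in \N\}$ (which is infinite, as the $V_{t_n}$ are pairwise disjoint parts of~$\cV$), a star attached to~$U$ would force its centre to have infinite degree in~$H$, contradicting local finiteness. Hence there is a comb attached to~$U$ inside~$H$, yielding the required comb.

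For the moreover-part, letting $R$ be the spine of that comb, with teeth $u_{n_k}$ for indices $n_0 < n_1 < \cdots$, and $\eps$ the end of $G$ containing~$R$, I would verify $O(\eps) = \varrho$. For $t \in \varrho$ the subsequence $(t_{n_k})$ is still cofinal in~$\varrho$, so $u_{n_k} \in V_{\uc{t}}$ for all sufficiently large~$k$, placing infinitely many vertices of~$R$ in $V_{\uc{t}}$; on the other hand $V(R) \subseteq V(H) \subseteq V_\varrho$, and $V(H) \cap V_{\mathring{\dc{t}}}$ is contained in the union of the finitely many finite sets $V(P_n)$ with $t_n < t$ (finite because $(t_n)$ is cofinal in $\varrho$ and $t \in \varrho$), so $R$ has a tail in $V_{\uc{t}}$; hence $t \in O(\eps)$. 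For $t \notin \varrho$, down-closure of~$\varrho$ forces $\uc{t} \cap \varrho = \emptyset$, so $R\subseteq V_\varrho$ avoids $V_{\uc{t}}$ entirely and $t \notin O(\eps)$.

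The main obstacle I anticipate is the temptation to reduce~$W$ to a ray by a direct walk-reduction argument: such reductions give little control over which $u_n$'s are retained on the resulting ray, nor over which end it belongs to. The trick is to package $W$ as a locally finite subgraph~$H$ of~$G$ and defer the ray-plus-teeth construction to the star-comb lemma, which automatically produces the spine and the teeth at the required vertices in one go.
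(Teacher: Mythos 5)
Your construction of the comb itself is correct and takes a genuinely different, self-contained route from the paper's: the paper finds a ray $R$ with $V(R)\subset\varrho$ in the $T$-graph $\dot{G}$ using connectedness of the intervals $[t_n,t_{n+1}]$ (Lemma~\ref{lem_Tgraphproperties}~\ref{itemT4}) and then lifts it to a comb in $G$ via connectedness of the parts, whereas you concatenate $u_n$--$u_{n+1}$ paths into a connected, locally finite auxiliary subgraph $H$ and let the star--comb lemma produce spine and teeth in one step, the degree bound ruling out the star alternative. Your local-finiteness count (each vertex lies in at most two of the sets $V_{[t_n,t_{n+1}]}$, hence has degree at most four in $H$) is correct.

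The gap is in the moreover-part. By definition, $t\in O(\eps)$ requires \emph{both} $\eps\in\Abs{V_{\uc{t}}}$ \emph{and} $\eps\notin\Abs{V_{T\setminus\uc{t}}}$. For $t\in\varrho$ you only establish the first (a tail of $R$ lies in $G[V_{\uc{t}}]$); a ray having a tail inside a vertex set does not by itself prevent its end from lying in the closure of the complement, and ruling this out is exactly what finite adhesion \ref{pt2} is for --- which your proof never invokes. Without it $O(\eps)$ could be a proper down-closed subset of $\varrho$, and then $\eps$ would not correspond to $\varrho$. The repair is short: every $t\in\varrho$ has a successor $s$ in $T$ with $s\in\varrho$ (as $\varrho$ is a down-closed chain whose order type has cofinality $\omega$, hence has no maximum), the set $X:=N_G(V_{\uc{s}})$ is finite by \ref{pt2}, and since $R$ has a tail in $G[V_{\uc{s}}]$ the component $C(X,\eps)$ is contained in $V_{\uc{s}}\subset V_{\uc{t}}$, so $\eps\notin\Abs{V_{T\setminus\uc{t}}}$. (A minor imprecision besides: the teeth $u_{n_k}$ need not lie on the spine $R$, so they do not literally place ``infinitely many vertices of $R$'' in $V_{\uc{t}}$; your separate tail argument makes that clause unnecessary anyway.)
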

\begin{proof}
Since $\dot{G}$ is a $T$-graph, each interval $[t_n,t_{n+1}]$ is connected in it by \cref{lem_Tgraphproperties}~\ref{itemT4}, so in particular it contains a ray $R$ with $V(R)\subset\varrho$ that traverses all $t_n$ in the correct order.
Using that the induced subgraphs $G[V_t]$ with $t\in T$ are connected, it is straightforward to construct the desired comb from~$R$. 
\end{proof}

\begin{lemma}\label{lem:uniqueLimit}
Let $(T,\cV)$ be a \pt\ of~$G$, and let $\eps$ be any end of~$G$ that corresponds to a high-ray~$\varrho\subset T$.
Then
\[\bigcap_{t\in\varrho}\Abs\,\big(\,\medcup\,\{\,V_s\mid s\in\varrho\text{ and }s\ge t\,\}\,\big)=\{\eps\}.\]
In particular, no other end of~$G$ corresponds to~$\varrho$.
\end{lemma}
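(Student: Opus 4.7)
The plan is to establish both inclusions of the set equality, and then to derive the ``in particular'' clause as a one-line corollary. I will abbreviate $W_t := \bigcup\{V_s \colon s\in\varrho,\, s\ge t\}$, and fix throughout a cofinal $\omega$-chain $t_0<t_1<\cdots$ in~$\varrho$.

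For the first step, showing $\eps\in\Abs{W_t}$ for every $t\in\varrho$, I fix such~$t$ and an arbitrary finite vertex set $X\subset V(G)$. Since each $t_i\in O(\eps)=\varrho$ yields $\eps\notin\Abs{V_{T\setminus\uc{t_i}}}$, I can pick finite $X_i$ with $C(X\cup X_i,\eps)\subset V_{\uc{t_i}}$, and fix some $i$ with $t_i\ge t$. With $Y:=X\cup X_i$, the image $I\subset T$ of $C(Y,\eps)$ under the partition~$\cV$ induces a connected subgraph of the $T$-graph~$\dot{G}$, so by Lemma~\ref{lem_Tgraphproperties}~\ref{itemT2} it has a unique minimal element~$m$. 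The heart of this step is the claim that $m\in\varrho$. Since $\eps\in\Abs{V_{\uc{t_j}}}$ for all~$j$, the set $I$ meets $\uc{t_j}$ for every~$j$, so $m$ is comparable with every~$t_j$; if $m\le t_j$ for some $j>i$, then $t_i\le m\le t_j$ forces $m\in [t_i,t_j]_T\subset\dc{t_j}\subset\varrho$. The only alternative is $m>t_j$ for every~$j$, but then $C(Y,\eps)\subset V_{\uc{m}}$ would give $m\in O(\eps)=\varrho$, contradicting that $m$ would sit strictly above the entire cofinal chain. So $m\in\varrho_{\ge t}$, and any vertex of $V_m\cap C(Y,\eps)$ serves as the required witness in $W_t\cap C(X,\eps)$.

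For the reverse inclusion, I suppose for contradiction that some $\eps'\in\bigcap_{t\in\varrho}\Abs{W_t}$ satisfies $\eps'\neq\eps$, and pick a finite $X$ with $C(X,\eps)\neq C(X,\eps')$. The crucial observation here is that each $W_{t_i}$ is connected in~$G$: every interval $[t_j,t_{j+1}]_T$ lies inside the down-closed chain~$\varrho$ and is connected in~$\dot{G}$ by Lemma~\ref{lem_Tgraphproperties}~\ref{itemT4}, hence $\dot{G}[\varrho_{\ge t_i}]$ is connected, and lifting through the connected parts~$V_s$ makes $G[W_{t_i}]$ connected. By hypothesis $W_{t_i}$ meets both $C(X,\eps)$ and $C(X,\eps')$, so its connectedness in~$G$ forces $W_{t_i}\cap X\neq\emptyset$ for every~$i$. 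However, $X$ meets only finitely many parts, so $\{s\in\varrho\colon V_s\cap X\neq\emptyset\}$ is a finite subset of~$\varrho$, bounded above by some~$t_k$; this yields $W_{t_i}\cap X=\emptyset$ whenever $i>k$, the desired contradiction.

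I expect the main subtlety to lie in the case analysis of the first step, pinning down the projected minimum~$m$ inside~$\varrho$; ruling out the ``runaway'' case $m>\varrho$ is precisely where the full strength of $O(\eps)=\varrho$ (rather than merely $\varrho\subset O(\eps)$) is used, by feeding it back through the characterisation of $O(\eps)$ applied to~$m$ itself. The ``in particular'' clause is then immediate: any end $\eps'$ corresponding to~$\varrho$ satisfies $\eps'\in\Abs{W_t}$ for all $t\in\varrho$ by the first step applied to~$\eps'$, so the uniqueness established in the second step forces $\eps'=\eps$.
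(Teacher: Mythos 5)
Your proof is correct, but it reaches both inclusions by a genuinely different route than the paper. For the containment $\eps\in\Abs{W_t}$, the paper argues by contradiction: a ray of~$\eps$ avoiding $W_t$ must eventually live in $V_{\upcl{r}}$ for a top $r$ of~$\varrho$, and since $r\notin O(\eps)$ that ray extends to a comb attached to $V_{T\setminus\upcl{r}}$ whose connecting paths are forced through $W_t$ by finite adhesion. You instead argue directly, projecting the component $C(Y,\eps)$ into $\dot G$, invoking Lemma~\ref{lem_Tgraphproperties}~\ref{itemT2} to find its unique minimal node~$m$, and pinning $m$ onto $\varrho_{\ge t}$ by a case analysis; this bypasses the comb construction and the finite-adhesion step entirely, at the cost of the slightly delicate comparability argument for~$m$ (which you carry out correctly, including ruling out the case $m$ above the whole cofinal chain via $O(\eps)=\varrho$). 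For uniqueness, the paper extends rays $R\in\eps$ and $S\in\delta$ to combs with teeth in the tails $X_n$ and uses the connected intervals $[t_n,t_{n+1}]$ to produce infinitely many disjoint connecting paths, showing equivalence positively; you instead observe that each tail $W_{t_i}$ induces a connected subgraph of~$G$ (via Lemma~\ref{lem_Tgraphproperties}~\ref{itemT4} and connectedness of the parts), so if it met two distinct components of $G-X$ it would have to meet the finite set~$X$, which fails for $i$ large since $X$ meets only finitely many parts. Your separator argument is arguably more elementary and shorter than the paper's path-linkage construction; the paper's version has the advantage of exhibiting the equivalence of the two rays explicitly. Both halves of your argument are sound, and the ``in particular'' clause follows exactly as you say.
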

\begin{proof}
First, we show that $\eps$ is contained in the intersection on the left side of the equation.
For this, let any $t\in\varrho$ be given and put $X:=\bigcup\,\{\,V_s\mid s\in\varrho\text{ and }s\ge t\,\}$.
Assume for a contradiction that $\eps$ is not contained in~$\Abs{X}$.
Then $\eps$ contains a ray $R$ which avoids~$X$. Since $t \in \varrho$ implies $\eps \notin \Abs{V_{T\setminus\uc{t}}}$, a tail (so without loss of generality all) of $R$ avoids $V_\varrho$. 
By \cref{lem_Tgraphproperties}~\ref{itemT3}, the components of $G-V_{\varrho}$ are of the form $V_{\upcl{r}}$ for $r$ minimal in $T-\varrho$, and since $\eps$ corresponds to~$\varrho$, the component of $G-V_{\varrho}$ containing $R$ has to be of the form $G[V_{\upcl{r}}]$ for $r$ a top of $\varrho$.

This last statement implies $\eps\in\Abs{V_{\upcl{r}}}$. 
By definition of $O(\eps) = \varrho$, the fact $r\notin\varrho$ implies $\eps\in\Abs{V_{T\setminus \upcl{r}}}$, and so we can extend $R$ to a comb in~$G$ which is attached to $V_{T\setminus \upcl{r}}$.
However, since $\dot{G}$ is a $T$-graph and $(T,\cV)$ has finite adhesion, it follows 
that all but finitely many of the paths that have been added to~$R$ in order to obtain this comb must pass through~$X$, which gives $\eps\in\Abs{X}$, a contradiction.

It remains to show the forward inclusion of the equation.
For this, let $\delta$ be an element of the intersection on the left.
We have just shown that~$\eps$ is an element of this intersection as well.
Pick rays $R\in\eps$ and $S\in\delta$.
We show that $R$ and $S$ are equivalent in~$G$.
Let $t_0<t_1<\cdots$ be a cofinal $\omega$-chain in the high-ray~$\varrho$, and put $X_n:=\bigcup\,\{\,V_s\mid s\in\varrho,s\ge t_n\,\}$.
We extend both $R$ and $S$ to combs $C_R$ and $C_S$ in~$G$ with teeth $u_R^n$ and $u_S^n$ in $X_n$ for each $n\in\N$, respectively.
Using that all intervals $[t_n,t_{n+1}]$ of the $T$-graph~$\dot{G}$ are connected by \cref{lem_Tgraphproperties}~\ref{itemT4}, we find infinitely many pairwise disjoint $\{\,u_R^n\mid n\in\N\,\}$--$\{\,u_S^n\mid n\in\N\,\}$ paths in~$G$, which shows that $R$ and $S$ are equivalent as desired.
\end{proof}

\begin{lemma}\label{lem:displays}
Every \pt\ $(T,\cV)$ of~$G$ displays the ends of~$G$ that correspond to the high-rays of~$T$.
\end{lemma}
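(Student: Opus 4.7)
The plan is to unpack the definition of ``displays'' into two requirements and dispatch each with one of the preceding lemmas. Let $\Psi$ denote the set of all ends $\eps$ of $G$ that correspond to some high-ray of $T$. I must show (a)~every end not in $\Psi$ is sent by $\tau$ to some point of $T$, and (b)~the restriction $\tau\rest\Psi\colon\Psi\to\HR(T)$ is a bijection.

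Requirement (a) is essentially a bookkeeping observation. By the discussion preceding Lemma~\ref{lem:findRayForHighRay}, for every end $\eps$ of $G$ the set $O(\eps)$ is a non-empty down-closed countable chain in $T$. Either $O(\eps)$ has no maximal element, in which case it has countable cofinality and therefore is a high-ray (so $\eps\in\Psi$ and $\tau(\eps)\in\HR(T)$), or else $O(\eps)$ has a maximum~$t$, and then $\eps\notin\Psi$ and $\tau(\eps)=t\in T$ by definition. So $\tau$ is well-defined into $\HR(T)\sqcup T$, and the ends outside $\Psi$ all land in $T$.

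For requirement (b), injectivity of $\tau\rest\Psi$ is immediate from Lemma~\ref{lem:uniqueLimit}: if two ends $\eps,\eps'\in\Psi$ both correspond to the same high-ray $\varrho$, the lemma forces $\eps=\eps'$. For surjectivity, given an arbitrary high-ray $\varrho\subset T$ I choose a cofinal $\omega$-chain $t_0<t_1<\cdots$ in $\varrho$ (which exists precisely because $\varrho$ has cofinality~$\omega$), pick arbitrary vertices $u_n\in V_{t_n}$, and feed these data into Lemma~\ref{lem:findRayForHighRay}. That lemma produces a comb whose spine lies in an end of $G$ that corresponds to $\varrho$, giving a preimage of $\varrho$ under $\tau\rest\Psi$.

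I do not expect any genuine obstacle here: the conceptual content has already been absorbed into Lemmas~\ref{lem:findRayForHighRay} and~\ref{lem:uniqueLimit}, and the present statement just assembles them. The only point that requires a moment of care is verifying that the ends $\eps\in\Psi$ are exactly those with $O(\eps)\in\HR(T)$ (rather than with $O(\eps)$ merely a chain of cofinality~$\omega$ inside some longer chain), but this is built into the definition of $\tau$ and the dichotomy above.
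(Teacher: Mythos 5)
Your proof is correct and takes essentially the same route as the paper: the paper's own two-line argument invokes Lemma~\ref{lem:findRayForHighRay} for existence and Lemma~\ref{lem:uniqueLimit} for uniqueness of the end corresponding to each high-ray, exactly as you do. Your additional bookkeeping for the ends outside $\Psi$ is implicit in the paper's definition of $\tau$ and is harmless to spell out.
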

\begin{proof}
Every high-ray of~$T$ has some ends of~$G$ corresponding to it by \cref{lem:findRayForHighRay}.
Every high-ray of~$T$ has at most one end of~$G$ corresponding to it by \cref{lem:uniqueLimit}.
\end{proof}

\begin{lemma}\label{lem:PTtopNbhdClosure}
Let $(T,\cV)$ be a \pt\ of~$G$ and $t\in T$ a limit.
Then $N(V_{\uc{t}})$ is concentrated in the unique end of~$G$ that corresponds to the high-ray~$\mathring{\dc{t}}$.
\end{lemma}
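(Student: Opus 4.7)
The plan is to first identify the distinguished end $\eps$. Since $t$ is a limit, $\mathring{\dc{t}}$ has limit order type and, by Lemma~\ref{lem_npt_countablebranches}, is countable, so it has cofinality $\omega$: it is a high-ray of $T$. By Lemma~\ref{lem:displays}, there is a unique end $\eps$ of $G$ with $O(\eps) = \mathring{\dc{t}}$. The task then splits into (A) showing $\Abs{N(V_{\uc{t}})} = \{\eps\}$ and (B) showing that for every finite $X \subset V(G)$, only finitely many vertices of $N(V_{\uc{t}})$ lie outside $C(X, \eps)$.

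For the inclusion $\eps \in \Abs{N(V_{\uc{t}})}$ (half of~(A)), I would apply Lemma~\ref{lem_downNeighboursOmegaChain} to obtain a cofinal $\omega$-chain $t_0 < t_1 < \cdots$ of down-neighbours of~$t$ in $\dot G$ and pick for each $n$ a vertex $u_n \in V_{t_n}$ adjacent in $G$ to some vertex of $V_t$; these all lie in $N(V_{\uc{t}})$. Lemma~\ref{lem:findRayForHighRay} then supplies a comb attached to $\{u_n : n \in \N\}$ whose spine lives in an end corresponding to $\mathring{\dc{t}}$, and by Lemma~\ref{lem:uniqueLimit} this end must be $\eps$.

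The heart of the argument is~(B). Given a finite $X$, I would exploit that $T$-successors are cofinal inside any nonzero limit ordinal to choose a $T$-successor $s^* \in \mathring{\dc{t}}$ lying above every $s \in \mathring{\dc{t}}$ with $V_s \cap X \neq \emptyset$; such $s^*$ exists because $X$ meets only finitely many parts. By condition~(PT2), $N_G(V_{\uc{s^*}})$ is then finite. Now any $v \in N(V_{\uc{t}})$ lies in some $V_s$ with $s \in \mathring{\dc{t}}$, because the $T$-graph adjacency in $\dot G$ forces $s < t$; and $s$ is comparable to $s^*$ inside the chain $\dc{t}$. If $s < s^*$, then $v \notin V_{\uc{s^*}}$ while $v$ has a neighbour in $V_{\uc{t}} \subset V_{\uc{s^*}}$, so $v \in N_G(V_{\uc{s^*}})$: only finitely many such~$v$. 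If $s \geq s^*$, then $s \in [s^*, t)$ and $V_s \cap X = \emptyset$; applying Lemma~\ref{lem:findRayForHighRay} to a tail of $(u_n)$ with $t_m \geq s$ gives a ray $R' \in \eps$ confined to parts indexed by $[t_m, t) \subseteq [s^*, t)$, hence $R'$ avoids~$X$ and sits in $C(X,\eps)$. The interval $[s, t_m] \subset \dc{t}$ is connected in $\dot G$ by Lemma~\ref{lem_Tgraphproperties}\,\ref{itemT4}, and using the connectedness of the parts (all of which miss $X$) this lifts to a $v$--$u_m$ path in $G - X$, placing $v$ in $C(X,\eps)$.

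The uniqueness half of~(A) then follows from~(B): if some $\eta \neq \eps$ lay in $\Abs{N(V_{\uc{t}})}$, separating $\eta$ from~$\eps$ by a finite~$X$ would force infinitely many vertices of $N(V_{\uc{t}})$ into $C(X, \eta) \neq C(X, \eps)$, contradicting~(B). I expect the main difficulty to be arranging $s^*$ to satisfy three constraints simultaneously: being a $T$-successor (so that~(PT2) can bite), lying above every part of $\mathring{\dc{t}}$ that meets~$X$ (so that $[s^*, t)$ is entirely $X$-free), and strictly below~$t$ (so that $s^*$ actually sits in $\mathring{\dc{t}}$). The point of Lemma~\ref{lem_npt_countablebranches} in this proof is exactly that it makes $\mathring{\dc{t}}$ a countable limit ordinal, in which $T$-successors are cofinal and such an~$s^*$ can always be found.
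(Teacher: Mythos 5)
Your proof is correct, and it rests on the same two ingredients as the paper's: the cofinal $\omega$-chain of down-neighbours of~$t$ (Lemma~\ref{lem_downNeighboursOmegaChain}) combined with Lemma~\ref{lem:findRayForHighRay} to produce a comb through the parts along~$\mathring{\dc{t}}$ with spine in~$\eps$, and finite adhesion~\ref{pt2} to discard the finitely many neighbours sitting in parts below a threshold. The difference is organisational. The paper notes that ``concentrated in~$\eps$'' follows in one stroke once every \emph{infinite} subset $U$ of $N(V_{\uc{t}})$ admits a comb attached to~$U$ with spine in~$\eps$: finite adhesion pushes a cofinal subsequence of~$U$ into parts indexed cofinally along~$\mathring{\dc{t}}$, and Lemma~\ref{lem:findRayForHighRay} then supplies the comb. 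That single statement delivers $\Abs{N(V_{\uc{t}})}=\{\eps\}$ and the finiteness condition simultaneously. You instead verify the definition directly for each finite~$X$, which forces the explicit threshold successor~$s^*$ and the lifting of connectivity from~$\dot G$ to~$G$; this is longer but sound. One cosmetic slip: the comb supplied by Lemma~\ref{lem:findRayForHighRay} is attached to $\{u_n \colon n\ge m\}$ but need not have $u_m$ itself as a tooth, so you should either route the $v$-path to an actual tooth $u_k$ with $k\ge m$ (the interval $[s,t_k]$ works just as well), or observe that $G[\,V\rest[t_m,t)\,]$ is connected and avoids~$X$, so $u_m$ and the spine lie in the same component of $G-X$ regardless. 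Neither point affects the substance.
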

\begin{proof}
Using \cref{lem:displays} we may let $\eps$ be the unique end of~$G$ that corresponds to the high-ray~$\mathring{\dc{t}}$.
To show that $N(V_{\uc{t}})$ is concentrated in~$\eps$, it suffices to show that for every infinite set~$U$ of neighbours of $V_{\uc{t}}$ there exists a comb in~$G$ attached to~$U$ with its spine in~$\eps$.
For this, let~$U$ be any infinite set of neighbours of~$V_{\uc{t}}$.
Since $\mathring{\dc{t}}$ is a high-ray, it contains a cofinal $\omega$-chain $t_0<t_1<\cdots$, and since $(T,\cV)$ has finite adhesion, 
by \cref{lem_downNeighboursOmegaChain} all but finitely many of the vertices of~$U$ are contained in~$V\rest\{t'\in T\mid t_n\le t'<t\}$ for each $n\in\N$.
Hence we find a cofinal $\omega$-chain $t'_0<t'_1<\cdots$ in~$\mathring{\dc{t}}$ such that $V_{t'_n}$ contains a vertex~$u_n$ of~$U$ for all~$n\in\N$.
Applying \cref{lem:findRayForHighRay} to~$\{\,u_n\mid n\in\N\,\}$ yields the desired comb.
\end{proof}

Suppose now that $T$ is any order tree and that $G$ is a $T$-graph of finite adhesion.
Then $T$ defines a \pt\ $(T,\cV)$ of~$G$ if we let $V_t:=\{t\}$ for all~$t\in T$.
By \cref{lem:livingInPoints}, no end of~$G$ can live in a part of this \pt , so every end of~$G$ corresponds to a high-ray of~$T$ via~$\tau$.
Then $(T,\cV)$ displays all the ends of~$G$ by \cref{lem:displays}.
To distinguish this special case from the general case, we denote the bijection $\Omega(G)\to\HR(T)$ by~$\sigma$ in this case.
This proves \cref{SigmaBijection}.

\section{\Ff\ \pt s that display all ends}

\label{sec_6}

For any \tame\ $T$-graph~$G$, \cref{lem_highraytop} allows us to fully understand the topology of~$\Omega(G)$ through the combinatorial behaviour of high-rays in~$T$.
In this section, we generalise this lemma from \tame\ \mbox{$T$-graphs} to arbitrary connected graphs:
We show that every connected graph~$G$ admits a \pt\ $(T,\cV)$ for which we can prove an analogue of \cref{lem_highraytop}.
The \pt s that we construct will have two additional properties which are necessary to obtain that analogue of \cref{lem_highraytop}.
The first property is that the \pt\ displays all the ends of the underlying graph.
The second property, called `\ff ', is slightly more technical, but we will see in \cref{lem_WhyFFisUseful} below that this property precisely captures what we need.

Suppose that $G$ is any graph and that $(T,\cV)$ is a \pt\ of~$G$.
As in the previous section, let $\tau\colon\Omega(G)\to \HR(T)\sqcup T$ denote the map that takes each end of~$G$ to the high-ray or point of~$T$ which it corresponds to or lives at, respectively.
An end $\eps$ of~$G$ \emph{lives at} $\uc{t}\subset T$ and \emph{in} $V_{\uc{t}}$ if either the point $\tau(\eps)$ is contained in~$\uc{t}$ or the high-ray $\tau(\eps)$ contains~$t$.
We call $(T,\cV)$ \emph{\ff\ at} an end $\eps$ of~$G$ if $\eps$ corresponds to a high-ray of~$T$ and for every sequence $(\eps_n)_{n\in\N}$ of ends of~$G$ that live in the up-closures $\uc{s_n}$ of successors $s_n$ of tops of~$\tau(\eps)$ we have convergence $\eps_n\to\eps$ in $\Omega(G)$ as soon as 
for every finite vertex set $X\subset V(G)$ there are only finitely many numbers $n\in\N$ with $N_G(V_{\uc{s_n}})=X$.
If $(T,\cV)$ is \ff\ at every end of~$G$ that corresponds to a high-ray of~$T$, then we call $(T,\cV)$ \emph{\ff }.

Indeed, if a connected graph admits a \pt\ which both displays all its ends and is \ff\, then we can prove an analogue of \cref{lem_highraytop}:

\begin{lemma}\label{lem_WhyFFisUseful}
Let $G$ be any connected graph and let $(T,\cV)$ be a \ff\ \pt\ of~$G$ that displays all the ends of~$G$.
Let $\eps$ and $\eps_n$ $(n\in\N)$ be ends of~$G$, and let $\varrho:=\tau(\eps)$ and $\varrho_n:=\tau(\eps_n)$ be the corresponding high-rays in~$T$.
Let $A\subset\N$ consist of all numbers~$n$ for which $\varrho\subsetneq\varrho_n$, and let $B:=\N\setminus A$.
For each $n\in A$ we write $s_n$ for the successor in~$\varrho_n$ of the top of~$\varrho$ in~$\varrho_n$.
\begin{enumerate}
    \item We have convergence $\eps_n\to\eps$ in~$\Omega(G)$ for $n\in A$ and $n\to\infty$ if and only if $A$ is infinite and for every finite vertex set $X\subset V(G)$ there are only finitely many $n\in A$ such that~$X=N_G(V_{\upcl{s_n}})$.
    \item We have convergence $\eps_n\to\eps$ in~$\Omega(G)$ for $n\in B$ and $n\to\infty$ if and only if for every successor node $t\in\varrho$ there are only finitely many $n\in B$ with $\varrho\cap\varrho_n\subset\mathring{\dc{t}}$.
\end{enumerate}
\end{lemma}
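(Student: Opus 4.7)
My plan is to prove the four implications in turn, treating the two converses as the harder halves.

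For part (i) backward, the statement is immediate from the definition of sequential faithfulness: for each $n \in A$ the end $\eps_n$ lives in $V_{\uc{s_n}}$ with $s_n$ a successor of a top of $\tau(\eps) = \varrho$, and the stated condition on $N_G(V_{\uc{s_n}})$ is exactly the hypothesis required for sequential faithfulness of $(T, \cV)$ at $\eps$ to yield $\eps_n \to \eps$. For part (i) forward I argue contrapositively: if some finite $X$ equals $N_G(V_{\uc{s_n}})$ for infinitely many $n \in A$, then by \ref{pt2} each such $V_{\uc{s_n}}$ is a component of $G - X$ housing $\eps_n$. On the other hand, $\eps$ lies in a different component: by Lemma \ref{lem:findRayForHighRay} some ray in $\eps$ traverses $V_{t_k}$ along a cofinal $\omega$-chain $t_0 < t_1 < \cdots$ in $\varrho$, and for $k$ large enough that $V_{t_k} \cap X = \emptyset$ the part $V_{t_k}$ lies in $C(X, \eps)$. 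Since $s_n$ is above a top of $\varrho$ we have $V_{\uc{s_n}} \cap V_\varrho = \emptyset$, so $C(X, \eps) \neq V_{\uc{s_n}}$, contradicting $\eps_n \to \eps$.

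For part (ii) forward I again argue contrapositively. Suppose some successor $t \in \varrho$ has infinitely many offending $n \in B$ with $\varrho \cap \varrho_n \subset \mathring{\dc{t}}$; then $X := N_G(V_{\uc{t}})$ is finite by \ref{pt2}, $V_{\uc{t}}$ is a single component of $G - X$, and since rays in $\eps$ have tails in $V_{\uc{t}}$ (because $t \in O(\eps)$ forces $\eps \notin \Abs{V_{T \setminus \uc{t}}}$) we have $C(X, \eps) = V_{\uc{t}}$. For each such $n$, any $u \in \varrho_n$ strictly above $\varrho \cap \varrho_n$ is incomparable with $t$, so $V_{\uc{u}} \cap V_{\uc{t}} = \emptyset$; combined with $\eps_n \in \Abs{V_{\uc{u}}}$ this forces $C(X, \eps_n) \neq V_{\uc{t}}$, contradicting convergence.

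The main obstacle will be part (ii) backward. Given a finite $X$, the plan is to find a successor $t \in \varrho$ satisfying (a) almost all $n \in B$ have $t \in \varrho_n$, and (b) $V_{\uc{t}} \cap X = \emptyset$. Under both, $V_{\uc{t}}$ is connected in $G$, avoids $X$, and contains tails of rays from $\eps$ and from each remaining $\eps_n$, placing them all in the common component $C(X, \eps)$ so that $\eps_n \in \Omega(X, \eps)$. Condition (a) follows immediately from the hypothesis by taking $t$ a successor in $\varrho$ (since $t \notin \varrho_n$ is equivalent to $\varrho \cap \varrho_n \subset \mathring{\dc{t}}$). The difficulty lies in (b): if $X$ intrudes into up-closures $V_{\uc{t^*}}$ for tops $t^*$ of $\varrho$, then $V_{\uc{t}} \cap X \neq \emptyset$ for every $t \in \varrho$. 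Since there are at most $|X|$ such offending tops (they are pairwise incomparable, each accommodating at most the vertices of $X$ lying in its up-closure), I plan to cut off these intrusions by first enlarging $X$ to a finite $X' \supseteq X$ that adjoins the finite neighborhoods $N_G(V_{\uc{t^{*+}}})$ for the relevant successors $t^{*+}$ of each offending top $t^*$ (finite by \ref{pt2}); an argument akin to Lemma \ref{lem:PTtopNbhdClosure} ensures $\eps \notin \Abs{V_{\uc{t^{*+}}}}$ so that $C(X', \eps)$ avoids each offending $V_{\uc{t^{*+}}}$. Since $\Omega(X', \eps) \subseteq \Omega(X, \eps)$ the substitution is harmless, and afterwards a sufficiently high successor $t \in \varrho$ meets both (a) and (b).
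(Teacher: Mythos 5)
Parts (i) and (ii)(forward$\Rightarrow$) of your proposal are essentially sound: (i)(backward) is, as you say, a direct unwinding of the definition of a \ff\ \pt , and your contrapositive for (i)(forward) correctly identifies $V_{\uc{s_n}}$ as the component $C(X,\eps_n)$ of $G-X$ and separates it from a tail of a ray of $\eps$ living in $V_\varrho$. This matches the paper, which dismisses both directions of (i) in one line each. In (ii)(forward), however, your argument via ``any $u\in\varrho_n$ strictly above $\varrho\cap\varrho_n$'' silently assumes $\varrho_n\setminus\varrho\neq\emptyset$; for $n\in B$ the case $\varrho_n\subsetneq\varrho$ is possible (nested high-rays exist in any branch of order type $\ge\omega\cdot 2$), and then no such $u$ exists. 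The case is easily repaired: if $C(X,\eps_n)$ were equal to $G[V_{\uc{t}}]$ one would get $t\in O(\eps_n)=\varrho_n$, contradicting $\varrho\cap\varrho_n\subset\mathring{\dc{t}}$ --- an argument that in fact covers all cases at once.

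The genuine gap is in (ii)(backward), and you have correctly located where the difficulty sits but your proposed remedy does not resolve it. Your plan hinges on finding a successor $t\in\varrho$ with $V_{\uc{t}}\cap X=\emptyset$ (condition (b)), and you observe that this fails exactly when some $x\in X$ lies in a part $V_u$ with $u$ above a top $t^*$ of $\varrho$. But then $u\in\uc{t^*}\subset\uc{t}$ for \emph{every} $t\in\varrho$, so (b) is unachievable for every choice of $t$ --- and enlarging $X$ to $X'\supseteq X$ only makes (b) harder, since the offending vertices of $X$ are still present in $X'\cap V_{\uc{t}}$. Knowing that $C(X',\eps)$ avoids the sets $V_{\uc{t^{*+}}}$ does not rescue the argument either, because your mechanism for placing $\eps$ and $\eps_n$ in a common component was precisely the connected set $V_{\uc{t}}$ disjoint from $X$; once that set meets $X'$, you have no connected $X'$-avoiding set containing tails of rays from all the relevant ends, and the final sentence ``afterwards a sufficiently high successor $t\in\varrho$ meets both (a) and (b)'' is false. (You also do not address offending vertices lying in the top's own part $V_{t^*}$ rather than in some $V_{\uc{t^{*+}}}$.) The fix is not to enlarge $X$ but to shrink the connected set: the paper uses Lemma~\ref{lem:findRayForHighRay} to produce rays $R\in\eps$ and $R_n\in\eps_n$ contained in $G[\,V\rest\{t'\in\varrho\colon t'\ge t\}\,]$ and $G[\,V\rest\{t'\in\varrho_n\colon t'\ge t\}\,]$ respectively, both starting at a common vertex $v\in V_t$. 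These index sets are contained in $\varrho$ resp.\ $\varrho_n$ and hence automatically avoid $\uc{t^*}$ for every top $t^*$ of $\varrho$ (for $n\in B$ one has $t^*\notin\varrho_n$, else $\varrho\subsetneq\varrho_n$), so the vertices of $X$ above tops are harmless; the only remaining requirement is that $t$ be chosen high enough that no part indexed by an element of $\varrho$, or by an element of some $\varrho_n$ not lying above all of $\varrho$, that meets $X$ survives above $t$ --- a finiteness argument over the at most $|X|$ indices involved. Your dichotomy and your counting of offending tops are correct, but without replacing $V_{\uc{t}}$ by this smaller connected set the argument does not close.
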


\begin{proof}
(i) The forward implication is evident.
The backward implication follows from the assumption that $(T,\cV)$ is \ff .

(ii) The forward implication holds because~$(T,\cV)$ is a \pt\ and as such has finite adhesion.

For the backward implication, we have to prove that $\eps_n\to\eps$ for $n\in B$ and $n\to\infty$.
For this, let $X\subset V(G)$ be any finite vertex set.
Let $t\in\varrho$ be any successor node such that $X\subset V_{\mathring{\dc{t}}}$.
By assumption, we find a large enough number $N\in\N$ such that $t\in\varrho_n$ for all $n\in B$ with~$n\ge N$.
We claim that $\eps_n\in\Omega(X,\eps)$ for all $n\in B$ with~$n\ge N$.
Indeed, consider any such~$n$, and pick any vertex $v\in V_{t}$.
\cref{lem:findRayForHighRay} yields a ray $R_n\in\eps_n$ that is included in $G[\,V\rest\{t'\in\varrho_n\colon t'\ge t\}\,]$ and starts at~$v$.
Similarly, \cref{lem:findRayForHighRay} yields a ray $R\in\eps$ that is included in $G[\,V\rest\{t'\in\varrho\colon t'\ge t\}\,]$ and starts at~$v$.
Then $R_n\cup R$ is a double ray in~$G$ which avoids~$X$ with one tail in~$\eps_n$ and another in~$\eps$, yielding $\eps_n\in\Omega(X,\eps)$.
\end{proof}

In the remainder of this section, we prove that every connected graph admits a \pt\ with these two properties, \cref{thm:PartitionTree}.
For the proof we need the following lemma:

\begin{lemma}\label{lem:Uforff}
Let $G$ be any connected graph and let $C$ be any connected induced subgraph of~$G$ whose neighbourhood is concentrated in an end $\eps$ of~$G$.
Then there exists a non-empty connected vertex set $U\subset V(C)$ that satisfies all of the following conditions:
\begin{enumerate}
    \item $U$ is either finite or concentrated in~$\eps$.
    \item Every component of $C-U$ has finite neighbourhood in~$G$.
    \item Every sequence $(\eps_n)_{n\in\N}$ of ends of~$G$, where each $\eps_n$ lives in a component $D_n$ of $C-U$ so that the map $\N\ni n\mapsto N_G(D_n)$ is finite-to-one, converges to $\eps$.
    \item $N_G(U)\cap N_G(C)$ is infinite.
\end{enumerate}
\end{lemma}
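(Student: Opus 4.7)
The plan is to build $U$ in three stages.

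\textbf{Stage 1 (the seed).} For each $v\in N_G(C)$ we choose $\pi(v)\in N_G(v)\cap V(C)$ (non-empty by definition of $N_G(C)$), and let $V_{\mathrm{hd}}\subset V(C)$ denote the set of vertices having infinitely many neighbours in $V(G)\setminus V(C)$. Put $U_0:=\pi(N_G(C))\cup V_{\mathrm{hd}}$. A witness-counting argument in the spirit of Theorem~\ref{thm:envelope} shows that $U_0$ is finite or concentrated in $\eps$: for a finite $X\subset V(G)$, distinct vertices $u\in\pi(N_G(C))\setminus C(X,\eps)$ give distinct witnesses $v_u\in\pi^{-1}(u)$ that must lie in the finite set $X\cup(N_G(C)\setminus C(X,\eps))$, and any putative $u\in V_{\mathrm{hd}}\setminus C(X,\eps)$ would have all its infinitely many external neighbours trapped in that same finite set. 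Property~(iv) is already immediate since each $v\in N_G(C)$ has $\pi(v)\in U_0$ as a neighbour.

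\textbf{Stage 2 (envelope inside $C$).} We enlarge $U_0$ to $U_1$ by performing the envelope construction of Theorem~\ref{thm:envelope} inside the connected induced subgraph $C$: add to $U_0$ the spines of a maximal family of pairwise disjoint combs in $C$ attached to $U_0$ and the centres of all stars in $C$ attached to $U_0$, and then pass to a connected version using the rooted-spanning-tree trick at the end of that proof. The Claim~\ref{claim_1}-style argument transfers verbatim to end spaces of $G$ (since spines are rays in $C\subset G$), so $U_1$ remains finite or concentrated in $\eps$; by the same star-and-comb maximality as in Theorem~\ref{thm:envelope}, every component of $C-U_1$ has finite neighbourhood in~$C$.

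\textbf{Stage 3 (killing infinite external neighbourhoods).} Because $V_{\mathrm{hd}}\subset U_1$, every vertex of $V(C)\setminus U_1$ has only finitely many external neighbours. A component $D$ of $C-U_1$ may nevertheless still have infinite external neighbourhood $W_D:=N_G(D)\cap N_G(C)$, which is concentrated in $\eps$ as a subset of $N_G(C)$. For each such bad $D$ we recurse: applying Stages 1 and 2 inside $D$ with $W_D$ playing the role of $N_G(C)$ produces a connected $U_D\subset V(D)$ that is finite or concentrated in $\eps$, has finite adhesion in $D$, and makes each component of $D-U_D$ have finite neighbourhood in $G$. Set $U:=U_1\cup\bigcup_{D\text{ bad}}U_D$, attached to $U_1$ by a finite internal path within $D\cup N_C(D)$ for each $D$ to preserve connectedness.

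Property~(ii) then holds by construction, and (iii) follows from Lemma~\ref{lem_concentratedconvergence} applied to $U$ inside $G$ once one observes that $n\mapsto N_G(D_n)$ being finite-to-one forces $n\mapsto N_G(\tilde D_n)$ finite-to-one, where $\tilde D_n$ is the component of $G-U$ containing $D_n$ (since after Stage 3, $\tilde D_n\setminus D_n\subset V(G)\setminus V(C)$ is attached to the rest of $G$ only through the finite boundary of $D_n$). The main obstacle, I expect, will be verifying (i) for the final $U$: the family of bad components can be uncountable, and one must argue that for every finite $X\subset V(G)$ only finitely many $U_D$ contribute vertices outside $C(X,\eps)$. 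This should reduce, via the same witness-counting trick as in Stage~1, to the concentration of $N_G(C)$ in~$\eps$, since any such contribution would produce a distinct witness inside $N_G(C)\setminus C(X,\eps)$.
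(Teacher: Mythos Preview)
Your approach diverges from the paper's, and the divergence is exactly where the gap lies. The paper applies Theorem~\ref{thm:envelope} \emph{once}, in the auxiliary graph $G_C^+$ obtained from $G[C\cup N(C)]$ by making $N(C)$ complete: since $N(C)\cup Z$ (for a star or comb $Z$ attached to $N(C)$) is concentrated in a single end $\eps^+$ of $G_C^+$, its envelope $U^*$ has finite adhesion in $G_C^+$, and because $N(C)\subset U^*$, every component of $C-U$ (where $U:=U^*\cap C$) is already a component of $G_C^+-U^*$ and hence has finite neighbourhood in~$G$ directly---no iteration is needed.

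Your three-stage construction works entirely inside $C$, so Stage~2 only yields finite adhesion \emph{in $C$}: a component $D$ of $C-U_1$ may still have $W_D:=N_G(D)\cap N_G(C)$ infinite. In Stage~3 you assert that one round of Stages~1--2 inside each bad $D$ ``makes each component of $D-U_D$ have finite neighbourhood in~$G$'', but this is the same problem one level down. The envelope inside $D$ controls $N_D(D')$ for a sub-component $D'$ of $D-U_D$, not $N_G(D')\cap W_D$; including a single neighbour $\pi'(v)\in U_D$ for each $v\in W_D$ does not prevent $v$ from also having neighbours in $D'$. So the recursion must continue, and you give no termination argument (nor is one apparent: you obtain a strictly decreasing chain of infinite subsets of $N_G(C)$). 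There is also a smaller issue in your sketch of~(iii): the component $\tilde D_n$ of $G-U$ containing $D_n$ may well contain other components $D_m$ of $C-U$, linked to $D_n$ through $V(G)\setminus V(C)$, so $\tilde D_n\setminus D_n\not\subset V(G)\setminus V(C)$ in general and $n\mapsto\tilde D_n$ can have infinite fibres. This part is repairable---the proof of Lemma~\ref{lem_concentratedconvergence} really only needs $N_G(D_n)$ finite and contained in a set concentrated in~$\eps$, here $U\cup N_G(C)$---but not via the route you sketch.
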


\begin{proof}
Let $G_C$ be the graph that is obtained from~$G[C\cup N(C)]$ by removing all the edges that run between any two vertices in~$N(C)$. By the star-comb lemma~(\ref{StarCombLemma}), $G_C$ contains either a star or a comb $Z$ attached to~$N(C)$. %, and since $N(C)$ is concentrated in $\eps$, so is $N(C) \cup Z$.
Let $G^+_C$ be the supergraph of $G_C$ in which we make $N(C)$ complete. Let $\eps^+$ be the end of $G^+_C$ containing the rays from the clique induced by~$N(C)$, and note that $N(C)\cup Z$ is concentrated in $\eps^+$ in the graph $G^+_C$.
Now apply \cref{thm:envelope} inside $G^+_C$ to find a connected envelope $U^*$ for $N(C) \cup Z$ that is concentrated in~$\eps^+$ and so that $U:= U^* \cap C$ is connected. 
We verify that $U$ satisfies (i)--(iv):

(i) Suppose that $U$ is infinite. We need to show that for any finite set $X$ of vertices of~$G$, almost all vertices of $U$ are contained in $C(X,\eps)$. 
By assumption, almost all vertices of $N(C)$ are contained in $C(X,\eps)$, and by increasing $X$ if necessary, we may assume that $C(X,\eps)$ is the only component of $G-X$ containing vertices of $N(C)$. Now since $U^*$ is concentrated in $\eps^+$, it follows that almost all vertices of $U^*$ can be connected to $N(C)$ in $G^+_C - X$. 
Then the same is true in $G - X$, and so all these vertices belong to $C(X,\eps)$. Thus, we have shown that almost all vertices of $U^*$ (and hence of $U$) belong to $C(X,\eps)$, as desired.

(ii) Every component of $C - U$ is also a component of $G_C-U^\ast$, and so has finite neighbourhood by the definition of an envelope. Note that it does not matter here whether we take the neighbourhood in~$G$ or in~$G_C$, because $N_G(C)$ is included in~$U^\ast$. 

(iii) Given a sequence $(\eps_n)_{n\in\N}$ as in the statement of~(iii), note first that $U$ must be infinite. Hence~$U$ is concentrated in~$\eps$ by~(i), and so
the claim follows by \cref{lem_concentratedconvergence}.

(iv) $N_G(U)\cap N_G(C)$ is infinite by the choice of $Z \subset U^*$.
\end{proof}

\begin{customthm}{\ref{thm:PartitionTree}}
Every connected graph has a \ff\ \pt\ that displays all its ends.
\end{customthm}
\begin{proof}
Let $G$ be any connected graph.
We will use the following notation.
Suppose that $(T_1,\cV_1)$ and $(T_2,\cV_2)$ are two \pt s of $G$ and that $T_1$ has a final level.
Furthermore, suppose that $\cV_1=(\,V_t^1\mid t\in T_1\,)$ and that $\cV_2=(\,V^2_t\mid t\in T_2\,)$.
We write $(T_1,\cV_1)\le (T_2,\cV_2)$~if the following two conditions are met:
\begin{itemize}
    \item $T_2$ extends $T_1$ so that all the points of $T_2\setminus T_1$ lie above the final level of~$T_1$;
    \item $V^1_t=V^2_t$ for all $t\in T_1$ below the final level, and $V_t^1=V^2_{\uc{t}}$ for all $t\in T_1$ in the final level.
\end{itemize}
We will transfinitely construct a sequence $(\,(T_i,\cV_i)\,)_{i\le\kappa}$ of \ff\ \pt s $(T_0,\cV_0)< (T_1,\cV_1) < \cdots$, where $\kappa$ will be an ordinal~$\kappa\le\omega_1$.
Each $T_i$ will have a final level~$F_i$ of limit height at least~$i$, and each $(T_i,\cV_i)$ will display all the ends of~$G$ that do not live at points in~$F_i$.
In the construction, we shall ensure that each point $C\in F_i$ is a connected induced subgraph of~$G$ and $V^i_C$ is equal to the vertex set of this subgraph, i.e., $V^i_C=V(C)$.
We will terminate the construction at the first ordinal $\kappa$ with $T_{\kappa+1}=T_\kappa$.
In the end, we will argue that $(T_\kappa,\cV_\kappa)$ is the desired \pt .

\medskip
To start the construction, we use \cref{lem:maxNT} to let $T_0$ 
be the order tree that arises from an inclusionwise maximal normal tree $T_0'\subset G$ by declaring each component $C$ of $G-T_0'$ a top in~$T_0$ of the down-closed $\omega$-chain~$\dc{N(C)}_{T_0'}$. 
We let $V^0_t:=\{t\}$ for all $t\in T_0'$ and $V^0_C:=V(C)$ for all components $C$ of $G-T_0'$.
Then $(T_0,\cV_0)$ is \ff\ since $T_0$ contains no 
successors of limits.
And it displays all the ends of~$G$ that do not live in any part~$V_C^0$ by \cref{lem:displays}.

At a general step $0<i < \kappa$, suppose we have already constructed $(T_j,\cV_j)$ for all $j<i$ such that $(T_j,\cV_j)<(T_k,\cV_k)$ for all $j<k<i$.
Then we put $T:=\bigcup\,\{\,T_j\mid j<i\,\}$.
We consider two cases, that $i$ is a successor or a limit.

\medskip
\noindent\textbf{Case~1.} In the first case, $i$ is a successor ordinal~$i=j+1$. Then $T=T_j$ has a final level~$F_j$ of limit height.
Consider any point~$C\in F_j$.
The point~$C$ is a top of a high-ray $\varrho$ of~$T$, and for the high-ray $\varrho$ there is a unique end $\eps$ of~$G$ with~$\tau(\eps)=\varrho$ by \cref{lem:displays}.
Furthermore, $C$ is a connected subgraph of~$G$ whose neighbourhood is concentrated in~$\eps$ by \cref{lem:PTtopNbhdClosure}.
We apply \cref{lem:Uforff} to $C\subset G$ and $\eps$ to find a non-empty connected vertex set~$U_C\subset V(C)$ that satisfies the following conditions:
\begin{enumerate}
    \item $U_C$ is either finite or concentrated in~$\eps$.
    \item Every component of $C-U_C$ has finite neighbourhood in~$G$.
     \item Every sequence $(\eps_n)_{n\in\N}$ of ends of~$G$, where each $\eps_n$ lives in a component $D_n$ of $C-U$ so that the map $\N\ni n\mapsto N_G(D_n)$ is finite-to-one, converges to $\eps$. 
    \item $N_G(U_C)\cap N_G(C)$ is infinite.
\end{enumerate}
In each component $D$ of $C-U_C$ we use \cref{lem:maxNT} to pick an inclusionwise maximal normal tree $T(C,D)$ rooted in a vertex that sends an edge to~$U_C$.
Then the neighbourhood in $G$ of every component $K$ of $D-T(C,D)$ is included in $U_C\cup T(C,D)$, where the proportion $N(K)\cap U_C$ is finite (because $N(D)$ is finite by~(ii)) while $N(K)\cap T(C,D)$ is infinite; and in fact the infinitely many neighbours of~$K$ in $T(C,D)$ determine a normal ray $R(K)$~of~$T(C,D)$.

We obtain the tree $T_i$ from $T$ in two steps, as follows.
First, we add for each $C$ in the final level $F_j$ of~$T=T_j$ and every component $D$ of $C-U_C$ the order tree defined by $T(C,D)$ directly above the point~$C$ so that the root of $T(C,D)$ becomes a successor of~$C$.
Second, we add for each $C$ and $D$ every component $K$ of $D-T(C,D)$ as a top of the high-ray~$\dc{R(K)}_{T_i}$.
The family $\cV_i$ is defined as follows.
\begin{itemize}
    \item For each $t\in T-F_j$ we let $V^i_t:=V^j_t$.
    \item For each $C\in F_j$ we let $V^i_C:=U_C$.
    \item For each $t\in T(C,D)$ we let $V^i_t:=\{t\}$.
    \item For each $K$ in the final level of~$T^i$ we let $V^i_K:=V(K)$.
\end{itemize}

Clearly, $(T_j,\cV_j)\le (T_i,\cV_i)$ if $(T_i,\cV_i)$ is a \pt .
We claim that $(T_i,\cV_i)$ is a \pt\ of~$G$.
\ref{pt1} follows with~(iv).
Condition~(ii) ensures that $(T_i,\cV_i)$ has finite adhesion~\ref{pt2}.
\ref{pt3} holds by construction.

To see that the \pt\ $(T_i,\cV_i)$ is \ff , let any end~$\eps$ of~$G$ be given that corresponds to a high-ray $\tau(\eps)$ of~$T_i$.
Here, $\tau$ is defined with regard to~$(T_i,\cV_i)$.
If the order-type of the high-ray $\tau(\eps)$ is equal to the height of~$F_i$, then $(T_i,\cV_i)$ is \ff\ at~$\eps$ for the trivial reason that all tops of~$\tau(\eps)$ lie in the final level~$F_i$ and therefore have no successors.
Hence we may assume that $\tau(\eps)$ is included in~$T_j\subset T_i$.
If the order-type of~$\tau(\eps)$ is less than the height of~$F_j$, then all successors of the tops of~$\tau(\eps)\subset T_i$ are present in~$T_j$.
Hence $(T_i,\cV_i)$ is \ff\ at~$\eps$ because $(T_j,\cV_j)\le (T_i,\cV_i)$ is.
Otherwise, the order-type of~$\tau(\eps)$ is equal to the height of~$F_j$.
Then the successors of the tops of $\tau(\eps)\subset T_i$ are missing in~$T_j$.
To see that $(T_i,\cV_i)$ is \ff\ at~$\eps$, let any sequence $(\eps_n)_{n\in\N}$ of ends of~$G$ be given that live in the up-closures $\uc{s_n}$ of successors of tops of~$\tau(\eps)$ such that the map $\N\ni n\mapsto N_G(V_{\uc{s_n}})$ is finite-to-one.
By construction, each successor $s_n$ is the root of a normal tree~$T(C_n,D_n)$.
Let $X\subset V(G)$ be any finite vertex set.
We have to find a number $N\in\N$ such that all ends $\eps_n$ with $n\ge N$ live in the component $C(X,\eps)$.
Only the vertex sets of finitely many tops $C$ of $\tau(\eps)$ meet $X$, all others must be included in~$C(X,\eps)$.
This partitions the successors $s_n$, and hence the sequence $\eps_n$, into finitely many subsequences: the first subsequence is formed by all ends $\eps_n$ that already live in~$C(X,\eps)$, and the other subsequences are formed by all ends $\eps_n$ that live in $C$ for a common top~$C$ of~$\tau(\eps)$.
Applying~(iii) to all these subsequences except the first yields~$N$.

To show that $(T_i,\cV_i)$ displays all the ends of~$G$ that do not live at points in~$F_i$,
it suffices by \cref{lem:displays} to show that every end $\eps$ of~$G$ with $\tau(\eps)\in T_i$ lives at some~$K\in F_i$ (i.e.\ satisfies $\tau(\eps)=K$).
Here, $\tau$~is defined with regard to~$(T_i,\cV_i)$.
Let any end $\eps$~of $G$ be given with $\tau(\eps)\in T_i$, and recall that $\tau(\eps)$ must be a limit.
Then $\tau(\eps)$ cannot be a point of $T-F_j$, because $(T_j,\cV_j)$ displays all the ends of~$G$ that do not live at points in~$F_j$.
And $\tau(\eps)$ cannot be a point $C\in F_j\subset T_i$, because this would imply $\eps\in\Abs{U_C}$ (by \cref{lem:livingInPoints}) where by~(i) the end $\eps$ would then be sent to the high-ray $\mathring{\dc{C}}_{T_i}$ instead of~$C$.
Therefore, $\tau(\eps)\in F_i$ is the only possibility, as desired.

\medskip
\noindent\textbf{Case~2.} In the second case, $i$ is a limit, so $T$ has no final level (or else the construction would have terminated for some~$j<i$).
Then for every $t\in T$ there is a least ordinal $j(t)<i$ for which $t$ is contained in~$T_{j(t)}$ but not in the final level~$F_{j(t)}$, so $V_t^j=V_t^{j(i)}$ for all $j\in [j(t),i)$.
We let $V_t^i:=V_t^{j(t)}$ for all $t\in T$.
%Note that all ordinals $j(t)$ are non-limits.\blue{why? uses knowledge of the construction in the limit case?? will we need this?? I would think no}
If $C$ is any component of $G-\bigcup_{t\in T}V_t^i$, then
for each $j<i$ there is a unique point $C_j\in F_j$ with $C\subset C_j$, and $B(C):=\{\,C_j\mid j<i\,\}$ is a branch~$T$.
We obtain $T_i$ from $T$ by adding each component $C$ as a top of the branch~$B(C)$ of~$T$.
Letting $V_C:=V(C)$ for all these tops completes the definition of~$(T_i,\cV_i)$.

To ensure that $(T_i,\cV_i)$ is a \pt , we have to show that each $C$ has cofinal down-neighbourhood in~$G/\cV_i$.
Since $(T_i,\cV_i)$ has finite adhesion, it suffices to show that the collection $T_C$ of all points of~$T$ whose parts contain neighbours of~$C$ is an infinite subset of~$B(C)$.
The inclusion $T_C\subset B(C)$ is immediate from the fact that each $G/\cV_j$ with $j<i$ is a $T_j$-graph in which $C_j$ is a maximal vertex.
So assume for a contradiction that $T_C$ is finite and consider any $j<i$ with $T_C\subset T_j-F_j$.
Then $C=C_j$ since otherwise $C\subsetneq C_j$ has a neighbour in~$C_j$ that is contained in no part $V_t$ with $t\in T_C$, contradicting the definition of~$T_C$.
But then $C_j$ has finite neighbourhood, contradicting the fact that $C_j\in F_j$ is a vertex at limit height in the $T_j$-graph~$G/\cV_j$.

To see that $(T_i,\cV_i)$ is \ff , consider any end~$\eps$ of~$G$ for which $\tau(\eps)$ is a high-ray of~$T_i$.
If~$\tau(\eps)$ is a high-ray of any $T_j$ with $j<i$, then we are done because $(T_j,\cV_j)$ is \ff\ at~$\eps$ by assumption.
Otherwise $\tau(\eps)$ is a branch of~$T$ with all tops in~$F_i$, so $(T_i,\cV_i)$ is \ff\  at~$\eps$ for the trivial reason that these tops have no successors in~$T_i$.

To show that $(T_i,\cV_i)$ displays all the ends of~$G$ that do not live at points in~$F_i$, it suffices by \cref{lem:displays} to show that every end $\eps$ of~$G$ with $\tau(\eps)\in T_i$ satisfies~$\tau(\eps)\in F_i$.
Let us assume for a contradiction that $G$ has an end $\eps$ such that $\tau(\eps)=:t\in T_i$ is a point below~$F_i$.
Then $t$ lies below $F_j$ for $j:=j(t)<i$, contradicting our assumption that $(T_j,\cV_j)$ displays all the ends of~$G$ that do not live at points in~$F_j$.

\medskip
We terminate the construction at the first ordinal $\kappa$ with $T_{\kappa+1}=T_{\kappa}$.
Then $\kappa\le\omega_1$ because each $(T_\alpha,\cV_\alpha)$ defines a $T_\alpha$-graph $G/\cV_\alpha$ of finite adhesion that has a final level $F_\alpha$ of height at least~$\alpha$, and these \mbox{$T_\alpha$-graphs} cannot have uncountable branches by \cref{cor_countablebranches}.
By assumption, $(T_\kappa,\cV_\kappa)$ is a \ff\ \pt\ of~$G$ that displays all the ends of~$G$ which do not live at points in~$F_\kappa$.
We claim that $(T_\kappa,\cV_\kappa)$ displays all the ends of~$G$.
For this, it suffices to show that no end~$\eps$ of~$G$ lives at a point in~$F_\kappa$.
And indeed, no end of~$G$ can live at a point in~$F_\kappa$, because otherwise the construction would not have terminated.
Therefore, $(T_\kappa,\cV_\kappa)$ is the desired \pt .
\end{proof}

\section{Proof of \cref{thm_rep}}

\label{sec_7}

\begin{proof}[Proof of \cref{thm:EndspaceTgraph}]
First note that \cref{prop_specialtrees} and \cref{prop_specialendspaces} establish the equivalence $(2) \Leftrightarrow (3)$ in \cref{thm_rep}. Hence, it only remains to show the implication $(1) \Rightarrow (2)$: Every end space is homeomorphic to the end space of a \tame\  graph $G'$ on some order tree $T'$.

Let $\Omega(G)$ be any end space and recall that we may assume $G$ to be connected.
By \cref{thm:PartitionTree} we find a \ff\ \pt\ $(T,\cV)$ of~$G$ that displays all the ends of~$G$.
Without loss of generality all non-limits $t\in T$ are named so that~$V_t=\{t\}$.

\medskip
\noindent\textbf{Construction of~$\boldsymbol{T'}$ and $\boldsymbol{G'}$.} 
Intuitively, we obtain $T'$ from $T$ by splitting up limit nodes of $T$ in a careful manner. Formally, we define an order tree $T'$ and an epimorphism $\varphi\colon T'\to T$ as follows.
Let $L\subset T$ consist of all the limits of~$T$ that have at least one successor in~$T$, and for every $\ell\in L$ let the set $S(\ell)$ consist of all the successors of $\ell$ in~$T$.
For a non-limit $t\in T$ we write $N_t$ for the finite neighbourhood of~$V_{\uc{t}}$ in~$G$.
For each limit $\ell\in L$ we put $\cN_\ell:=\{\,N_t\colon t\in S(\ell)\,\}$.
We obtain the order tree $T'$ from $T$ as follows. First, add for each $\ell\in L$ and $X\in\cN_\ell$ a new node $v(\ell,X)$ that we declare to be a successor of $\ell$ and a predecessor of all $t\in S(\ell)$ with $N_t=X$. Then  delete $L$.

We let the epimorphism $\varphi\colon T'\to T$ be the identity on $T\setminus L$ and we let it send each $v(\ell,X)$ to $\ell$.
Then $\varphi$ is onto. Further, it is a homomorphism (i.e. $t' < s'$ in $T'$ implies $\varphi(t') < \varphi(s')$ in $T$) that is non-injective only at limits, i.e.\ if $t' \neq s'$ but $\varphi(t') = \varphi(s')$, then $t'$ and $s'$ are limits of $T'$ (with $\dc{\mathring{t}'} = \dc{\mathring{s}'}$, by the homomorphism property).
In particular, $\varphi$ defines a bijection $\varrho\mapsto\varphi[\varrho]$ between the high-rays of $T'$ and $T$, which we denote by~$\Phi$.

Finally, let $G'$ be the graph with $V(G'):=T'$ and 
$$E(G') := \set{tt'}:{t < t' \in T' \; \wedge \; \varphi(t)\varphi(t') \in E(\dot{G})}.$$

\medskip
\noindent\textbf{$\boldsymbol{G'}$ is a \tame\ $\boldsymbol{T'}$-graph.}
First, we verify that $G'$ is a $T'$-graph.
From the definition of $E(G')$ it is clear that the end vertices of any edge in $E(G')$ are comparable in $T'$.
Hence, it remains to show that the set of lower neighbours of any point $t\in T'$ is cofinal in~$\dc{\mathring{t}}$. 
Towards this end, let $t' < t$ in $T'$ be arbitrary. We need to find some $x \in T'$ with $t' \leq x < t$ and $xt \in E(G')$. Since $\varphi$ is a homomorphism, $\varphi(t') < \varphi(t)$. Since $\dot{G}$ is a $T$-graph, there is $y \in T$ with $\varphi(t') \leq y < \varphi(t)$ and $y \varphi(t) \in E(\dot{G})$. Since $\varphi$ is onto and level-preserving, there is a unique $x \in T'$ with $t' \leq x < t$ and $\varphi(x) = y$. Then $xt \in E(G')$ as desired.

Then we verify that the $T'$-graph $G'$ is \tame .
For this, let $t = v(\ell,X)$ be any limit of $T'$. Recall that $X$ is a finite set of vertices in~$G$, and we write $\dot{X} \subset T$ for the finitely many nodes in $T$ whose parts intersect~$X$ non-trivially. 
We claim that $S_t := \varphi^{-1}(\dot{X}) \cap \dc{\mathring{t}}$
is as desired. First of all, since $\varphi$ is a homomorphism, $S_t$~is finite. Now we argue that any $t'>t$ in $T'$ has all its down-neighbours below $t$ inside $S_t$. So consider some $x < t < t'$ with $xt' \in E(G')$. Then $\varphi(x) < \varphi(t) < \varphi(t')$ and $\varphi(x) \varphi(t') \in E(\dot{G})$. Now $t' > v(\ell,X)$ in $T'$ implies by construction that $\varphi(x) \in \dot{X}$, and hence $x \in S_t$ as desired.

\medskip
\noindent\textbf{The end spaces are homeomorphic.}
Since $(T,\cV)$ is a sequentially faithful partition tree for $G$, we have a natural bijection $\tau\colon\Omega(G)\to\HR(T)$ from the ends of $G$ to the high-rays of $T$ such that \cref{lem_WhyFFisUseful} provides a combinatorial description of convergence of a sequence of ends $\eps_n \to \eps$ in terms of their associated high-rays $\tau(\eps_n)$ and $\tau(\eps)$.

Similarly, since $G'$ is a uniform $T'$-graph, we have a natural bijection $\sigma\colon\Omega(G')\to\HR(T')$ from the ends of $G'$ to the high-rays of $T'$ such that \cref{lem_highraytop} provides a combinatorial description of convergence of a sequence of ends $\eps'_n \to \eps'$ in terms of their associated high-rays $\sigma(\eps'_n)$ and $\sigma(\eps')$.

To complete the proof, we show that the bijection $\Phi \colon \HR(T') \to \HR(T)$ lifts to a homeomorphism $$f:= \tau^{-1} \circ \Phi \circ \sigma \colon \Omega(G') \to \Omega(G)$$ as in the following diagram:
\begin{center}
\begin{tikzcd}
{}&\arrow[l,phantom,"\Omega(G')\ni\; \; "]\eps'\arrow[r, mapsto,"\sigma"]\arrow[d, mapsto, dashed, "f"']&\sigma(\eps')\arrow[d, mapsto,"\Phi"]\arrow[r,phantom,"\in\HR(T')"]&{}\\
{}&\arrow[l,phantom,"\Omega(G)\ni\;"]\eps& \varphi[\sigma(\eps')]\arrow[l, mapsto,"\tau^{-1}"]\arrow[r,phantom,"\;\in\HR(T)"]&{}
\end{tikzcd}
\end{center}
Towards this aim, consider ends $\eps'_n$ (for $n \in \N$) and $\eps'_\star$ in $\Omega(G')$, with images $\eps_s := f(\eps'_s)$ in $\Omega(G)$ for $s \in \N \cup \{\star\}$. We show that $\eps'_n \to \eps'_\star$ in $\Omega(G')$ if and only if $\eps_n \to \eps_\star$ in $\Omega(G)$.
Write $\varrho'_s:=\sigma(\eps'_s)$ $\HR(T')$ and $\varrho_s:=\tau(\eps_s)$ in $\HR(T)$ for $s \in \N \cup \{\star\}$ for the associated high-rays. By definition of $f$, we have $\varrho_s := \Phi(\varrho'_s)$ for all $s \in \N \cup \{\star\}$. By definition of $\Phi$, we have
$$\set{n \in \N}:{\varrho_\star \subsetneq \varrho_n} = A = \set{n \in \N}:{\varrho'_\star \subsetneq \varrho'_n}.$$
For each $n\in A$ let $s_n$ denote the successor in~$\varrho_n$ of the top of~$\varrho$ in~$\varrho_n$. Then
\begin{align*}
  & \eps'_n \to \eps'_\star \in \Omega(G') \; \text{as } n \to \infty \text{ for } n \in A \\
  \Leftrightarrow \; & |A| = \infty \text{ and for every top } t=v(\ell,X) \text{ of } \varrho'_\star \text{ there are only finitely many } n \in A \text{ with } t \in \varrho'_n \\
  \Leftrightarrow \; & 
   |A| = \infty \text{ and for every finite } X\subset V(G) \text{ there are only finitely many } n\in A \text{ with } X=N_{s_n}\\
  \Leftrightarrow \; & \eps_n \to \eps_\star \in \Omega(G) \; \text{as } n \to \infty \text{ for } n \in A,
\end{align*}
where the first equivalence is \cref{lem_highraytop}~(i) and the third is \cref{lem_WhyFFisUseful}~(i). %, \blue{but the second needs some argumentation.
To see the backward implication of the second equivalence, consider any top $t=v(\ell,X)$ of~$\varrho'_\star$. For each $\varrho'_n$ with $t\in\varrho'_n$, we have that ($\ell$ is the predecessor of~$s_n$ and) $N_{s_n}=X$ by definition of~$\Phi$.
Since there are only finitely many $n\in A$ with $X=N_{s_n}$ by assumption, the latter implies that there are only finitely many $n\in A$ with $t\in\varrho'_n$.
To see the forward implication of the second equivalence, consider any finite $X\subset V(G)$.
For each $\varrho_n$ we have that $N_{s_n}$ meets~$V_{\ell_n}$ where $\ell_n$ is the top of~$\varrho$ in~$\varrho_n$, but $N_{s_n}$ avoids $V_\ell$ for all other tops $\ell$ of~$\varrho$, because $\dot{G}$ is a $T$-graph.
Hence $N_{s_n}=X$ for some $n\in A$ implies that only those $\varrho_m$ with $\ell_m=\ell_n$ can possibly satisfy $N_{s_m}=X$.
Therefore, we have $N_{s_m}=X$ if and only if $v(\ell_n,X)\in\varrho'_m$, and by assumption there are only finitely many such $m$ as desired.
%follows by construction of $T'$ \blue{Max: I am somewhat worried that the $\ell$ isn't mentioned explicitly in the third line; It is probably fine, there is a unique maximal element in any $X$ of the form $N_{s_n}$, which will be this $\ell$?! In any case, put in the details instead of just saying ``by construction"}, and . 

Similarly, setting $B= \N \setminus A$, we get 
\begin{align*}
  & \eps'_n \to \eps'_\star \in \Omega(G') \; \text{as } n \to \infty \text{ for } n \in B \\
   \Leftrightarrow \; & 
   |B| = \infty \text{ and for every successor } t\in\varrho'_\star \text{ there are only finitely many } n\in B \text{ with } \varrho'_\star \cap\varrho'_n\subset\mathring{\dc{t}}_{T'} \\
  \Leftrightarrow \; & 
   |B| = \infty \text{ and for every successor } t\in\varrho_\star \text{ there are only finitely many } n\in B \text{ with } \varrho_\star\cap\varrho_n\subset\mathring{\dc{t}}_T \\
   \Leftrightarrow \; & \eps_n \to \eps_\star \in \Omega(G) \; \text{as } n \to \infty \text{ for } n \in B,
\end{align*}
where the first equivalence is \cref{lem_highraytop}~(ii), the second is evident by the properties of~$\Phi$, and the third is \cref{lem_WhyFFisUseful}~(ii). Together, we have $\eps'_n \to \eps'_\star$ in $\Omega(G')$ if and only if $\eps_n \to \eps_\star$ in $\Omega(G)$ as desired.
\end{proof}

\section{Applications}
\label{sec_8}

\subsection[Applications of the structure theorem]{Applications of \cref{thm:PartitionTree}}

Recall that a subgraph $H$ of $G$ is \emph{end-faithful} if mapping every end of $H$ to the end of $G$ including it defines a bijection between the ends of $H$ and~$G$.
Halin conjectured that every connected graph has an end-faithful spanning tree as a subgraph.
Halin confirmed his conjecture for all countable graphs, and Polat confirmed it for all graphs without a subdivision of the $\aleph_1$-regular tree~\cite{polat1997end}, see also~\cite{NormallyTraceable}.
However, Halin's conjecture was refuted in the early 1990's, independently by Seymour and Thomas~\cite{seymour1991end} and by Thomassen \cite{thomassen1992infinite}.

Carmesin~\cite{carmesin2014all} showed that Halin's conjecture becomes true if one asks the spanning tree only to be faithful to ends of a specific type: those that appear as points at infinity in the Freudenthal boundary.
This raises the question whether there is another way to amend Halin's conjecture, one that works for the full array of ends of graphs.

Using \cref{thm:PartitionTree}, we provide a positive answer: 
we show that Halin's end-faithful spanning tree conjecture becomes true if we relax `trees' to `$T$-graphs' and `subgraph' to `contraction-minor', but keep all ends.
Let $H$ be a minor of $G$ with branch sets $V_h$ ($h\in V(H)$).
A ray $h_0 h_1\ldots$ in $H$ \emph{tracks} a ray $v_0 v_1\ldots$ in $G$ if $v_{n_k}\in V_{h_k}$ for some subsequence $(v_{n_k})$ of $(v_n)$.
By standard arguments, for every end $\eps$ of $H$ there exists a unique end $\hat\eps$ of $G$ such that every ray in $\eps$ tracks a ray in~$\hat\eps$.
We say that $H$ is an \emph{end-faithful} minor of~$G$ if the map $\eps\mapsto\hat\eps$ is a bijection $\Omega(H)\to\Omega(G)$.
And we say that $H$ is a \emph{topologically end-faithful} minor of $G$ if the map $\eps\mapsto\hat\eps$ is a homeomorphism $\Omega(H)\to\Omega(G)$.

\begin{corollary}\label{main_Halin}
    Every connected graph contains a $T$-graph of finite adhesion as an end-faithful contraction-minor for some semi-special order-tree~$T$.
\end{corollary}

\begin{proof}%[Proof of \cref{main_Halin}]
    Let $G$ be a connected graph; we are to show that $G$ contains a $T$-graph of finite adhesion as an end-faithful contraction minor for some semi-special order tree~$T$.

    By \cref{thm:PartitionTree} we find a \ff\ \pt\ $(T,\cV)$ of~$G$ that displays all the ends of~$G$.
    Let $\dot G=G/\cV$ be the contraction minor of $G$ with vertex-set $T$ and branch sets $V_t$ ($t\in T$).
    Then $\dot G$ is a $T$-graph by \ref{pt1}, and it has finite adhesion (as pointed out below \cref{def_partition_tree}).
    Hence $T$ is semi-special by \cref{prop_specialtrees}.

    To see that the contraction minor $\dot G$ of~$G$ is end-faithful, we consider the usual map $\sigma\colon\Omega(\dot G)\to\HR(T)\sqcup T$, and let $\dot\eps$ be any end of~$\dot G$.
    As $\dot G$ is a $T$-graph of finite adhesion, $\sigma$ restricts to a bijection $\Omega(\dot G)\to\HR(T)$ by \cref{SigmaBijection}.
    Let $\varrho:=\sigma(\dot\eps)$ be the high-ray of $T$ that $\dot\eps$ corresponds to.
    We choose a cofinal $\omega$-chain $t_0<t_1<\cdots$ in $\varrho$ arbitrarily.
    By \cref{lem_Tgraphproperties}~\ref{itemT4}, there is a ray $\dot R\subseteq \dot G[\varrho]$ that traverses the nodes $t_i$ in increasing order.
    Then $\dot R\in \dot\eps$ as the end of $\dot R$ corresponds to $\varrho$ (as $\dot\eps$ does) and $\sigma$ is injective.
    By \cref{lem:findRayForHighRay}, there is a ray $R\subseteq G[V_\varrho]$ that is tracked by~$\dot R$ and which corresponds to~$\varrho$.
    Hence $\tau(\eps)=\varrho$ for the end $\eps\in\Omega(G)$ that contains~$R$, where $\tau\colon\Omega(G)\to\HR(T)\sqcup T$.
    In total, $\dot R\in\dot\eps$ tracks $R\in\eps$ and $\eps\in\tau^{-1}(\sigma(\dot\eps))$.
    As $(T,\cV)$ displays all ends of~$G$, the map $\tau$ restricts to a bijection $\Omega(G)\to\HR(T)$, and we saw above that $\sigma\colon\Omega(\dot G)\to\HR(T)$ is bijective, so $\tau^{-1}\circ\sigma\colon\Omega(\dot G)\to\Omega(G)$ is bijective. Therefore, $\dot G$ is an end-faithful minor of $G$.
\end{proof}

\begin{prob}
\label{Halin_open}
   Does every connected graph $G$ contain $T$-graph $H$ as a topologically end-faithful minor for some (semi-)special order-tree~$T$?
\end{prob}

A \emph{separation} of a graph $G$ is an unordered pair $\{A,B\}$ such that $A\cup B=V(G)$ and there is no edge in $G$ between $A\setminus B$ and $B\setminus A$.
We refer to $A\cap B$ as the \emph{separator} of $\{A,B\}$.
The cardinal $|A\cap B|$ is the \emph{order} of~$\{A,B\}$.
If $\{A,B\}$ has finite order, then $\{\Abs{A},\Abs{B}\}$ is a clopen bipartition of $\Omega(G)$.
Two separations $\{A,B\}$ and $\{C,D\}$ of $G$ are \emph{nested} if, possibly after exchanging the name $A$ with $B$ or $C$ with~$D$, we have $A\subset C$ and $B\supseteq D$.
Let $M$ be a set of separations of~$G$.
We say that $M$ is \emph{nested} if its elements are pairwise nested.
We say that $M$ \emph{distinguishes} two ends $\eps_1,\eps_2$ of $G$ if there is a finite-order separation $\{A,B\}\in M$ with $\eps_1\in\Abs{A}$ and $\eps_2\in\Abs{B}$ or vice versa.

\cref{thm:PartitionTree} implies the following deep result by Carmesin~\cite[5.17]{carmesin2014all} which he proved on 30 pages.

\begin{corollary}
  \label{thm_carmesin}
Every graph $G$ has a nested set $M$ of finite-order separations such that $M$ distinguishes every two ends of~$G$.
\end{corollary}

\begin{proof}
Without loss of generality, $G$ is connected.
By \cref{thm:PartitionTree}, $G$ has a \ff\ \pt\ $(T,\cV)$.
For every successor $t\in T$ we let
\[
    S_t:=N_G(\,V_{\uc{t}}\,)\quad\text{and}\quad A_t:=V_{\uc{t}}\cup S_t\quad \text{and}\quad B_t:=V\rest (T\setminus\uc{t}).
    %\Omega_t:=\Omega\big(\,N(\,\uc{t}\,),\,G[\,\uc{t}\,]\,\big).
\]
Then $\{A_t,B_t\}$ is a finite-order separation of~$G$ with separator equal to~$S_t$ by~\ref{pt2}.

We claim that the set $M$ of all these finite order separations $\{A_t,B_t\}$ is nested.
If $t<t'$ then $A_{t'}\subseteq A_t$, and so $B_{t'}\supseteq B_t$.
If $t$ and $t'$ are incomparable, then $A_t\subseteq B_{t'}$ and $B_t\supseteq A_{t'}$.
Hence $M$ is nested.

Finally, we show that $M$ distinguishes every two ends of~$G$.
For each $\{A_t,B_t\}$ let $\Omega_t:=\Abs{A_t}$.
Since $(T,\cV)$ is \ff , the map $\tau$ is a bijection between $\Omega(G)$ and~$\HR(T)$.
Each set $\Omega_t$ consists precisely of those ends $\eps$ of~$G$ whose corresponding high-ray $\tau(\eps)\subset T$ includes $\dc{t}$ as an initial segment.
Now let $\eps_1,\eps_2$ be two distinct ends of~$G$.
Without loss of generality, $\tau(\eps_1)$ is not included in~$\tau(\eps_2)$.
Let $t'$ be the least element of~$\tau(\eps_1)\setminus\tau(\eps_2)$, and let $t$ be the successor of~$t'$ in the high-ray~$\tau(\eps_1)$.
Then $\eps_1\in\Omega_t$ while $\eps_2\notin\Omega_t$, so $\{A_t,B_t\}$ distinguishes $\eps_1$ and~$\eps_2$.
\end{proof}

\subsection{Applications of \cref{thm_rep} assertion (2)}

\begin{corollary}
\label{cor_main_1}
Forbidding uncountable clique minors does not reduce the complexity of end spaces, i.e., every end space is homeomorphic to the end space of a graph without an uncountable clique minor.
\end{corollary}
\begin{proof}
 This follows from \cref{thm_rep}~(2) by the fact that uniform graphs on special order trees do not contain uncountable clique minors, \cref{cor_unctblclique}. 
\end{proof}
In contrast, forbidding a subdivision of a countably infinite clique gives a normal spanning tree by a result of Halin \cite[Theorem~10.1]{halin1978simplicial}, and so in this  case the end space is metrizable.

The \emph{degree}
of an end is the supremum of the sizes of collections of pairwise disjoint rays in it; Halin~\cite{H65} showed that this supremum is always attained. %, see also  \cite[Theorem~8.2.5]{Bible}.

\begin{corollary}
\label{cor_main_2}
Forbidding ends of uncountable degree does not reduce the complexity of end spaces, i.e., every end space is homeomorphic to the end space of a graph in which every end has countable degree.
\end{corollary}
\begin{proof}
  This follows from \cref{thm_rep}~(2) by the fact that all ends of uniform graphs on order trees have just countable degree, \cref{cor_ctbldegree}.
\end{proof}

\subsection{Applications of \cref{thm_rep} assertion (3)}

A \emph{discrete expansion of length $\sigma$} of a topological space $X$ is an increasing sequence $(X_i \colon i < \sigma)$ of non-empty closed subsets of $X$ such that
\begin{itemize}
    \item $X = \bigcup_{i < \sigma} X_i$,
    \item $X_0$ and $X_{i +1} \setminus X_i$ are discrete for all $i+1<\sigma$, and
    \item $X_\ell = \closure{\bigcup_{i < \ell} X_i}$ for all limits $\ell < \sigma$.
\end{itemize}
It is trivial that every Hausdorff space $X$ has a discrete expansion of length $|X|$, see \cite[Remark~7.3]{polat1996ends2}. 
The following remarkable theorem that end spaces have ``short" expansions is a deep result by Polat \cite[Theorem~8.4]{polat1996ends2} which he proved on twenty pages.  %Once again, this theorem is now a direct consequence of \cref{thm:EndspaceTgraph}.

\begin{corollary}
Every end space admits a discrete expansion of length at most $\omega_1$.
\end{corollary}

\begin{proof}
By \cref{thm:EndspaceTgraph}(3) it suffices to show the assertion for ray spaces $\cR(T)$ of special order trees $T$. 
For every limit $\ell < \omega_1$ let $X_\ell \subset \cR(T)$ be the set of high-rays that belong to $T^{<\ell}$. Since each high-ray of $T$ has countable order type, 
%each $\Omega_\ell$ is closed, and 
$\cR(T) = \bigcup_\ell X_\ell$ is an increasing cover of closed sets.
To get a discrete development, for every limit $\ell < \omega_1$ and every $n \in \N$ we now define a set $X_{\ell + n}$ with $X_{\ell} \subset X_{\ell + n} \subset X_{\ell + \omega}$ as follows:
For every node $t$ of $T^{\ell + n}$ choose, if possible, one high-ray $\varrho_t \in X_{\ell+\omega}$ with $t \in \varrho_t$ (equivalently: $\varrho_t \in [t]$ in the language of \cref{lem_standardbase}).
Let $X_{\ell + n}$ consist of all high-rays in the previous set $X_{\ell + n -1}$ together with all the chosen~$\varrho_t$. 

Then 
$ (X_i \colon i < \omega_1) $
is the desired discrete development. Indeed, all sets are closed by construction, $X_{i+1} \setminus X_i$ is discrete by construction, and $X_\ell = \closure{\bigcup_{i < \ell} X_i}$ for all limits $\ell < \omega_1$.
\end{proof}

\subsection{Acknowledgement.} We thank Ruben Melcher for fruitful discussions on the topic of end spaces which have led to the statement of \cref{lem_concentratedconvergence}.

\bibliographystyle{abbrv}
\bibliography{TreeBib}
\end{document}